\declaretheorem[numberwithin=section]{theorem}
\declaretheorem[sibling=theorem]{proposition}
\declaretheorem[sibling=theorem]{definition}
\declaretheorem[sibling=theorem]{lemma}
\declaretheorem[sibling=theorem]{notation}
\declaretheorem[sibling=theorem,style=remark]{remark}
\declaretheorem[sibling=theorem,style=remark]{example}
\numberwithin{equation}{section}
\def\R{\mathbb R}
\def\C{\mathbb C}
\def\N{\mathbb N}
\def\E{\mathbb E}
\def\P{\mathbb P}
\def\bbH{\mathbb H}
\def\1{\mathbbm{1}}
\def\tensor{\otimes}
\def\M{\mathbb M}
\def\u{\mathfrak{u}}
\def\B{\mathscr{B}}
\def\A{\mathscr{A}}
\def\adots{
	\mathinner{\mkern1mu\raise1pt\hbox{.}\mkern2mu\raise4pt\hbox{.}
		\mkern2mu\raise7pt\vbox{\kern7pt\hbox{.}}\mkern1mu}}
\newcommand{\tr}{\mathrm{tr}}
\long\def\symbolfootnote[#1]#2{\begingroup%
\def\thefootnote{\fnsymbol{footnote}}\footnote[#1]{#2}\endgroup}
\begin{document}

\title{A Local Limit Theorem and Delocalization of Eigenvectors for Polynomials in Two Matrices}
\author{
Ching-Wei Ho\\
Department of Mathematics \\
Indiana University \\
Bloomington, IN 47405 \\
\texttt{cwho@iu.edu}
}

\date{\today} 

\maketitle

\begin{abstract}
We propose a boundary regularity condition for the $M_n(\C)$-valued subordination functions in free probability to prove a local limit theorem and delocalization of eigenvectors for self-adjoint polynomials in two random matrices. We prove this through estimating the pair of $M_n(\C)$-valued approximate subordination functions for the sum of two $M_n(\C)$-valued random matrices $\gamma_1\tensor C_N+\gamma_2\tensor U_N^*D_NU_N$, where $C_N$, $D_N$ are deterministic diagonal matrices, and $U_N$ is Haar unitary.
\end{abstract}

\tableofcontents

\section{Introduction}

Let $\{C_N\}_{N\in\N}$ and $\{D_N\}_{N\in\N}$ be deterministic self-adjoint diagonal $N\times N$ matrices. Suppose that $\{U_N\}_{N\in\N}$ is a sequence of $N\times N$ random unitary matrices distributed uniformly in $U(N)$. We consider random matrices $c_N=C_N$ and $d_N = U_N^*D_N U_N$. The law of $d_N$ is independent of unitary conjugation. Suppose that there are free random variables $c$, $d$ such that $c_N\to c$ and $d_N\to d$ in distribution.  

Suppose that $P$ is a self-adjoint polynomial in two noncommuting indeterminates. Our purpose is to study the density of the eigenvalues of the random matrix $X_N = P(c_N, d_N)$. In other words, given $x\in \R$ and a sequence $(\eta_N)_{N\in\N}$ that converge to $0$, we consider the ratio
$$\frac{M_{\eta_N}(x)}{2N\eta_N}$$
where $M_{\eta_N}(x)$ denotes the (random) number of eigenvalues of $X_N$ in the interval $[x-\eta_N, x+\eta_N]$. We provide technical conditions under which the limit
$$\lim_{N\to\infty}\frac{M_{\eta_N}(x)}{2N\eta_N} = \rho(x)$$
exists in probability, and the convergence is uniform for all $x$ in some closed interval $I$. The sequence $\eta_N$ is of the form $\eta_N = (cN^{-\frac{1}{12}}\log N)^\alpha$ where $c$ is a large constant and $\alpha\in(0,1)$.

Under the same technical conditions, we prove a delocalization result for the eigenvectors of $X_N$ corresponding to eigenvalues in the interval $I$. More precisely, the eigenvectors $v_1^{(N)},\ldots,v_{k_N}^{(N)}$ with the corresponding eigenvalue in $I$ satisfy
$$\P\left(\max_{j=1,\cdots k_N}\max_{i=1,\ldots,N}|v_j^{(N)}(i)|^2>N^{-\frac{\alpha}{12}}\log N\right)\leq \exp(-N^{-\delta})$$
for all sufficiently large $N$, for some $\delta>0$. Here $v_a^{(N)}(i)$ denotes the $i$-coordinate of the vector $v_a^{(N)}$.

The technical conditions we require concern regularity of a certain matrix-valued subordination function whose existence is proved in free probability theory. Our methods extend work done by Kargin \cite{Kargin2015} for the random matrix $c_N+d_N$. If $p(c_N, d_N) = c_N+d_N$ and $I$ is contained in the bulk of the eigenvalues of $X_N$ then the technical conditions are equivalent to the ones proposed in \cite{Kargin2015} and are known to be true. For general self-adjoint $p$, the technical conditions are believed to be true when the closed interval $I$ considered is inside the interior of the bulk of the $X_N$.

There have been further developments in local limit theorems regarding the model $c_N+d_N$. Bao, Erd\"{o}s and Schnelli \cite{BAO2017251, Bao2017} improved the length of the interval to the optimal scale of $N^{-1+\varepsilon}$. Discussions of the local law at the edge can be found in \cite{Bao2017SpectralRF}. Erd\"{o}s, Kr\"{u}ger and Nemish \cite{EKN} studied the local laws for polynomials of Wigner matrices.

 
 It is well-known that the eigenvalue distributions of general Wigner matrices converge weakly to the semi-circle law; this is a macroscopic phenomenon of the asymptotic behavior of the eigenvalue distributions. On a microscopic scale, Erd\"os, Schlein, and Yau \cite{ESY2009} proved a local limit theorem for general Wigner matrices, for which the number of eigenvalues in an interval of length $N^{-\frac{2}{3}}\log N$ around a point $x\in \R$ is concerned. More precisely, they prove that if $\eta^*\geq C N^{-2/3}\log N$ and $\kappa>0$, then
 $$\P\left\{\sup_{|x|\leq 2-\kappa}\left|\frac{M_{\eta^*}(x)}{2N\eta^*}-\rho_{\textrm{sc}}(x)\right|\geq\varepsilon\right\}\to 0$$
 for any $\varepsilon>0$ and sufficiently large $N$, where $\rho_{\textrm{sc}}$ is the density of the semicircle law and $M_{\eta^*}(x)$ is the number of eigenvalues of the $N\times N$ Wigner matrices in the interval $[x-\eta^*, x+\eta^*]$. They indeed showed that the decay of the probability is exponential. Note that the supremum inside the probability is taken over $[-2+\kappa, 2-\kappa]$, which does not include the boundary $\{-2, 2\}$ of the support.
 
 In addition to the eigenvalue distribution in the macroscopic and microscopic scales, one can investigate the eigenvector behaviors of the random matrix. Erd\"os, Schlein, and Yau \cite{ESY2009} proved that no eigenvector is strongly localized for a general Wigner matrix. More precisely, they proved that, given any small enough $\eta>0$ and any integer $L\geq 1$, the probability of that there is a normalized eigenvector $v$ such that the sum over the squares of any $N-L$ coordinates is bounded by $\eta$ is less than $e^{-cN}$, for some $c>0$, for all sufficiently large $N$.

Voiculescu \cite{Voiculescu1991} discovered that free probability can be used to study the limit of empirical eigenvalue distributions of random matrices. Convergence of moments and Cauchy transforms have been the main tools to study the large-$N$ limit behavior of Hermitian random matrices. Collins and Male \cite{CM2014} proved that if $c_N$ and $d_N$ are random matrices with strong free limits $c$ and $d$ (in the sense that no eigenvalue of $c_N$ (resp. $d_N$) string from the spectrum of $c$ (resp. $d$)), one of them has law independent of unitary conjugation, then, given any polynomial $P$, the sequence $P(c_N, d_N)$ converges strongly in distribution to the $P(c,d)$. In particular, the empirical eigenvalue distributions of random matrices $c_N+d_N$ converges strongly in distribution to the free additive convolution $\mu_c\boxplus \mu_d$ where $\mu_a$ denotes the law of the random variable $a$.



In order to understand properties of $P(c_N, d_N)$ for a general self-adjoint noncommutative polynomial $P$, we use the linearization technique to transform the nonlinear polynomial into a linear polynomial with matrix coefficients. We isolate some regularity properties of the subordination functions that allows us to estimate local behaviors of the empirical eigenvalue distributions of $P(c_N, d_N)$. The idea of the regularity assumption comes from the very smooth plots of the limiting distribution of $P(c,d)$, where $c$, $d$ are freely independent semicircular random variables, drawn in Belinschi, Mai and Speicher's paper \cite{BMS2013}. They plotted the graphs (\cite[Theorem 4.1]{BMS2013}) by first taking a linearization of the underlying polynomial, computing the (approximate) subordination functions as the $M_n(\C)$-valued Denjoy-Wolff point by iterations of a function at $\beta = z e_{1,1}-\gamma_0+i\eta$, where $\eta>0$ and $\Im z>0$ are small. This work suggests that the boundary of the $M_n(\C)$-valued subordination functions from the linearization is highly regular.

This paper is organized as follows. In Section 2, we include some background on free probability, operator-valued probability, and linearization. The statements of the existence of the subordination functions and the procedure of linearization to a polynomial in noncommutating variables can be found here. We also include the statement of Newton's method for functions defined on a Banach space. In Section 3, we estimate the approximate subordination functions of the $M_n(\C)$-valued unitarily invariant random matrix model; the estimates are done through Newton's method. We also show how to make use of the estimates to prove the local limit theorem and delocalization of eigenvectors for polynomials.



\section{Preliminary}
\subsection{Free Probability}
\begin{definition}
	\begin{enumerate}
		\item A $W^*$-probability space is a pair $(\A, \tau)$ where $\A$ is a von Neumann algebra and $\tau$ is a normal, faithful tracial state on $\A$. The elements in $\A$ are called (noncommuntative) random variables. 
		\item The $\ast$ - subalgebras $\A_1, \cdots \A_n\subseteq \A$ are said to be free or freely independent if, given any $i_1, i_2,\cdots, i_m\in\{1,\cdots,n\}$ with $i_k\not= i_{k+1}$, $a_{i_j}\in\A_{i_j}$ are centered, then we also have $\tau(a_{i_1}a_{i_2}\cdots a_{i_m})=0$. The random variables $a_1,\cdots, a_m$ are free or freely independent if the $\ast$-subalgebras they generate are free.
		\item For a self-adjoint element $a\in\A$, the law or distribution $\mu$ of $a$ is a probability measure on $\R$ such that whenever $f$ is a bounded continuous function, we have
		$$\int_\R f\;d\mu = \tau(f(a)).$$
	\end{enumerate}
\end{definition}

Recall that the Cauchy transform of the law $\mu$ of $a$ on the real line is given by
$$G_\mu(z) := \int_\R\frac{1}{z-t}\:\mu(dt) = \tau((z-a)^{-1})$$
for $z\not\in \textrm{supp}\:\mu$. The Cauchy transform can be defined for any finite Borel measure; however, we only use it for probability measures. The transform $G_\mu$ maps the upper half plane $\bbH^+(\C)$ into the lower half plane $\bbH^-(\C)$, and $\lim_{y\uparrow +\infty}iyG_\mu(iy) = \mu(\R)$. More results of Cauchy transform can be found in \cite{AkhiezerBook}.

The measure $\mu$ can be recovered from $G_\mu$ using the Stieltjes inversion formula, that expresses $\mu$ as a weak limit
$$\mu(dx) = \lim_{y\downarrow 0} \frac{-1}{\pi}\Im\:G_\mu(x+iy)\:dx$$
The absolutely continuous part of $\mu$ relative to Lebesgue measure is given by
$$\frac{d\mu}{dt}(x) = \lim_{y\downarrow 0} \frac{-1}{\pi}\Im\:G_\mu(x+iy)$$
and almost everywhere relative to the singular part of $\mu$,
$$\lim_{y\downarrow 0} \frac{-1}{\pi}\Im\:G_\mu(x+iy) = +\infty.$$

\subsubsection{Strong Convergence}
\begin{definition}
	Let ${\bf a} = (a_1,\ldots,a_k)$ be a $k$-tuple of random variables in the $W^*$-probability space $(\A,\tau)$. The joint distribution is the linear form $P\mapsto\tau[P({\bf a}, {\bf a}^*)]$ for all noncommutative polynomials with $2k$ noncommutative indeterminates.
	\begin{enumerate}
		\item We say that the $k$-tuples ${\bf a}_N = (a_1^{(N)},\ldots,a_k^{(N)})$, $N\in \N$, in $W^*$-probability spaces $(\A_N, \tau_N)$ converge in distribution to ${\bf a}$ if 
		$$\tau_N[P({\bf a}_N,{\bf a}_N^*)]\to\tau[P({\bf a},{\bf a}^*)]$$
		for every noncommutative polynomial $P$ in $2k$ noncommuting indeterminates.
		\item We say that the $k$-tuples ${\bf a}_N$ converge strongly in distribution if, in addition to the convergence in distribution, we also have
		$$\|P({\bf a}_N,{\bf a}_N^*)\|\to\|P({\bf a}, {\bf a}^*)\|$$
		for all noncommutative polynomials $P$ in $2k$ noncommuting indeterminates.
		\end{enumerate}
	\end{definition}
Haagerup and Thorbj\o rnsen \cite{HT2005} proved the strong asymptotic freeness of independent GUE matrices, which are $N\times N$ Hermitian matrices with independent complex Gaussian entries with variance $\frac{1}{N}$ in the upper triangular part. Thus, the theorem proved in \cite{HT2005} is an example of strong convergence.
\begin{example}[\cite{HT2005}]
	For any integer $N\geq 1$, let $X_1^{(N)}, X_2^{(N)},\ldots, X_p^{(N)}$ be $N\times N$ independent $GUE$ matrices and let $(x_1, \ldots x_p)$ be a semicircular system in a $W^*$-probability space with faithful state. Then, almost surely, for all polynomials $P$ in $p$ noncommuting indeterminates, one has
	$$\|P(X_1^{(N)},\ldots, X_p^{(N)})\|\to\|P(x_1,\ldots, x_p)\|$$
	as $N\to \infty$, where $\|\cdot\|$ are the matrix operator norm on the left hand side and the $C^*$-algebra norm on the right hand side.
	\end{example}
The above example has been generalized to other matrix models; see \cite{Anderson2013, CD2007, Male2012, Schultz2005}.

By Haar unitary matrices we mean random matrices distributed according to the Haar measure on the unitary group. A non-commutative random variable $u\in \A$ is called a Haar unitary if it is unitary and $\tau[u^n] = \delta_{n0}$. Collins and Male \cite{CM2014} proved a strong limit in distribution of independent Haar unitary matrices and (possibly random) matrices that are independent of the Haar unitary. 
\begin{proposition}[Proposition 2.1, \cite{CM2014}]
	Let ${\bf x}_N = (x_1^{(N)},\ldots,x_p^{(N)})$ and ${\bf x} = (x_1,\ldots,x_p)$ be $p$-tuples of variables in $C^*$-probability spaces $(\A_N,\tau_N)$ and $(\A,\tau)$ with faithful states. Then, the followings are equivalent:
	\begin{enumerate}
		\item ${\bf x}_N$ converges strongly in distribution to ${\bf x}$.
		\item for any continuous map $f_i, g_i:\R\to\C$, $i=1,\ldots,p$, the family of variables $(f_i(\Re x_i^{(N)}), g_i(\Im x_i^{(N)}))$ converges strongly in distribution to $(f_i(\Re x_i), g_i(\Im x_i))$.
		\item for any self-adjoint variable $h_N = P({\bf x}_N)$, where $P$ is a fixed non-commutative polynomials, $\mu_{h_N}$ converges weakly to $\mu_h$, where $h = P({\bf x})$. Moreover, the support of $\mu_{h_N}$ converges to the support of $\mu_h$ in the Hausdorff distance.
		\end{enumerate}
	In particular, the strong convergence in distribution of a single self-adjoint variable is its convergence in distribution together with the Hausdorff convergence of its spectrum.
	\end{proposition}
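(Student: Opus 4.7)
The plan is to establish the three-way equivalence via separate proofs of $(1)\Leftrightarrow(2)$ and $(1)\Leftrightarrow(3)$. Two preliminary facts will be used throughout: (a) strong convergence forces $\sup_N\|x_i^{(N)}\|<\infty$, so all spectra live in a common compact interval $I\subset\R$; and (b) any continuous function $\R\to\C$ can be uniformly approximated on $I$ by polynomials via Stone--Weierstrass, so continuous functional calculus $f\mapsto f(a)$ is compatible with such polynomial approximations.

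For $(1)\Rightarrow(2)$, given continuous $f_i, g_i$ and $\epsilon>0$, I approximate each by polynomials $p_i, q_i$ to accuracy $\epsilon$ uniformly on $I$. Since $\Re x_i = (x_i+x_i^*)/2$ and $\Im x_i = (x_i-x_i^*)/(2i)$, any noncommutative polynomial $R$ in the variables $p_i(\Re x_i^{(N)})$ and $q_j(\Im x_j^{(N)})$ is a noncommutative polynomial in $x_i^{(N)}, (x_i^{(N)})^*$, so $(1)$ yields strong convergence of the polynomial-approximated family; continuous functional calculus then propagates the $\epsilon$-approximation to $f_i, g_i$. The converse $(2)\Rightarrow(1)$ is almost immediate: take $f_i(t)=g_i(t)=t$, which gives strong convergence of $(\Re x_i^{(N)}, \Im x_i^{(N)})$, and reassemble $x_i^{(N)} = \Re x_i^{(N)} + i\Im x_i^{(N)}$, rewriting any noncommutative polynomial in $x_i, x_i^*$ as one in real and imaginary parts.

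For $(1)\Rightarrow(3)$, convergence in distribution gives convergence of $\tau_N[h_N^k]$ for every $k$, and with uniformly bounded supports this yields weak convergence $\mu_{h_N}\to\mu_h$. The Hausdorff convergence of supports splits into two inclusions. First, $\mathrm{supp}(\mu_h)\subseteq\liminf_N\mathrm{supp}(\mu_{h_N})$: if $\lambda\in\mathrm{supp}(\mu_h)$ then $\mu_h(U)>0$ for any open $U\ni\lambda$, and weak convergence forces $\mathrm{supp}(\mu_{h_N})\cap U\neq\emptyset$ for large $N$. Second, $\limsup_N\mathrm{supp}(\mu_{h_N})\subseteq\mathrm{supp}(\mu_h)$: for $\lambda\notin\mathrm{supp}(\mu_h)$, pick $f\in C(I)$ with $f(\lambda)=1$ and $f\equiv 0$ on $\mathrm{supp}(\mu_h)$; upgrade polynomial norm convergence to $\|f(h_N)\|\to\|f(h)\|=0$ by Stone--Weierstrass, and use $\|f(h_N)\|\geq\sup_{t\in\mathrm{supp}(\mu_{h_N})}|f(t)|$ to force $\mathrm{supp}(\mu_{h_N})$ away from $\lambda$. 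For $(3)\Rightarrow(1)$: convergence in distribution follows by decomposing any polynomial $Q = H_1 + iH_2$ into self-adjoint parts and applying weak convergence of $\mu_{H_j(x_N)}$ on uniformly bounded supports (coming from Hausdorff convergence); norm convergence reduces via $\|Q(x_N)\|^2 = \|Q^*Q(x_N)\| = \max\mathrm{supp}(\mu_{Q^*Q(x_N)})$ and Hausdorff convergence applied to the self-adjoint polynomial $Q^*Q$.

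The main obstacle I anticipate is the $\limsup$ inclusion in the Hausdorff convergence of spectra under $(1)\Rightarrow(3)$: one must rule out isolated spectral points of $h_N$ drifting outside $\mathrm{supp}(\mu_h)$, and this requires carefully upgrading norm convergence from polynomials to continuous functions via functional calculus, uniformly over the common compact interval $I$. Once that technical step is clean, the remaining implications are essentially bookkeeping with Stone--Weierstrass and the identities $\Re x = (x+x^*)/2$, $\Im x = (x-x^*)/(2i)$. The final sentence of the proposition then specializes to a single self-adjoint variable, where the weak convergence is the usual convergence in distribution and the Hausdorff piece is precisely what is added by the strong qualifier.
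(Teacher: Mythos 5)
The paper does not prove this proposition at all---it is quoted verbatim from Collins and Male \cite{CM2014}---so there is no internal argument to compare against; your proposal is essentially the standard proof given in that reference (Stone--Weierstrass approximation for $(1)\Leftrightarrow(2)$, moment convergence plus the two support inclusions for $(1)\Leftrightarrow(3)$, and the decomposition $Q=H_1+iH_2$, $\|Q\|^2=\|Q^*Q\|$ for $(3)\Rightarrow(1)$), and it is correct. The one step worth making explicit is that faithfulness of the states is what identifies $\mathrm{supp}\,\mu_h$ with $\sigma(h)$, so that your test function $f$ vanishing on $\mathrm{supp}\,\mu_h$ really satisfies $\|f(h)\|=0$ in the $\limsup$ inclusion; without faithfulness that inclusion (and hence the Hausdorff statement) can fail.
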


The random matrices that we consider are $c_N=C_N$ and $d_N = U_N D_N U_N^*$ where $C_N$, $D_N$ are self-adjoint deterministic diagonal matrices and $U_N$ is Haar distributed on the unitary group $U(N)$. The more complicated model in which $C_N$ and $D_N$ are random diagonal matrices follows from applying Fubini's theorem to the laws of the eigenvalue distributions of $c_N$ and $D_N$. This model was also considered in, for example,  \cite{BBC2018, BBCF2017, Kargin2015}.

\subsection{Operator-valued Free Probability}
\label{freeprob}

Voiculescu \cite{Voiculescu1995} introduced the concept of $W^*$-operator-valued noncommutative probability space. This is a triple $(\A, \phi, B)$ where $\A$ is a von Neumann algebra, $B\subseteq\A$ is a von Neumann subalgebra of $\A$, and $\phi:\A\to B$ is a unit-preserving conditional expectation. For us, the relevant spaces are $(M_n(\C)\tensor\A, \textrm{id}\tensor \tau, M_n(\C))$ and $(M_n(\C)\tensor (L^\infty\tensor M_n(\C)), \textrm{id}\tensor \tr_n, M_n(\C))$, where $(\A, \tau)$ is a $W^*$-probability space and $\tr_n$ is the normalized trace on matrices.

When $x\in \A$, we denote by $B\langle x\rangle$ the algebra generated by $B$ and $x$. Freeness in the operator-valued case is defined as follows.
\begin{definition}
	Two algebras $\A_1, \A_2\subseteq \A$ containing $B$ are said to be free over $B$ if
	$$\phi[x_1x_2\cdots x_n]=0$$
	whenever $n\in \N$, $x_j\in \A_{i_j}$, $\phi[x_j]=0$ and $i_j\neq i_{j+1}$, for $j=1,\ldots, n-1$. Two random variables $x$, $y\in\A$ are said to be free over $B$ if $B\langle x\rangle$ and $B\langle y\rangle$ are free over $B$.
	\end{definition}

An operator-valued (or $B$-valued) random variable $x\in\A$, the distribution $\mu_x$ is the set of multilinear maps $m_n^x:B^{n-1}\to B$ given by
$$m_n^x(b_1,\ldots,b_{n-1})=\phi[x b_1 x b_2\cdots xb_{n-1}x],\quad b_1,\ldots,b_{n-1}\in B,$$
for all $n\in\N$. When $n=0$, $m_0^x$ is the constant $1\in \B$; when $n=1$, $m_1^x=\phi[x]$.

If $x,y$ are free over $B$, then the distribution $\mu_{x+y}$ of $x+y$ depends only on the distributions $\mu_x$ and $\mu_y$. We denote this distribution by $\mu_x\boxplus \mu_y$. 

We use the $\bbH^+(B)$ to denote the upper half plane of $B$
\[\bbH^+(B) = \left\{b\in B: \Im b=\frac{b-b^*}{2i}\in B\textrm{ is positive}\right\}.\]
Similarly, $\bbH^-(B) = \left\{b\in B: \Im b = \frac{b-b^*}{2i}\in B\textrm{ is negative}\right\}$ denotes the lower half plane of $B$. The operator-valued Cauchy transform of a variable $x\in\A$ is defined by
$$G_x(b) = \phi[(b-x)^{-1}]$$
for those $b\in B$ such that $b-x$ is invertible in $\A$. If $b\in \bbH^+(B)$, then $b-x$ is invertible and $G_x(b)\in\bbH^-(B)$. The operator-valued Cauchy transform and its fully matricial extension is a very powerful tool to study operator-valued distributions. See \cite{BPV2012, Voiculescu1995, Voiculescu2000} for more details.

Suppose $(\A, \phi, B)$ is a $W^*$-operator-valued non-commutative probability space and $x, y\in \A$ are freely independent over $B$. There are two powerful tools to study the distribution $\mu_x\boxplus\mu_y$ of $x+y$. The first one is the $\mathcal{R}$-transform, introduced by Voiculescu \cite{Voiculescu1986, Voiculescu1995}. It has been well-studied also in, for example, \cite{Dykema2006, Speicher1998}. The second one is the subordination function. Its existence was proved in the scalar case in \cite{Biane1998, Voiculescu1993}, and in the operator-valued case in \cite{BMS2013, Voiculescu2000}. 
The theorem is as follows.
\begin{theorem}
	\label{Subordination}
	Assume that $(\A, \phi, B)$ is a $W^*$-operator-valued noncommutative probability space and $x,y\in \A$ are two self-adjoint operator-valued random variables that are free over $B$. Then there exists a unique pair of Fr\'echet analytic maps $\omega_1, \omega_2: \bbH^+(B)\to\bbH^+(B)$ such that
	\begin{enumerate}
		\item $\Im\omega_j(b)\geq \Im b$ for all $b\in\bbH^+(B)$, $j=1, 2$;
		\item $G_x(\omega_1(b)) = G_y(\omega_2(b)) = (\omega_1(b)+\omega_2(b)-b)^{-1}$ for all $b\in \bbH^+(B)$.
		\item $G_x(\omega_1(b)) = G_y(\omega_2(b)) = G_{x+y}(b)$ for all $b\in\bbH^+(B)$.
		\end{enumerate}
	\end{theorem}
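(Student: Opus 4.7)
The plan is to establish existence, uniqueness, and analyticity of the pair $(\omega_1,\omega_2)$ via a Banach-space fixed-point argument, essentially the operator-valued adaptation of Biane's subordination construction due to Belinschi-Mai-Speicher and Voiculescu. Throughout, let $F_x(b)=G_x(b)^{-1}$ denote the reciprocal $B$-valued Cauchy transform and $h_x(b)=F_x(b)-b$ the associated Nevanlinna remainder; a standard $B$-valued Herglotz representation shows that $h_x,h_y:\bbH^+(B)\to\overline{\bbH^+(B)}$ are Fr\'echet analytic, with image bounded whenever $\|(\Im b)^{-1}\|$ is bounded.

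Fix $b\in \bbH^+(B)$ with $\Im b\geq \varepsilon\cdot 1$ for some $\varepsilon>0$, and define
$$T_b:\bbH^+(B)\times\bbH^+(B)\to\bbH^+(B)\times\bbH^+(B),\qquad T_b(\omega_1,\omega_2)=\bigl(b+h_y(\omega_2),\ b+h_x(\omega_1)\bigr).$$
Since $\Im h_x,\Im h_y\geq 0$, every image point satisfies $\Im T_b(\omega_1,\omega_2)\geq \varepsilon\cdot 1$, so after cutting down to a ball of large enough radius (stable under $T_b$ by the growth control on $h_x,h_y$), the map $T_b$ sends a bounded Fr\'echet-analytic subdomain strictly into itself. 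The Earle-Hamilton fixed-point theorem then provides a unique fixed point $(\omega_1(b),\omega_2(b))$ with Fr\'echet analytic dependence on $b$; letting $\varepsilon\downarrow 0$ and invoking uniqueness extends the construction to all of $\bbH^+(B)$.

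Conditions (1) and (2) are essentially immediate from the fixed-point identities. From $\omega_1=b+h_y(\omega_2)$ and $\Im h_y(\omega_2)\geq 0$ one gets $\Im\omega_1\geq \Im b$ (and symmetrically for $\omega_2$), proving (1). The fixed-point equations rearrange to
$$\omega_1+\omega_2-b \;=\; F_x(\omega_1) \;=\; F_y(\omega_2),$$
and inverting these yields $G_x(\omega_1)=G_y(\omega_2)=(\omega_1+\omega_2-b)^{-1}$, which is (2).

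The main obstacle, and the step that uses freeness in an essential way, is condition (3): identifying the common value with $G_{x+y}(b)$. The natural route is via the operator-valued $R$-transform $\mathcal{R}_x(w):=G_x^{\langle -1\rangle}(w)-w^{-1}$, which is Fr\'echet analytic in a neighborhood of $0\in B$ and, by Voiculescu's theorem, linearizes $B$-free convolution, so that $\mathcal{R}_{x+y}=\mathcal{R}_x+\mathcal{R}_y$. Setting $w=(\omega_1+\omega_2-b)^{-1}$, the identity $G_x(\omega_1)=w$ gives $\omega_1=\mathcal{R}_x(w)+w^{-1}$ and likewise $\omega_2=\mathcal{R}_y(w)+w^{-1}$, whence
$$w^{-1}\;=\;\omega_1+\omega_2-b \;=\; \mathcal{R}_x(w)+\mathcal{R}_y(w)+2w^{-1}-b,$$
so $b=\mathcal{R}_{x+y}(w)+w^{-1}=G_{x+y}^{\langle -1\rangle}(w)$, i.e.\ $G_{x+y}(b)=w$ as required. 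The delicacy is that $\mathcal{R}_j$ is only a priori defined for $w$ near $0\in B$, so this chain of identities must first be established for $b$ with $\Im b$ large (forcing $w$ small) and then propagated to all of $\bbH^+(B)$ by the analyticity of both sides on the connected domain $\bbH^+(B)$. Uniqueness of the pair on the full upper half plane then follows by combining the Earle-Hamilton uniqueness with the same analytic continuation argument.
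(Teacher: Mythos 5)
This theorem is stated in the paper as an imported result, cited from Belinschi--Mai--Speicher and Voiculescu, with no proof given, so there is no in-paper argument to compare against. Your outline correctly reconstructs the standard proof from those sources: the Earle--Hamilton fixed-point argument applied to $T_b$ on a bounded subdomain gives existence, uniqueness and Fr\'echet analyticity of the pair, and conditions (1)--(2) follow from the fixed-point identities exactly as you say. The one place where you diverge from the cited proof is condition (3): Belinschi--Mai--Speicher identify $G_x(\omega_1(b))=G_{x+y}(b)$ by matching their fixed point against Voiculescu's earlier subordination function, whereas you use operator-valued $\mathcal{R}$-transform additivity for $\Im b$ large together with the identity theorem on the connected domain $\bbH^+(B)$; both routes are standard and your version is valid.
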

	

\subsection{Linearization}
\label{Linear}
As in \cite{Anderson2013, BMS2013}, linearization is used to reduce a problem about a polynomial in several variables to an  addition of matrices with the variables in their entries. A \emph{linearization} of $P\in \C\langle X_1,\ldots, X_k\rangle$ is a linear polyonomial $L$ with matrix coefficients $\gamma_0,\ldots, \gamma_k$, 
$$L = \gamma_0\tensor 1+\gamma_1\tensor X_1+\ldots+\gamma_k\tensor X_k$$
satisfying the property that \emph{given $z\in\C$ and $a_1,\ldots,a_k$ in a $W^*$-probability space $\A$, $z-P(a_1,\ldots,a_k)$ is invertible in $\A$ if and only if $z e_{1,1}\tensor 1-L(a_1,\ldots,a_k)$ is invertible in $M_n(\A) = M_n(\C)\tensor \A$}, where $e_{1,1}$ is the matrix element with $1$ in the $(1,1)$-entry and $0$ elsewhere. Indeed it can be defined in a more general way -- choosing any $\alpha\in M_n(\C)$, with a different $L$, $z-P(a_1,\ldots,a_k)$ is invertible in $\A$ if and only if $z \alpha\tensor 1-L(a_1,\ldots,a_k)$ is invertible in $M_n(\A)$. Every polynomial possesses a linearization in this sense \cite{Schutzenberger1961}.

Now we describe the linearization process from \cite{Anderson2013, Mai2017}. The same procedure is also used in \cite{BBC2018} to study outliers of a polynomial in unitarily invariant random matrices. 

Given a polynomial $P\in\C\langle X_1,\ldots,X_k\rangle$, the linear polynomial $L\in M_n(\C\langle X_1,\ldots,X_k\rangle)$ is of the form
$$L=\begin{pmatrix}
0 & u\\v& Q
\end{pmatrix},$$
where $u\in M_{1,(n-1)}(\C\langle X_1,\ldots,X_k\rangle)$, $v\in M_{(n-1),1}(\C\langle X_1,\ldots,X_k\rangle)$ $Q\in M_{n-1}(\C\langle X_1,\ldots,X_k\rangle)$. The matrix $Q$ has to be invertible. The polynomial entries of $Q$, $u$ and $v$ all have degree less than or equal $1$. Moreover, $uQ^{-1}v=-P$. If $P$ is a self-adjoint polynomial, the coefficients $\gamma_i$ of $L$ can be chosen to be self-adjoint matrices.

If $P$ is a monomial of degree $0$ or $1$, we set $n=1$ and $L=P$. If $P = cX_{i_1}X_{i_2}\ldots X_{i_l}$, where $l\geq 2$ and $i_1,\ldots i_l\in \{1,\ldots,k\}$, then the matrix is of size $(l+1)\times (l+1)$ and 
$$L = \begin{pmatrix}
0& 0&\cdots&0 &0 &c\\
0& 0&\cdots& 0 & X_{i_1}&-1\\
0& 0&\cdots& X_{i_2} & -1&0\\
\vdots & \vdots&\adots& \vdots & \vdots&\vdots\\
0& X_{i_{l-1}}&\cdots& 0 & 0&0\\
X_{i_l}& -1&\cdots& 0 & 0&0\\
\end{pmatrix}$$

The lower right $l\times l$ matrix has an inverse of degree $l-1$ in $M_{l-1}(\C\langle X_1,\ldots, X_k\rangle)$ \cite{Mai2017}. Suppose now that $P = P_1+P_2$, where $P_i\in \C\langle X_1\ldots, X_k\rangle$ with linearization 
$$L_i =\begin{pmatrix}
0 & u_i\\v_i& Q_i
\end{pmatrix}\in M_{n_i}(\C\langle X_1,\ldots,X_k\rangle).$$
Then 
$$L = \begin{pmatrix}
0& u_1& u_2\\
v_1& Q_1& 0\\
v_2 & 0 & Q_2\\
\end{pmatrix}=\begin{pmatrix}
0 & u\\v& Q
\end{pmatrix}\in M_{n_1+n_2-1}(\C\langle X_1,\ldots,X_k\rangle)$$
is a linearization of $P$. Hence, every $P$ possesses a linearization.

If $P$ is a self-adjoint polynomial, then $P = P_0+P_0^*$ for some polynomial $P_0$. Suppose 
$$L_0=\begin{pmatrix}
0 & u_0\\v_0& Q_0
\end{pmatrix}$$
is a linearization of $P_0$, then
$$L_i =\begin{pmatrix}
0 & u_i\\v_i& Q_i
\end{pmatrix}\in M_{n_i}(\C\langle X_1,\ldots,X_k\rangle).$$
Then 
$$L = \begin{pmatrix}
0& u_0& v_0^*\\
u_0^*& 0& Q_0^*\\
v_0 & Q_0 & 0\\
\end{pmatrix}=\begin{pmatrix}
0 & u\\u^*& Q
\end{pmatrix}$$
is a self-adjoint linear polynomial for $P$. The constant term of $Q^{-1}$ has spectrum contained in $\{1,-1\}$ \cite{Mai2017}.

More properties of this linearization process were proved in \cite[Section 4]{BBC2018}. The following lemma, which gives an estimate of norm between the polynomial $P$ and the linearization $L$, is of particular interest to us.

\begin{lemma}[Lemma 4.3 \cite{BBC2018}]
	\label{Linv}
	Suppose that $P=P^*\in \C\langle X_1,\ldots,X_k\rangle$, and let $L$ be a linearization of $P$ constructed with the properties above. There exist polynomials $T_1$, $T_2$ with nonnegative coefficients with the following property:
	given arbitrary selfadjoint elements $S_1,\ldots,S_k$ in a unital $C^*$-algebra $\A$, and given $z_0\in \C$ such that $z_0-P(S)$ is invertible, we have
	$$\|(z_0 e_{1,1} - L(S))^{-1}\|\leq T_1(\|S_1\|,\ldots,\|S_k\|)\|(z_0-P(S))^{-1}\|+T_2(\|S_1\|,\ldots,\|S_k\|).$$
	In particular, given constants $C, \delta>0$, there exists $\varepsilon>0$ such that $\textrm{dist}(z_0,\sigma(P(S)))\geq \delta$ and $\|S_1\|+\ldots+\|S_k\|\leq C$ imply $\textrm{dist}(0,\sigma(z_0 e_{1,1}-L(S)))\geq \varepsilon$.
	\end{lemma}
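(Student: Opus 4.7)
The plan is to exploit the Schur complement structure built into the linearization. Writing $L$ in the self-adjoint block form from the excerpt,
$$L = \begin{pmatrix} 0 & u \\ u^* & Q \end{pmatrix},$$
with $u Q^{-1} u^* = -P$, one has the exact factorization
$$z_0 e_{1,1} - L(S) = \begin{pmatrix} 1 & u(S) Q(S)^{-1} \\ 0 & 1 \end{pmatrix} \begin{pmatrix} z_0 - P(S) & 0 \\ 0 & -Q(S) \end{pmatrix} \begin{pmatrix} 1 & 0 \\ Q(S)^{-1} u(S)^* & 1 \end{pmatrix},$$
as one checks by direct multiplication using $u Q^{-1} u^* = -P$. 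When $z_0 - P(S)$ is invertible, the inverse of $z_0 e_{1,1} - L(S)$ is obtained by inverting each factor, and submultiplicativity of the operator norm yields
$$\|(z_0 e_{1,1} - L(S))^{-1}\| \leq \bigl(1 + \|u(S)\|\,\|Q(S)^{-1}\|\bigr)^2 \bigl(\|(z_0 - P(S))^{-1}\| + \|Q(S)^{-1}\|\bigr).$$

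The crucial fact that lets me extract $T_1, T_2$ from this estimate is that $u$ has polynomial entries of degree $\leq 1$ and, more subtly, that $Q^{-1}$ is itself a noncommutative polynomial in $X_1, \ldots, X_k$. I would verify this by induction along the linearization construction recounted in Section~\ref{Linear}: for a monomial $c X_{i_1} \cdots X_{i_l}$ the excerpt explicitly states that the lower right block has a polynomial inverse of degree $l-1$; for a sum $P = P_1 + P_2$ the block $Q$ is block-diagonal, so $Q^{-1}$ is block-diagonal with polynomial entries; and for the self-adjointization $Q = \begin{pmatrix} 0 & Q_0^* \\ Q_0 & 0 \end{pmatrix}$ one has $Q^{-1} = \begin{pmatrix} 0 & Q_0^{-1} \\ (Q_0^*)^{-1} & 0 \end{pmatrix}$, again polynomial. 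Consequently $\|u(S)\|$ and $\|Q(S)^{-1}\|$ are dominated by polynomials $p_u$ and $p_Q$ with nonnegative coefficients in $\|S_1\|, \ldots, \|S_k\|$, and plugging these bounds into the preceding estimate gives
$$\|(z_0 e_{1,1} - L(S))^{-1}\| \leq (1 + p_u p_Q)^2 \,\|(z_0 - P(S))^{-1}\| + (1 + p_u p_Q)^2 \, p_Q,$$
so one may take $T_1 = (1 + p_u p_Q)^2$ and $T_2 = T_1 \cdot p_Q$.

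For the \emph{in particular} assertion, the hypothesis $\mathrm{dist}(z_0, \sigma(P(S))) \geq \delta$ gives $\|(z_0 - P(S))^{-1}\| \leq 1/\delta$, while $\|S_1\| + \cdots + \|S_k\| \leq C$ bounds $T_1$ and $T_2$ by a constant $M = M(C, \delta)$. Thus $\|(z_0 e_{1,1} - L(S))^{-1}\| \leq M$, and the standard Neumann-series perturbation argument (if $|\lambda| < 1/M$ then $z_0 e_{1,1} - L(S) - \lambda = (z_0 e_{1,1} - L(S))\bigl(1 - \lambda (z_0 e_{1,1} - L(S))^{-1}\bigr)$ is still invertible) yields $\mathrm{dist}(0, \sigma(z_0 e_{1,1} - L(S))) \geq 1/M =: \varepsilon$. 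The only real obstacle is the bookkeeping to confirm that $Q^{-1}$ remains a noncommutative polynomial at every inductive step of the linearization procedure; once this is in place the remainder of the argument is routine manipulation of the $2 \times 2$ block factorization.
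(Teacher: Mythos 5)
Your proof is correct, and it follows the standard Schur-complement route: the paper does not reprove this lemma but cites it from [BBC2018], whose argument is precisely the block factorization you write down, combined with the observation (built into the linearization construction of Section 2.3) that $Q^{-1}$ is itself a matrix of noncommutative polynomials. The only point worth stating explicitly is that $\|(z_0-P(S))^{-1}\|=1/\mathrm{dist}(z_0,\sigma(P(S)))$ because $P(S)$ is self-adjoint, which you implicitly use in the \emph{in particular} step.
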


\subsection{Newton's Method}
Newton's Method provides a way to locate a zero from an initial guess; it also gives an estimate about the distance between the zero and the initial guess. It has been generalized into different setups to fit the needs of different applications. The one that we need is to locate a zero of a nonlinear operator on a Banach space.

Let $F$ be a nonlinear operator on a domain $G$ in a Banach space $X$. To use Newton's method to locate the solution $F(x) = 0$, we first make an initial guess $y_0$. If $F$ is twice differentiable, and $F'$ is invertible in a neighborhood of $y_0$, then the iterations
$$y_{n+1} = y_n - [F'(y_n)]^{-1}F(y_n)$$
converge as $n\to \infty$ to the solution of $F(x)=0$ under the sufficient conditions of the following theorem due to Kantorovich \cite{Kantorovich1948}.
\begin{theorem}[Kantorovich]
	Assume, in the framework described above, there are constants $C_0$, $\delta_0$ and $A$ such that the following conditions hold:
	\begin{enumerate}
		\item The operator $F'(y_0)$ is invertible whose inverse has norm $\|[F'(y_0)]^{-1}\|\leq C_0$;
		\item $\|[F'(y_0)]^{-1}F(y_0)\|\leq \delta_0$;
		\item the second derivative $F''(y)$, $y\in G$ is bounded in $G$ by $A$, namely $\|F''(y)\|\leq A$;
		\item the constants $C_0$, $\delta_0$, $A$ satisfy the inequality $h_0 = C_0\delta_0A\leq \frac{1}{2}$.
		\item $B\left(y_0, \frac{1-\sqrt{1-2h_0}}{h_0}\delta_0\right)\subseteq G.$
	\end{enumerate}
	Then the equation $F(w)=0$ has a unique solution $w^*$ in the ball $B\left(y_0, \frac{1-\sqrt{1-2h_0}}{h_0}\delta_0\right)$ centered at our initial approximation $w_0$ and the iterations of the Newton's method $w_n\to w^*$ as $n\to \infty$.
\end{theorem}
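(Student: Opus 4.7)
The plan is to prove Kantorovich's theorem by a majorant-function argument, comparing the operator Newton sequence $(y_n)$ to the real Newton sequence for an explicit quadratic polynomial. I would first introduce
$$\phi(t) = \frac{A}{2}t^2 - \frac{1}{C_0}t + \frac{\delta_0}{C_0},$$
whose value, derivative, and second derivative at $t=0$ encode the three constants appearing in hypotheses (1)--(3). The assumption $h_0 = C_0\delta_0 A \leq 1/2$ is exactly what makes the discriminant $1-2h_0$ nonnegative, giving $\phi$ two nonnegative roots
$$t^* = \frac{1-\sqrt{1-2h_0}}{h_0}\delta_0,\qquad t^{**} = \frac{1+\sqrt{1-2h_0}}{h_0}\delta_0.$$
The scalar Newton iteration $t_{n+1} = t_n - \phi(t_n)/\phi'(t_n)$ starting at $t_0 = 0$ then produces a monotonically increasing sequence in $[0, t^*)$ converging to $t^*$.

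The core inductive claim I would prove is that for every $n$ the iterate $y_n$ lies in $G$, the derivative $F'(y_n)$ is invertible, and
$$\|y_{n+1}-y_n\| \leq t_{n+1}-t_n,\qquad \|[F'(y_n)]^{-1}\| \leq -\frac{1}{\phi'(t_n)},\qquad \|F(y_n)\| \leq \phi(t_n).$$
Invertibility of $F'(y_n)$ follows from the factorization $F'(y_n) = F'(y_0)\bigl[I + [F'(y_0)]^{-1}(F'(y_n)-F'(y_0))\bigr]$ together with the bound $\|F'(y_n)-F'(y_0)\|\leq A\|y_n-y_0\|\leq At_n$ and the Neumann series, since $h_0\leq 1/2$ guarantees $C_0 A t_n < 1$ for all $n$. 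The quadratic estimate on $F(y_{n+1})$ comes from Taylor's formula with integral remainder,
$$F(y_{n+1}) = F(y_n) + F'(y_n)(y_{n+1}-y_n) + \int_0^1(1-s)\,F''\bigl(y_n + s(y_{n+1}-y_n)\bigr)\bigl[(y_{n+1}-y_n)^{\otimes 2}\bigr]\,ds,$$
in which the Newton step is designed precisely to cancel the first two terms. Applying $\|F''\|\leq A$ and comparing with the analogous identity for $\phi$ at $t_n, t_{n+1}$ then propagates all three bounds simultaneously, while hypothesis (5) ensures the segment $y_n + s(y_{n+1}-y_n)$ stays inside $G$ so the expansion is valid.

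Once these inductive bounds are in place, $(t_n)$ Cauchy forces $(y_n)$ Cauchy inside the closed ball $\overline{B(y_0,t^*)} \subseteq G$, so by completeness $y_n \to w^*$ and by continuity of $F$ we have $F(w^*)=0$. For uniqueness inside $B(y_0, t^*)$, if $F(\tilde w) = 0$ with $\tilde w$ in this ball, I would write
$$0 = F(w^*) - F(\tilde w) = \left(\int_0^1 F'\bigl(\tilde w + s(w^*-\tilde w)\bigr)\,ds\right)(w^* - \tilde w)$$
and apply the same Neumann-series argument, now with the full segment of integration staying strictly inside $B(y_0, t^{**})$. The main obstacle I anticipate is not any single inequality but the coordinated bookkeeping of the three inductive bounds above; they must propagate in lockstep with the scalar Newton dynamics of $\phi$, and this lockstep is exactly what pins down the specific radius $\frac{1-\sqrt{1-2h_0}}{h_0}\delta_0$ appearing in the statement.
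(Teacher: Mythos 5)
The paper does not prove this statement at all: it is quoted as a classical theorem with a citation to Kantorovich (1948) and then used as a black box, so there is no ``paper proof'' to compare against. Your majorant-function argument is the standard and correct proof of the Kantorovich theorem: the quadratic $\phi(t)=\tfrac{A}{2}t^2-\tfrac{t}{C_0}+\tfrac{\delta_0}{C_0}$ has the right data at $t=0$, its smaller root is exactly the radius $\tfrac{1-\sqrt{1-2h_0}}{h_0}\delta_0$ in the statement, and the three inductive bounds you list (step length, inverse of the derivative via the Neumann series with $-1/\phi'(t_n)=C_0/(1-C_0At_n)$, and the quadratic residual bound from the integral form of Taylor's theorem) propagate exactly as you describe, with hypothesis (5) keeping all iterates and segments inside $G$. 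Two minor points worth tightening if you write this out in full: the limit $w^*$ is a priori only in the \emph{closed} ball of radius $t^*$ (since $\|w^*-y_0\|\le t^*$), which is how the theorem is usually stated; and in the borderline case $h_0=\tfrac12$ one has $t^*=t^{**}=1/(C_0A)$, so the Neumann-series step in your uniqueness argument degenerates at the boundary of the ball and needs the separate (standard) treatment of that case rather than an appeal to being ``strictly inside $B(y_0,t^{**})$''. Neither issue affects the substance of the argument.
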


\section{The Unitarily Invariant Model}
\subsection{The Sum of Two $M_n(\C)$-valued Matrices}
\label{Uinv}
Fix two sequences $\{C_N\}_{N\in\N}$ and $\{D_N\}_{N\in\N}$ of deterministic diagonal matrices $C_N, D_N\in M_N(\R)$. Suppose that both sequences have a strong limiting (deterministic) empirical eigenvalue distribution. Let $\{U_N\}_{N\in\N}$ be a sequence of $N\times N$ unitary matrices distributed uniformly on $U(N)$. 

We view $d_N = U_N D_N U_N^\ast$ and $c_N = C_N$ as random matrices in the space $M_N(\C)\tensor L^\infty$. Let $(\A, \tau)$ be a $W^*$-probability space and let $c,d\in\A$ be freely independent self-adjoint variables such that $(c_N, d_N)\to (c, d)$ strongly in distribution. The $M_n(\C)$-valued probability spaces $(M_n(\C)\tensor (M_N(\C)\tensor L^\infty), M_n(\E\tr_N), M_n(\C))$, where $\E$ is the expectation, fit in the framework of operator-valued noncommutative probability space introduced in Section \ref{freeprob}. We look at the ``limit" $M_n(\C)$-valued probability space $(M_n(\A), M_n(\tau), M_n(\C))$.

We focus on the $M_n(\C)$-valued unitarily invariant matrix model
$$H_N= \gamma_1\tensor c_N +  \gamma_2\tensor d_N\in M_n(\A),$$
where $\gamma_1, \gamma_2$ are Hermitian matrices in $M_n(\C)$. (Later we apply the results in this section to a linearization of a noncommutative polynomial $P(c_N, d_N)$.)

\begin{notation}
	We use the following notations throughout the rest of this paper:
	\begin{enumerate}
		\item $\E$ is the expectation of a random variable (in a probability space). We also use $\E$ to mean taking expectation entrywise in a matrix; $\E$ is completely positive.
		\item $H_N = \gamma_1 \tensor c_N + \gamma_2\tensor d_N$; and $H = \gamma_1\tensor c+\gamma_2\tensor d$.
		\item $R_N(\beta) = (\beta\tensor I_N - H_N)^{-1}$ is the resolvent of $H_N$, where $\beta\in \bbH^+(M_n(\C))$. 
		\item The (random $M_n(\C)$-valued) Cauchy transform $G_{H_N}(\beta) = M_n(\tr_N)(R_N(\beta))$. We also write the (random) Cauchy transform $G_{c_N}(\beta) = M_n(\tr_N)((\beta\tensor I_N-\gamma_1\tensor c_N)^{-1})$ and a similar definition for $d_N$.
		\item We write $G_H(\beta) = M_n(\tau)[(\beta\tensor I_N-H)^{-1}]$, $\beta\in \bbH^+(M_n(\C))$, and $R(\beta) = (\beta\tensor I_N-H)^{-1}$. We sometimes write $\beta$ in place of $\beta\tensor I_N$ when context is clear.
		\item We define approximate subordination functions, for $\beta\in\bbH^+(M_n(\C))$ by
			\begin{itemize}
				\item $\omega_{1}^{(N)}(\beta) = \beta - (\E G_{H_N}(\beta))^{-1}(\E f_{1}^{(N)}(\beta))$
				\item $\omega_{2}^{(N)}(\beta) = \beta - (\E G_{H_N}(\beta))^{-1}(\E f_{2}^{(N)}(\beta))$
				\end{itemize}
			where $f_{1}^{(N)}(\beta) = M_n(\tr_N)(R_N(\beta)\cdot (\gamma_2\tensor d_N))$ and $f_{2}^{(N)}(\beta) = M_n(\tr_N)(R_N(\beta)\cdot (\gamma_1\tensor c_N))$.
		\end{enumerate}
	\end{notation}

These are analogs of definitions from \cite{Kargin2015}. We observe that
$$f_{1}^{(N)}(\beta) + f_{2}^{(N)}(\beta) = M_n(\tr_N)((H_N-\beta) R_N(\beta))+M_n(\tr_N)(\beta R_N(\beta)) = -I_N+\beta G_{H_N}(\beta)$$
and $$\omega_{1}^{(N)}(\beta)+\omega_{2}^{(N)}(\beta) =\beta + (\E G_{H_N}(\beta))^{-1}.$$
The approximate subordination functions satisfy the same equation as the subordination functions of the free random variables in the limit, as $N\to\infty$. We will show that these functions are ``almost" self-map of the upper half plane $\bbH^+(M_n(\C))$ (See Proposition \ref{Impart}) and that they converge to the subordination functions of the free random variables (See equation \eqref{OmegaExp} and \cite[Proposition 8.8]{BBC2018}). An explicit estimate of these approximate subordination functions is given in Proposition \ref{Newton}. We use them when we approximate the Cauchy transform $G_{H_N}(\beta)$ when $\beta$ is very close to the boundary $\partial \bbH^+(M_n(\C))$ of the upper half plane $\bbH^+(M_n(\C))$.

From now on, we simply write $\tr$ instead of $\tr_N$ for the normalized trace for $N\times N$ matrices.

\begin{lemma} We have
	\label{Id1}
	$$\E(G_{H_N}(\beta)\tensor I_N\cdot\gamma_2\tensor d_N\cdot R_N(\beta)) 
	=\E[f_{1}^{(N)}(\beta) R_N(\beta)].$$
	\end{lemma}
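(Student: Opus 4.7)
The identity is most naturally attacked by unpacking both sides via the block decomposition of $R_N(\beta)$ and reducing to a cleaner block-wise identity of matrix-valued expectations, which is then proved via the Haar invariance of $U_N$.

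Write $R_N(\beta)=\sum_{i,j=1}^n e_{ij}\tensor R_{ij}$ with (random) $R_{ij}\in M_N(\C)$. Direct calculation from the definitions gives $(G_{H_N}(\beta))_{ij}=\tr(R_{ij})$ and $(f_1^{(N)}(\beta))_{ij}=\sum_{k=1}^n(\gamma_2)_{kj}\tr(R_{ik}d_N)$. Multiplying out the tensor products and identifying $M_n(\C)$-blocks, the $(i,j)$-block of the LHS is $\sum_{k,l=1}^n(\gamma_2)_{kl}\tr(R_{ik})\,d_N\,R_{lj}$ and the $(i,j)$-block of the RHS is $\sum_{k,l=1}^n(\gamma_2)_{kl}\tr(R_{ik}\,d_N)\,R_{lj}$. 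Since $\gamma_2\in M_n(\C)$ is arbitrary, matching coefficients of $(\gamma_2)_{kl}$ reduces the claim to the blockwise identity
\[
\E\bigl[\tr(R_{ik})\,d_N\,R_{lj}\bigr]=\E\bigl[\tr(R_{ik}\,d_N)\,R_{lj}\bigr],\qquad i,j,k,l=1,\ldots,n.
\]

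To prove this reduced identity I would use the Haar distribution of $U_N$ via the Schwinger--Dyson (integration-by-parts) formula on $U(N)$: for every smooth matrix-valued functional $F$ on $U(N)$ and every $A\in\mathfrak{u}(N)$, $\E\bigl[\tfrac{d}{dt}|_{t=0}F(U_N e^{tA})\bigr]=0$. Applied to $F(U)=\tr(R_{ik}(U))R_{lj}(U)$, the chain rule gives two contributions (one from each factor). Using $\tfrac{d}{dt}|_{t=0}d_N(Ue^{tA})=U_N[A,D_N]U_N^{*}$ together with the resolvent derivative $L_AR_N=R_N\bigl(\gamma_2\tensor U_N[A,D_N]U_N^{*}\bigr)R_N$ yields a family of identities; summing them over an orthonormal basis $\{A_a\}$ of $\mathfrak{u}(N)$ and using the Casimir-type completeness relation for $\u(N)$ regroups the two product-rule terms into precisely the difference of the two sides of the reduced identity, forcing it to vanish.

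\textbf{Main obstacle.} The principal technical challenge is executing the Schwinger--Dyson step cleanly in the presence of $c_N$. Because $c_N$ is deterministic and therefore \emph{not} invariant under left translation $U\mapsto Ue^{tA}$, the derivative $L_AR_N$ injects the commutator $[A,D_N]$ into the resolvent in a way that couples the full block structure of $R_N$ to $D_N$. Organizing the sum over the basis of $\mathfrak{u}(N)$ so that these $c_N$-twisted contributions cancel cleanly, and the two product-rule terms regroup into the desired difference, will require careful index bookkeeping together with the resolvent identity $(\beta-H_N)R_N=I$ applied at the block level.
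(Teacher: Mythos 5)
Your reduction to the blockwise identity $\E[\tr(R_{ik})\,d_N\,R_{lj}]=\E[\tr(R_{ik}\,d_N)\,R_{lj}]$ is essentially the paper's own intermediate step (its identity for the blocks $A_{j_1,j_2}$ of $R_N(\beta)$), although your justification "since $\gamma_2$ is arbitrary, match coefficients of $(\gamma_2)_{kl}$" is not valid: the blocks $R_{ij}$ themselves depend on $\gamma_2$ through $R_N(\beta)=(\beta-\gamma_1\tensor c_N-\gamma_2\tensor d_N)^{-1}$, so $\gamma_2$ cannot be varied with the $R_{ij}$ held fixed. This slip is harmless, since only the implication "blockwise identity $\Rightarrow$ lemma" is needed, and that direction is just summation against $(\gamma_2)_{kl}$.

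The genuine gap is in your proposed proof of the blockwise identity. Applying the invariance/Schwinger--Dyson relation to the \emph{product} $F(U)=\tr(R_{ik})R_{lj}$ produces, via the Leibniz rule, two terms each containing a double-resolvent insertion: schematically $\tr\bigl(\sum_{r,s}(\gamma_2)_{rs}R_{ir}(\partial_Ad_N)R_{sk}\bigr)R_{lj}$ and $\tr(R_{ik})\sum_{r,s}(\gamma_2)_{rs}R_{lr}(\partial_Ad_N)R_{sj}$. After summing over a basis (equivalently taking $A=e_{ab}$ and contracting indices), the second term does yield a difference of the desired shape, but multiplied by the extra random factor $\tr(R_{ik})$ \emph{inside} the expectation, and the first term leaves a nonvanishing remainder of the form $\frac{1}{N^2}(d_NR_{sk}R_{ir}R_{lj}-R_{sk}R_{ir}d_NR_{lj})$; these do not regroup into $\E[\tr(R_{ik})d_NR_{lj}]-\E[\tr(R_{ik}d_N)R_{lj}]$. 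The correct mechanism --- the one the paper uses, quoting the proof of Lemma 8.1 of \cite{BBC2018} --- is to apply the invariance identity to the resolvent \emph{alone}: $\E\bigl(R_N(\beta)[I_n\tensor Z,\gamma_2\tensor d_N]R_N(\beta)\bigr)=0$ for every $Z\in M_N(\C)$, coming from the left translation $U_N\mapsto e^{tZ}U_N$, so that the commutator is with $d_N$ rather than $D_N$ and no stray factors of $U_N$ appear. Taking $Z=e_{ab}$, the $(u,v)$ entry of a block of $R_N(\gamma_2\tensor e_{ab}d_N)R_N$ factors as $(A_{j_1,j_2})_{ua}(d_NA_{k_1,k_2})_{bv}$; setting $u=a$ and averaging over $a$ turns the first factor into a normalized trace, and the two terms of the commutator become exactly the two sides of the blockwise identity. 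In other words, the product of a trace with a matrix arises from index contraction of a single first-derivative identity, not from a product rule; and your stated "main obstacle" about $c_N$-twisted contributions evaporates, since $c_N$ does not depend on $U_N$ and is never differentiated.
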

\begin{proof}
	By \cite[Proof of Lemma 8.1]{BBC2018}, for any $Z\in M_N(\C)$,
	\begin{equation}
		\label{commut}
		\E(R_N(\beta)[I_n\tensor Z,\gamma_2\tensor d_N]R_N(\beta)) = 0
		\end{equation}
	where the $[\cdot,\cdot]$ is the commutator.
	
	The matrix $R_N(\beta)\in M_{nN}(\C)\tensor L^\infty$ is of the form $\sum_{j_1,j_2=1}^n e_{j_1,j_2}\tensor A_{j_1,j_2}$, where $e_{j_1, j_2}$ is the $n\times n$ matrix with $1$ in the $(j_1,j_2)$-entry and $0$ elsewhere. We take $Z = e_{ab}$. Since $(Zd_N)_{xy} = \delta_{xa} (d_N)_{by}$ and $(d_N Z)_{xy} = \delta_{yb}(d_N)_{xa}$, the $(u,v)$-entries of $A_j(Zd_N)A_k$ and $A_j(d_N Z)A_k$ are $(A_j)_{ua}(d_N A_k)_{bv}$ and $(A_j d_N)_{ua}(A_k)_{bv}$ respectively. 
	Expanding the matrix multiplication gives
	$$R_N(\beta)\cdot \gamma_2\tensor (Z d_N)\cdot R_N(\beta) = \sum_{j_1, j_2,k_1,k_2}e_{j_1,j_2}\,\gamma_2\,e_{k_1,k_2} \tensor A_{j_1,j_2} (Z d_N) A_{k_1,k_2}$$
	and similarly 
	$$R_N(\beta)\cdot \gamma_2\tensor (d_N Z)\cdot R_N(\beta) = \sum_{j_1,j_2,k_1,k_2}e_{j_1,j_2}\,\gamma_2\,e_{k_1,k_2} \tensor A_{j_1,j_2}  (d_N Z) A_{k_1,k_2}.$$
	By \eqref{commut}, the expectation of above two quantities are equal, so
	$$\E\left[\sum_{j_1,j_2,k_1,k_2} e_{j_1,j_2}\,\gamma_2\,e_{k_1,k_2}\cdot (A_{j_1,j_2})_{ua}(d_NA_{k_1,k_2})_{bv}\right] = \E\left[\sum_{j_1,j_2,k_1,k_2}e_{j_1,j_2}\,\gamma_2\,e_{k_1,k_2}\cdot (A_{j_1,j_2}d_N)_{ua}(A_{k_1,k_2})_{bv}\right].$$
	We first take $u=a$, then sum over all $a=1,2,\ldots,N$ and divide by $N$; we have
	$$\E\left[\sum_{j_1,j_2,k_1,k_2} e_{j_1,j_2}\,\gamma_2\,e_{k_1,k_2}\cdot \tr(A_{j_1,j_2})(d_NA_{k_1,k_2})_{bv}\right] = \E\left[\sum_{j_1,j_2,k_1,k_2}\,e_{j_1,j_2}\,\gamma_2\,e_{k_1,k_2}\cdot \tr(A_{j_1,j_2}d_N)(A_{k_1,k_2})_{bv}\right].$$
	Since this this is true for all $b$, $v$, we see that
	\begin{equation}
	\begin{split}
		&\E\left[\sum_{j_1,j_2,k_1,k_2}e_{j_1,j_2}\,\gamma_2\,e_{k_1,k_2}\tensor \tr(A_{j_1,j_2})(d_N A_{k_1,k_2})\right] \\
		= &\E\left[\sum_{j_1,j_2,k_1,k_2} e_{j_1,j_2}\,\gamma_2\,e_{k_1,k_2}\tensor \tr(A_{j_1,j_2}d_N)A_{k_1,k_2}\right]. \label{commut2}
		\end{split}
	\end{equation}
	Thus
	\begin{align*}
		\E(G_{H_N}(\beta)\,\gamma_2\tensor d_N\,R_N(\beta)) &= \E \left[\sum_{j_1,j_2} e_{j_1,j_2} \tensor (\tr(A_{j_1,j_2})I_N)\cdot \gamma_2\tensor d_N\cdot \sum_{k_1,k_2} e_{k_1,k_2}\tensor A_{k_1,k_2}\right]\\
		&= \E\left[\sum_{j_1,j_2,k_1,k_2}e_{j_1,j_2}\,\gamma_2\,e_{k_1,k_2} \tensor \tr(A_{j_1,j_2})(d_NA_{k_1,k_2})\right]\\
		&=\E\left[\sum_{j_1,j_2,k_1,k_2} e_{j_1,j_2}\, \gamma_2\,e_{k_1,k_2}\tensor \tr(A_{j_1,j_2} d_N)A_{k_1,k_2}\right]\qquad\qquad\textrm{by \eqref{commut2}}\\
		&=\E\left[\left(\sum_{j_1,j_2} e_{j_1,j_2}\gamma_2\tensor \tr(A_{j_1,j_2}d_N)I_N\right)\left( \sum_{k_1,k_2} e_{k_1,k_2}\tensor A_{k_1,k_2}\right) \right]\\
		&= \E \left[M_n(\tr)(R_N(\beta)\cdot \gamma_2\tensor d_N)\tensor I_N \cdot R_N(\beta)\right]\\
		&=\E[f_{1}^{(N)}(\beta) R_N(\beta)],
		\end{align*}
		and the lemma follows.
	\end{proof}

The subordination function $\omega_1$ is given by
$$\omega_1(\beta) \tensor 1= M_n(\tau)[R(\beta)]^{-1}+\gamma_1\tensor c.$$
To imitate the behavior of $\omega_1$ in the case of finite dimensional matrices, we try to look at $\E[R_N(\beta)]^{-1}+\gamma_1\tensor c_N$. However, this function is no longer of the form $M_n(\C)\tensor I_N$ because $c_N$ and $d_N$ are not free.

We next estimate how far $\omega_{1}^{(N)}$ is from $\E[R_N(\beta)]^{-1}+\gamma_1\tensor c_N$. The error term $\delta_1(\beta)$ is defined to be
\begin{equation}
\label{delta1}
\delta_1(\beta) = \E[G_{H_N}(\beta)]^{-1}\E\Delta_1(\beta)
\end{equation}
where
\begin{equation}
\label{Delta}
\Delta_1(\beta) = \E[(f_{1}^{(N)}-\E f_{1}^{(N)})\,R_N(\beta)]-\E[(G_{H_N}(\beta)-\E G_{H_N}(\beta))\,(\beta-\gamma_1\tensor c_N)\,R_N(\beta)].
\end{equation}
The definition of $\Delta_1$ allows us to approximate by applying concentration inequality to $f_{1}^{(N)}-\E f_{1}^{(N)}$ and $G_{H_N}(\beta)-\E G_{H_N}(\beta)$. We also define $\delta_2$ and $\Delta_2$ analoguously, by replacing $\gamma_1\tensor c_N$ by $\gamma_2\tensor d_N$ and $f_1^{(N)}$ by $f_2^{(N)}$. 
We use the same notation as above.
\begin{proposition}
	\label{error}
	We have
	\begin{equation}
		\label{remainder}
		(\omega_{1}^{(N)}(\beta)-\gamma_1\tensor c_N)\E[R_N(\beta)] = 1 + \delta_1(\beta).
		\end{equation}
\end{proposition}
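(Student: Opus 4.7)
My strategy is to establish the identity by equating two expressions for $\E[\gamma_2\tensor d_N\cdot R_N(\beta)]$: one coming from the resolvent identity and one coming from Lemma~\ref{Id1}. From $R_N(\beta) = (\beta\tensor I_N - H_N)^{-1}$ with $H_N = \gamma_1\tensor c_N + \gamma_2\tensor d_N$, the identity $(\beta - H_N)R_N(\beta) = 1$ rearranges to $\gamma_2\tensor d_N\cdot R_N(\beta) = (\beta-\gamma_1\tensor c_N)R_N(\beta) - 1$. Since $\gamma_1\tensor c_N$ is deterministic, taking expectations gives
\[
\E[\gamma_2\tensor d_N\cdot R_N(\beta)] = (\beta-\gamma_1\tensor c_N)\,\E R_N(\beta) - 1. \qquad (\star)
\]

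For the second expression, Lemma~\ref{Id1} provides
\[
\E\bigl[G_{H_N}(\beta)\tensor I_N\cdot \gamma_2\tensor d_N\cdot R_N(\beta)\bigr] = \E[f_1^{(N)}(\beta)\,R_N(\beta)].
\]
I decompose each random factor as mean plus centered fluctuation: $G_{H_N}(\beta) = \E G_{H_N}(\beta) + (G_{H_N}(\beta) - \E G_{H_N}(\beta))$ on the left, and $f_1^{(N)}(\beta) = \E f_1^{(N)}(\beta) + (f_1^{(N)}(\beta) - \E f_1^{(N)}(\beta))$ on the right. Pulling the deterministic means outside the expectations yields
\[
\E G_{H_N}(\beta)\cdot \E[\gamma_2\tensor d_N\cdot R_N(\beta)] - \E f_1^{(N)}(\beta)\cdot \E R_N(\beta) = \E\bigl[(f_1^{(N)}-\E f_1^{(N)})\,R_N(\beta)\bigr] - \E\bigl[(G_{H_N}-\E G_{H_N})\tensor I_N\cdot \gamma_2\tensor d_N\cdot R_N(\beta)\bigr].
\]
Inside the last centered term I again substitute from $(\star)$, i.e.\ $\gamma_2\tensor d_N\cdot R_N(\beta) = (\beta-\gamma_1\tensor c_N)R_N(\beta) - 1$; the constant piece produces $\E[G_{H_N}-\E G_{H_N}]=0$ and drops out, so the right-hand side becomes exactly $\Delta_1(\beta)$ as defined in \eqref{Delta}. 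Multiplying on the left by $(\E G_{H_N}(\beta))^{-1}$ (invertible because $\Im\E G_{H_N}(\beta)$ is strictly negative for $\beta\in\bbH^+(M_n(\C))$) and using the defining relations $\beta - \omega_1^{(N)}(\beta) = (\E G_{H_N}(\beta))^{-1}\E f_1^{(N)}(\beta)$ and $\delta_1(\beta) = (\E G_{H_N}(\beta))^{-1}\Delta_1(\beta)$ yields
\[
\E[\gamma_2\tensor d_N\cdot R_N(\beta)] = (\beta - \omega_1^{(N)}(\beta))\,\E R_N(\beta) + \delta_1(\beta).
\]

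Equating this with $(\star)$ and cancelling the common $\beta\,\E R_N(\beta)$ gives the claim $(\omega_1^{(N)}(\beta) - \gamma_1\tensor c_N)\E R_N(\beta) = 1 + \delta_1(\beta)$. The calculation is essentially a short algebraic manipulation, and there is no substantive obstacle beyond the input already provided by Lemma~\ref{Id1}. The one point requiring some attention is consistently identifying the $M_n(\C)$-valued objects $\omega_1^{(N)}(\beta)$ and $\E G_{H_N}(\beta)$ with their embeddings $\cdot\tensor I_N$ whenever they act on $R_N(\beta)$, and observing that the $-1$ produced by the second application of the resolvent identity is what makes the $\Delta_1$-cancellation work cleanly.
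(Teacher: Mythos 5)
Your proof is correct and follows essentially the same route as the paper's: both start from the resolvent identity $(\beta-\gamma_1\tensor c_N)R_N(\beta)=1+(\gamma_2\tensor d_N)R_N(\beta)$, invoke Lemma \ref{Id1}, split $G_{H_N}$ and $f_1^{(N)}$ into mean plus centered fluctuation so that the centered terms assemble into $\Delta_1(\beta)$, and multiply through by $(\E G_{H_N}(\beta))^{-1}$. The only cosmetic difference is that you isolate $\E[\gamma_2\tensor d_N\cdot R_N(\beta)]$ and equate two expressions for it rather than rearranging one chain of equalities, which changes nothing of substance.
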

\begin{remark}
	We will prove that under extra assumptions in Proposition \ref{Impart}, $\omega_{1}^{(N)}(\beta)-\gamma_1\tensor c_N$ is indeed invertible, and $\omega_{1}^{(N)}(\beta)$ is in $\bbH^+(M_n(\C))$.
\end{remark}
\begin{proof}
	By the identity $(\beta-\gamma_1\tensor c_N)R_N(\beta) = 1+(\gamma_2\tensor d_N)R_N(\beta)$, multiplying $G_H(\beta)\tensor I_N$ and taking expectation give us
	\begin{align*}
		\E[G_{H_N}(\beta)\tensor I_N \cdot(\beta-\gamma_1\tensor c_N)\cdot R_N(\beta)] &= \E[G_{H_N}(\beta)\tensor I_N] + \E[G_{H_N}(\beta)\tensor I_N\cdot \gamma_2\tensor d_N\cdot R_N(\beta)]\\
		&= \E[G_{H_N}(\beta)\tensor I_N] + \E[f_{1}^{(N)}\tensor I_N \cdot R_N(\beta)]
	\end{align*}
where the last equality follows from Lemma \ref{Id1}. Now the left hand side is $$\E[(G_{H_N}(\beta)-\E[G_{H_N}(\beta)])(\beta-\gamma_1\tensor c_N)R_N(\beta)]+\E[G_{H_N}(\beta)]\E[(\beta-\gamma_1\tensor c_N)R_N(\beta)]$$ while the right hand side is $$\E[G_{H_N}(\beta)]+\E[(f_{1}^{(N)}(\beta)-\E[f_{1}^{(N)}(\beta)])R_N(\beta)]+\E[f_{1}^{(N)}(\beta)]\E[R_N(\beta)].$$ Rearranging the terms gives
\begin{align*}
	&\E[G_{H_N}(\beta)]\E[(\beta-\gamma_1\tensor c_N-(\E G_{H_N}(\beta))^{-1}(\E f_{d_N}(\beta)))\cdot R_N(\beta)]\\
	=&\E[G_{H_N}(\beta)]+\E[(f_{1}^{(N)}(\beta)-\E f_{1}^{(N)}(\beta))\cdot R_N(\beta)]-\E[(G_{H_N}(\beta)-\E G_{H_N}(\beta))\cdot(\beta-\gamma_1\tensor c_N)\cdot R_N(\beta)].
	\end{align*}
	Recall that $\omega_{1}^{(N)}(\beta) = \beta - (\E G_{H_N}(\beta))^{-1}(\E f_{1}^{(N)}(\beta))$; it follows that
	$$(\omega_{1}^{(N)}(\beta)-\gamma_1\tensor c_N)\E[R_N(\beta)] = 1 + \delta_1(\beta)$$
	where $\Delta_1(\beta)$ is defined in \eqref{Delta}.
	\end{proof}
	
	Assume that $\eta \in (0,1)$ and $z\in K$ for some bounded set $K\subseteq \bbH^+(\C)$. Using linearization technique, we are particularly interested in $\beta = z e_{1,1}-\gamma_0+i\eta$. Its real part is bounded uniformly for all $z\in K$ and $\|(\Im\beta)^{-1}\| = \frac{1}{\eta}$ since $z\in \bbH^+(\C)$. We look more generally at $\{\beta\in\bbH^+(M_n(\C)): \|\beta\|\leq K, \|(\Im\beta)^{-1}\|=\frac{1}{\eta}\}$ where $\eta\in(0,1)$, and $K>0$.
	\begin{lemma}
		\label{DeltaEst}
		Suppose that $\beta\in\bbH^+(M_n(\C))$, $ \|\beta\|\leq K$, and $\|(\Im\beta)^{-1}\|=\frac{1}{\eta}>1$. Then
		$$\|\E\Delta(\beta)\| \leq C\left(\frac{1}{N^{\frac{1}{2}}\eta^3}\right)$$
		where $C$ depends only on $K$, $\gamma_1$, $\gamma_2$, $\|c\|$, $\|d\|$.
		\end{lemma}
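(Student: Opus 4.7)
The plan is to bound each of the two summands defining $\Delta_1(\beta)$ separately by pairing a centered random factor (controlled by concentration of measure on $U(N)$) with a deterministically bounded factor coming from the resolvent. Write $\Delta_1(\beta) = T_1 - T_2$, where
$$T_1 = \E\bigl[(f_1^{(N)}(\beta) - \E f_1^{(N)}(\beta))\,R_N(\beta)\bigr],\quad T_2 = \E\bigl[(G_{H_N}(\beta) - \E G_{H_N}(\beta))\,(\beta - \gamma_1\tensor c_N)\,R_N(\beta)\bigr].$$
Since $\Im\beta\geq \eta I_n$ we have $\|R_N(\beta)\|\leq 1/\eta$ deterministically, and the resolvent identity $(\beta-\gamma_1\tensor c_N)R_N(\beta) = I_{nN} + (\gamma_2\tensor d_N)R_N(\beta)$ gives the deterministic bound $\|(\beta-\gamma_1\tensor c_N)R_N(\beta)\|\leq 1 + \|\gamma_2\|\|d_N\|/\eta \leq C/\eta$. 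Applying the matrix Cauchy--Schwarz inequality $\|\E[AB]\|\leq (\E\|A\|^2)^{1/2}(\E\|B\|^2)^{1/2}$ reduces the lemma to the two variance estimates
$$\bigl(\E\|f_1^{(N)}(\beta)-\E f_1^{(N)}(\beta)\|^2\bigr)^{1/2},\ \bigl(\E\|G_{H_N}(\beta)-\E G_{H_N}(\beta)\|^2\bigr)^{1/2} \leq \frac{C}{N^{1/2}\eta^2}.$$

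To obtain these, I would view each of the $n^2$ matrix entries of $f_1^{(N)}(\beta)$ and $G_{H_N}(\beta)$ as a smooth scalar function of $U_N \in U(N)$ (with $C_N$ and $D_N$ held fixed) and compute its Lipschitz constant in the Hilbert--Schmidt metric. Along a skew-Hermitian direction $X$ one has $\partial_X d_N = [d_N, X]$ and $\partial_X R_N(\beta) = R_N(\beta)\cdot(\gamma_2\tensor[d_N,X])\cdot R_N(\beta)$, so every entry of the directional derivative is a finite linear combination of expressions $\tr_N(A[d_N,X]B)$ in which $A,B\in M_N(\C)$ are products of at most two resolvent blocks $(R_N)_{ij}$ together with $d_N$ and scalar entries of $\gamma_2$. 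Hence $\|A\|\cdot\|B\|\leq C'/\eta^2$, and the crude bound $|\tr_N(A[d_N,X]B)|\leq \|A\|\|B\|\cdot 2\|d_N\|\cdot\|X\|_{HS}$ gives each entry a Lipschitz constant $L\leq C''/\eta^2$, with $C''$ depending only on $K,\gamma_1,\gamma_2,\|c\|,\|d\|$ (uniform bounds on $\|c_N\|$ and $\|d_N\|$ being supplied by strong convergence).

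Gromov--Milman concentration on $U(N)$ then implies that an $L$-Lipschitz real-valued function on $U(N)$ has variance at most $CL^2/N$. Summing over the $n^2$ entries of each matrix and using $\|\cdot\|_{op}\leq \|\cdot\|_{HS}$ gives $\E\|f_1^{(N)}(\beta)-\E f_1^{(N)}(\beta)\|^2 \leq C/(N\eta^4)$, and likewise for $G_{H_N}(\beta)$. Combining with the deterministic estimates on $R_N(\beta)$ and $(\beta-\gamma_1\tensor c_N)R_N(\beta)$ yields $\|T_j\|\leq (C/\eta)\cdot C/(N^{1/2}\eta^2) = C/(N^{1/2}\eta^3)$ for $j=1,2$, hence $\|\E\Delta_1(\beta)\|\leq C/(N^{1/2}\eta^3)$. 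The same argument handles $\Delta_2(\beta)$ after swapping the roles of $c_N$ and $d_N$ in the decomposition (noting that only the $d_N$-derivative contributes to the Lipschitz constant, since $c_N = C_N$ is deterministic).

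The main technical obstacle is the entrywise Lipschitz estimate: one has to unwind the block structure of $M_n(\C)\tensor M_N(\C)$ in the derivative of the resolvent and verify that every summand really does factor as $\tr_N(A[d_N,X]B)$ with both flanking operator norms of order $1/\eta$, with constants depending only on the data of the lemma. Everything else (Cauchy--Schwarz, deterministic resolvent bounds, Gromov--Milman concentration) is then routine.
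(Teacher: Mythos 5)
Your proposal is correct and follows essentially the same route as the paper: the same splitting of $\Delta_1$ into two terms, the same deterministic resolvent bounds of order $1/\eta$, and the same concentration-of-measure input for Lipschitz functions of the Haar unitary (which the paper imports from \cite{BBC2018} rather than re-deriving). The only cosmetic difference is that you pass from concentration to $\|\E\Delta_1\|$ via a variance bound and Cauchy--Schwarz, whereas the paper integrates the tail bound $\P(\|\Delta_1\|\geq t)\leq 4\exp(-CN\eta^6 t^2)$; both yield the stated $O(N^{-1/2}\eta^{-3})$.
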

	\begin{proof}
		
		By \cite[Equation 13]{BPV2012}, 
		$$\|R_N(\beta)\|\leq \|(\Im\beta)^{-1}\| = \frac{1}{\eta}.$$
		We estimate $\Delta$ term-by-term. For the second term $\E[(G_{H_N}(\beta)-\E G_{H_N}(\beta))\,(\beta-\gamma_1\tensor c_N)\,R_N(\beta)]$, we have
		\begin{align*}
		\|\E[(G_{H_N}(\beta)-\E G_{H_N}(\beta))\,(\beta-\gamma_1\tensor c_N)\,R_N(\beta)]\|\leq & \|\E[(G_{H_N}(\beta)-\E G_{H_N}(\beta))\|\|(\beta-\gamma_1\tensor c_N)\|\|R_N(\beta)\|\\
		\leq & \frac{C(K, \gamma_1, \gamma_2, c_N, d_N)}{\eta}\|G_{H_N}(\beta) - \E G_{H_N}(\beta)\|.
		\end{align*}
		 By concentration inequality, and by the fact that $G_{H_N}(\beta)-\E G_{H_N}(\beta) = M_n(\tr)(R_N(\beta)-\E(R_N(\beta)))$ has rank bounded by $n$ which is independent of $N$, we obtain, as in \cite[Proof of Proposition 8.4]{BBC2018}
		 $$\P\left(\|G_{H_N}(\beta)-\E G_{H_N}(\beta)\|>\varepsilon\right)\leq 2\exp\left(\frac{-N\varepsilon^2}{8r^4 \|\gamma_2\|^2\|d_N\|^2}\right)$$
		 for any $\varepsilon > 0$ and $\alpha\in (0,\frac{1}{2})$, where $r=\sup_N\|R_N(\beta)\|$ (The norm is taken over $M_n(\C)\tensor L^\infty$, which is a positive constant, \emph{not} a random norm). Since $r = O(1/\eta)$, we have
		 $$\P\left(\|(G_{H_N}(\beta)-\E G_{H_N}(\beta))(\beta-\gamma_1\tensor c_N)R_N(\beta)\|>\varepsilon\right)\leq 2\exp\left(\frac{-c \varepsilon^2N\eta^6}{\|\gamma_2\|^2\|d_N\|^2}\right).$$
		 
		 For the first term $(f_{1}^{(N)}-\E f_{1}^{(N)})R_N(\beta)$, since $f_{d_N}(\beta) = -I_n + M_n(\tr)((\beta-\gamma_1\tensor c_N)R_N(\beta))$, we have
		 \begin{align*}
		 (f_{d_N}-\E f_{d_N}) =& M_n(\tr)\big((\beta-\gamma\tensor c_N)(R_N(\beta)-\E R_N(\beta))\big) \\
		 =& \beta M_n(\tr)[R_N(\beta)-\E R_N(\beta)]-M_n(\tr)[(\gamma_1\tensor c_N)(R_N(\beta)-\E R_N(\beta))].
		 \end{align*}
		 The terms $M_n(\tr)[R_N(\beta)-\E R_N(\beta)]$ and $M_n(\tr)[(\gamma_1\tensor c_N)(R_N(\beta)-\E R_N(\beta))]$ are linear transformations of $R_N(\beta)-\E R_N(\beta)$ with at most rank $n$. Hence,
		 $$\P\left(\|(f_{1}^{(N)}-\E f_{1}^{(N)})R_N(\beta)\|>\varepsilon\right)\leq 2\exp\left(\frac{-C \varepsilon^2 N\eta^6}{\|\gamma_2\|^2\|d_N\|^2}\right).$$
		 Using triangle inequality, we have
		 $$\P(\|\Delta_1\|\geq \varepsilon)\leq 4\exp\left(\frac{-C N\eta^6 \varepsilon^2}{\|\gamma_2\|^2\|d_N\|^2}\right).$$
		 
		 Thus
		 $$\|\E \Delta_1(\beta)\|\leq \E\|\Delta_1(\beta)\|\leq \int_0^\infty 4\exp\left(\frac{-C N\eta^6 t^2}{\|\gamma_2\|^2\|d_N\|^2}\right) dt = O\left(\frac{1}{N^{\frac{1}{2}\eta^3}}\right),$$
		 the lemma is proved.
		\end{proof}
	
	We now would like to consider the map $\beta\mapsto \E[G_{H_N}(\beta)]^{-1}$ for $\beta\in \bbH^+(M_n(\C))$. Analogue to \cite{BBCF2017} where the completely positive map $\E$ is considered, we consider 
	\begin{align*}
	\widetilde{F}(\varepsilon) = &\left((M_n(\tr)\E)[(\Im\beta-\varepsilon(H_N-\Re\beta)^{-1}]\right)^{-1}\\
	= &z\:\Im\beta+\Re\beta-M_n(\tr)[\E(H_N)]\\
	&-\frac{1}{z}(M_n(\tr)\E)[(H_N-(M_n(\tr)\E)(H_N))(\Im\beta^{-1})(H_N-(M_n(\tr)\E)(H_N))]+O\left(\frac{1}{z}\right).
	\end{align*}
	Hence, \begin{align*}
	F(z) = \left((M_n(\tr)\E)[(\Re\beta+z\Im\beta-H_N)^{-1}]\right)^{-1} =A+Bz-\int_\R\frac{\rho(dt)}{z-t}
	\end{align*}
	where $A = \Re\beta-(M_n(\tr)\E)(H_N)$, $B = \Im\beta$ and
	$$\rho(\R) = \lim_{z\to\infty} z(A+Bz-F(z)) = (M_n(\tr)\E)[(H_N-(M_n(\tr)\E)(H_N))(\Im\beta)^{-1}(H_N-(M_n(\tr)\E)(H_N))].$$
	It follows that
	$$\|(\E G_{H_N}(\beta))^{-1}\|\leq \|\beta\|+\|M_n(\tr)\E(H_N)\|+\|\rho(\R)\| = O\left(\frac{1}{\eta}\right)$$
	where constant of the $O$ in last equality is for $\Im\beta \geq \eta$, with $\eta\in(0,1)$ and depends on $\|\beta\|\leq K$.

	Hence, if $\|\beta\|\leq K$ and $\Im\beta\geq \eta\in(0,1)$, Lemma \ref{DeltaEst} shows that
	\begin{equation}
	\label{Eerror}
	\|(\E G_{H_N}(\beta))^{-1}\E\Delta_1(\beta)\| = O\left(\frac{1}{N^{\frac{1}{2}}\eta^4}\right).
	\end{equation}
	
	\bigskip

	The preceding observation shows that $\omega_{1}^{(N)}(\beta)-\gamma_1\tensor c_N$ is invertible as long as $\|(\E G_{H_N}(\beta))^{-1}\E\Delta_1(\beta)\|$ is small, because the invertible matrices form an open set. When $\|(\E G_H(\beta))^{-1}\E\Delta_1(\beta)\|$ is small, we can write \eqref{remainder} as
	$$\E[R_N(\beta)] = (\omega_{1}^{(N)}(\beta)-\gamma_1\tensor c_N)^{-1}\,\left(I + \E[G_{H_N}(\beta)]^{-1}\E[\Delta_1(\beta)]\right).$$
	Hence $\E[R_N(\beta)]^{-1} = \left(I + \E[G_{H_N}(\beta)]^{-1}\E[\Delta_1(\beta)]\right)^{-1} (\omega_{1}^{(N)}(\beta)-\gamma_1\tensor c_N)$ which shows
	\begin{equation}
	\label{OmegaExp}
	\omega_{1}^{(N)}(\beta) = \E[R_N(\beta)]^{-1} + \gamma_1\tensor c_N - \left[(I+\E[G_{H_N}(\beta)]^{-1}\E\Delta(\beta))^{-1}-I\right](\omega_{1}^{(N)}(\beta)-\gamma_1\tensor c_N).
	\end{equation}
	
	The above formula shows the difference between this definition of $\omega$ and the choice from \cite{BBC2018}. This formula will be used as a measure of how far $\omega$ is from being a self-map on the upper half plane of $M_n(\C)$. Then, we estimate the difference of $\omega_1^{(N)}$ from $\E[R_N(\beta)]^{-1} + \gamma_1\tensor c_N$. The advantages of our choice are that the $\omega$'s are already in $M_n(\C)$ which allows us to estimate the Cauchy transforms $G_{c_N}(\omega_{1}^{(N)}(\beta))$ and $G_{d_N}(\omega_{2}^{(N)}(\beta))$; that the functions $\omega_1^{(N)}$ and $\omega_2^{(N)}$ satisfy the same equation as the subordination functions also allows us to apply Newton's method.
	
	The following proposition gives an estimate of the imaginary part of $\omega_{1}^{(N)}(\beta)$, in addition to the invertibility of $\omega_{1}^{(N)}(\beta)-\gamma_1\tensor c_N$.
	\begin{proposition}
		\label{Impart}
		Suppose $\beta\in \bbH^+(\C)$, $\|\beta\|\leq K$ and $\|(\Im\beta)^{-1}\|=\frac{1}{\eta}$, $\eta\in(0,1)$.
		If $N^{\frac{1}{2}}\eta^4$ is large enough, then
		$$\Im(\omega_{1}^{(N)}(\beta))\geq \Im\beta - \frac{c}{N^{\frac{1}{2}}\eta^6}$$
		for some positive constant $c$. In particular, if $N^{\frac{1}{2}}\eta^6$ is large enough, $\omega_{1}^{(N)}(\beta)\in \bbH^+(M_n(\C))$.
		\end{proposition}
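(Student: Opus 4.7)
The plan is to take imaginary parts of a rearranged version of~\eqref{remainder}: since $\gamma_1\tensor c_N$ is self-adjoint its contribution will disappear, and the problem reduces to (i) a Kadison--Schwarz-type lower bound $\Im\,\E[R_N(\beta)]^{-1} \geq \Im\beta\tensor I_N$, and (ii) a norm-estimate for the error coming from~\eqref{Eerror}.

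Since $\|\delta_1(\beta)\|=O(1/(N^{1/2}\eta^4))$ by~\eqref{Eerror}, we have $\|\delta_1(\beta)\|<1/2$ once $N^{1/2}\eta^4$ is large. Then $I+\delta_1(\beta)$ is invertible, and the finite-dimensional product $(\omega_1^{(N)}(\beta)\tensor I_N-\gamma_1\tensor c_N)\,\E[R_N(\beta)]=I+\delta_1(\beta)$ forces both factors to be invertible, yielding in particular the invertibility claimed in the remark. Right-multiplying by $\E[R_N(\beta)]^{-1}$ and taking imaginary parts gives
$$\Im\omega_1^{(N)}(\beta)\tensor I_N \;=\; \Im\,\E[R_N(\beta)]^{-1} + \Im\bigl[\delta_1(\beta)\,\E[R_N(\beta)]^{-1}\bigr].$$

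For the main term, self-adjointness of $H_N$ gives $-\Im R_N(\beta)=R_N(\beta)(\Im\beta)R_N(\beta)^*$, and Kadison--Schwarz for the unital completely positive map $\E$ (applied to $A=(\Im\beta)^{1/2}R_N(\beta)^*$) yields $-\Im\,\E[R_N(\beta)] \geq \E[R_N(\beta)](\Im\beta)\E[R_N(\beta)]^*$. Combining with the elementary identity $\Im M^{-1}=-M^{-1}(\Im M)M^{-*}$ at $M=\E[R_N(\beta)]$, the factors $\E[R_N]^{-1}\E[R_N]=I$ and $\E[R_N]^*\E[R_N]^{-*}=I$ cancel to produce the clean bound $\Im\,\E[R_N(\beta)]^{-1} \geq \Im\beta\tensor I_N$.

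For the perturbation, reading the rearranged identity as a formula for $\E[R_N(\beta)]^{-1}$ and using the definitions of $\omega_1^{(N)}$ and $f_1^{(N)}$ together with $\|R_N(\beta)\|\leq 1/\eta$ and the previously established $\|(\E G_{H_N}(\beta))^{-1}\|=O(1/\eta)$, one gets $\|\omega_1^{(N)}(\beta)\|=O(1/\eta^2)$, hence $\|\E[R_N(\beta)]^{-1}\|=O(1/\eta^2)$. Combined with~\eqref{Eerror} this gives $\|\Im[\delta_1(\beta)\,\E[R_N(\beta)]^{-1}]\|=O(1/(N^{1/2}\eta^6))$. Substituting back yields $\Im\omega_1^{(N)}(\beta)\tensor I_N \geq \Im\beta\tensor I_N - (c/(N^{1/2}\eta^6))\,I_n\tensor I_N$; since every matrix here is of the form $(\cdot)\tensor I_N$, the inequality descends to $\Im\omega_1^{(N)}(\beta)\geq\Im\beta - (c/(N^{1/2}\eta^6))\,I_n$ in $M_n(\C)$, and strict positivity giving $\omega_1^{(N)}(\beta)\in\bbH^+(M_n(\C))$ follows once $N^{1/2}\eta^6$ is large. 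The only subtle step is the bound on $\|\E[R_N(\beta)]^{-1}\|$: it has no direct a priori control and has to be extracted by feeding the rearranged identity back through itself using the explicit definition of $\omega_1^{(N)}$; the rest reduces to Kadison--Schwarz and the already-proved bound~\eqref{Eerror}.
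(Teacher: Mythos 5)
Your proof is correct and follows essentially the same route as the paper: both rearrange \eqref{remainder} into the form \eqref{OmegaExp}, invoke $\Im\,\E[R_N(\beta)]^{-1}\geq\Im\beta$, and control the error via $\|\delta_1(\beta)\|=O(N^{-1/2}\eta^{-4})$ together with $\|\omega_1^{(N)}(\beta)\|=O(\eta^{-2})$ (your $\delta_1(\beta)\E[R_N(\beta)]^{-1}$ is algebraically the same error term the paper writes as $[(I+\delta_1)^{-1}-I](\omega_1^{(N)}(\beta)-\gamma_1\tensor c_N)$). The only difference is that you additionally supply the Kadison--Schwarz derivation of $\Im\,\E[R_N(\beta)]^{-1}\geq\Im\beta\tensor I_N$, which the paper asserts without proof.
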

	\begin{proof}
		Suppose $N^{\frac{1}{2}}\eta^4$ is large enough such that $\|(\E G_H(\beta))^{-1}\E \Delta_1(\beta)\|<\frac{1}{2}$. By power series expansion, we know that if $\|X\|\leq \varepsilon<\frac{1}{2}$, then $\|(I+X)^{-1}-I\|\leq 2\varepsilon$. We have
			$$\|(I+(\E G_H(\beta))^{-1}\E\Delta_1(\beta))^{-1}-I\| = O\left(\frac{1}{N^{\frac{1}{2}}\eta^4}\right).$$
			Recall that $f_{1}^{(N)}(\beta) = M_n(\tr)(R_N(\beta)\cdot \gamma_2\tensor d_N)$. We have the estimate
			$$\|\E f_{1}^{(N)}(\beta)\|\leq \E\|f_{1}^{(N)}(\beta)\|\leq \|\gamma_2\tensor d_N\|\cdot\E\|R_N(\beta)\| = O\left(\frac{1}{\eta}\right).$$
			Hence if $\beta\in \bbH^+(M_n(\C))$, $\|\beta\|\leq K$ and $\|(\Im\beta)^{-1}\|=\frac{1}{\eta}$, $\eta\in(0,1)$, then
			$$\|\omega_{1}^{(N)}(\beta)\|=\|\beta-(\E G_{H_N}(\beta))^{-1}\E f_{1}^{(N)}(\beta)\|=O\left(\frac{1}{\eta^2}\right).$$

			It follows that 
			$$\|((I+(\E G_{H_N}(\beta))^{-1}\E\Delta_1(\beta))^{-1}-I)(\omega_{1}^{(N)}(\beta)-\gamma_1\tensor c_N)\| = O\left(\frac{1}{N^{\frac{1}{2}}\eta^6}\right).$$
			
			Since $\Im([\E R_N(\beta)]^{-1})\geq \Im(\beta)$, we have, by looking at the expression \eqref{OmegaExp},
			$$\Im(\omega_{1}^{(N)}(\beta))\geq \Im\beta - \frac{c}{N^{\frac{1}{2}}\eta^6}.$$
			for some positive constant $c$.
			
			If $N^{\frac{1}{2}}\eta^6$ large enough, $\omega_{1}^{(N)}(\beta)\in \bbH^+(M_n(\C))$ because $\Im\beta\geq \varepsilon > 0$ for some $\varepsilon$.
		\end{proof}

	\begin{theorem}
		\label{FinEst}
		Suppose $\|\beta\|\leq K$,  $\beta\in \bbH^+(\C)$, $\|(\Im\beta)^{-1}\|=\frac{1}{\eta}$, $\eta\in(0,1)$.  If $N^{\frac{1}{2}}\eta^4$ is large enough, then the following estimates hold:
		\begin{enumerate}
			\item $\|\E R_N(\beta)-(\omega_{1}^{(N)}(\beta)-\gamma_1\tensor c_N)^{-1}\|=O\left(\frac{1}{N^{\frac{1}{2}}\eta^5}\right).$
			\item $\|\E G_H(\beta)-M_n(\tr)[(\omega_{1}^{(N)}(\beta)-\gamma_1\tensor c_N)^{-1}]\|=O\left(\frac{1}{N^{\frac{1}{2}}\eta^5}\right).$
			\end{enumerate}
		\end{theorem}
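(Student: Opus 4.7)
The plan is to read both estimates directly off the identity furnished by Proposition~\ref{error},
\[(\omega_{1}^{(N)}(\beta)-\gamma_1\tensor c_N)\,\E[R_N(\beta)] = I + \delta_1(\beta),\]
once I verify that the left factor is invertible with an inverse of controlled norm. The essential ingredients are already in the excerpt: the bound $\|\delta_1(\beta)\|=O(1/(N^{1/2}\eta^4))$ from \eqref{Eerror}, and the lower bound $\Im\omega_1^{(N)}(\beta) \geq \Im\beta - c/(N^{1/2}\eta^6)$ from Proposition~\ref{Impart}. So essentially no new analysis is required; the argument is mostly a rearrangement.

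First I would pin down the invertibility. Interpreting ``$N^{1/2}\eta^4$ large enough'' as large enough so that the correction $c/(N^{1/2}\eta^6)$ is at most $\eta/2$, Proposition~\ref{Impart} together with $\Im\beta \geq \eta\, I_n$ yields $\Im\omega_{1}^{(N)}(\beta)\geq (\eta/2)\,I_n$. Because $\gamma_1\tensor c_N$ is self-adjoint, the operator $\omega_{1}^{(N)}(\beta)\tensor I_N - \gamma_1\tensor c_N$ then has imaginary part bounded below by $\eta/2$, hence is invertible with operator norm of its inverse at most $2/\eta$ (the standard resolvent bound in the upper half plane of a $C^*$-algebra). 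I would then left-multiply the identity from Proposition~\ref{error} by this inverse to obtain
\[\E R_N(\beta) - (\omega_{1}^{(N)}(\beta)-\gamma_1\tensor c_N)^{-1} = (\omega_{1}^{(N)}(\beta)-\gamma_1\tensor c_N)^{-1}\,\delta_1(\beta),\]
and taking norms yields claim~(1), with error of order $(2/\eta)\cdot O(1/(N^{1/2}\eta^4)) = O(1/(N^{1/2}\eta^5))$.

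Claim~(2) is then an immediate corollary. The map $M_n(\tr)$ is unital and completely positive (hence has operator norm $1$) and commutes with $\E$, so $M_n(\tr)[\E R_N(\beta)] = \E G_{H_N}(\beta)$ (I read the $G_H$ in the theorem statement as a typo for $G_{H_N}$, since $G_H$ is deterministic). Applying $M_n(\tr)$ to the difference estimated in claim~(1) and using the triangle inequality gives the same $O(1/(N^{1/2}\eta^5))$ bound. The only subtlety, and the closest thing to an obstacle, is the bookkeeping around ``large enough'': to obtain the resolvent bound $2/\eta$ I need a quantitative lower bound of order $\eta$ on $\Im\omega_1^{(N)}(\beta)$, not merely positivity, which slightly strengthens the implicit constants from Proposition~\ref{Impart} but adds no new mathematical content.
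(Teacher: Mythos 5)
Your final identity
\[
\E R_N(\beta)-(\omega_{1}^{(N)}(\beta)-\gamma_1\tensor c_N)^{-1}=(\omega_{1}^{(N)}(\beta)-\gamma_1\tensor c_N)^{-1}\,\delta_1(\beta)
\]
is exactly the one the paper uses, and part (2) via the contractivity of $M_n(\tr)$ is fine (as is your reading of $G_H$ as $G_{H_N}$). The gap is in how you obtain invertibility and the bound $\|(\omega_{1}^{(N)}(\beta)-\gamma_1\tensor c_N)^{-1}\|=O(1/\eta)$. You derive it from Proposition \ref{Impart} by asking that $c/(N^{1/2}\eta^{6})\leq\eta/2$, which is the condition $N^{1/2}\eta^{7}\gtrsim 1$. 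This does \emph{not} follow from the stated hypothesis that $N^{1/2}\eta^{4}$ is large (multiplying by $\eta^{3}\ll1$ can destroy largeness), and it actually \emph{fails} in the regime the theorem is used in later: for $\eta=N^{-1/12}\sqrt{\log N}$ one has $N^{1/2}\eta^{7}=N^{-1/12}(\log N)^{7/2}\to0$, while $N^{1/2}\eta^{4}$ and $N^{1/2}\eta^{6}$ do tend to infinity. So the step ``$\Im\omega_{1}^{(N)}(\beta)\geq(\eta/2)I_n$'' is not available under the theorem's hypotheses; this is more than bookkeeping.

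The paper avoids this by extracting the resolvent bound from the identity itself rather than from the imaginary part: by \eqref{Eerror}, $\|\delta_1(\beta)\|=\|\E[G_{H_N}(\beta)]^{-1}\E\Delta_1(\beta)\|=O(1/(N^{1/2}\eta^{4}))<\tfrac12$ once $N^{1/2}\eta^{4}$ is large, so $I+\delta_1(\beta)$ is invertible by Neumann series with $\|(I+\delta_1(\beta))^{-1}\|\leq2$, and then
\[
(\omega_{1}^{(N)}(\beta)-\gamma_1\tensor c_N)^{-1}=\E[R_N(\beta)]\,(I+\delta_1(\beta))^{-1},
\qquad
\|(\omega_{1}^{(N)}(\beta)-\gamma_1\tensor c_N)^{-1}\|\leq 2\|\E R_N(\beta)\|\leq \frac{2}{\eta}.
\]
Replacing your invertibility step by this one, the rest of your argument goes through verbatim and yields the claimed $O(1/(N^{1/2}\eta^{5}))$.
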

	\begin{proof}
		Equation $\eqref{remainder}$ says
		$$(\omega_{1}^{(N)}(\beta)-\gamma_1\tensor c_N)^{-1}=\E[R_N(\beta)] \left(I + \E[G_H(\beta)]^{-1}\E[\Delta(\beta)]\right)^{-1}.$$
		By Equation \eqref{Eerror}, $\|(\E G_{H_N}(\beta))^{-1}\E\Delta_1(\beta)\| = O\left(\frac{1}{N^{\frac{1}{2}}\eta^4}\right)$. If $\|\E[G_{H_N}(\beta)]^{-1}\E[\Delta_1(\beta)]\|<\frac{1}{2}$, then using power series expansion to $\left(I + \E[G_{H_N}(\beta)]^{-1}\E[\Delta_1(\beta)]\right)^{-1}$, we have
		$$\|(\omega_{1}^{(N)}(\beta)-\gamma_1\tensor c_N)^{-1}\| = O\left(\frac{1}{\eta}\right).$$
		It follows that the error term $\delta_1(\beta)$ from proposition \ref{error} satisfies
		$$\|\E R_N(\beta)-(\omega_{1}^{(N)}(\beta)-\gamma_1\tensor c_N)^{-1}\|\leq  \|(\omega_{1}^{(N)}(\beta)-\gamma_1\tensor c_N)^{-1}\|\|\E[G_{H_N}(\beta)]^{-1}\E \Delta_1(\beta)\|=O\left(\frac{1}{N^{\frac{1}{2}}\eta^5}\right),$$
		which is the first assertion.
		The second assertion follows directly from $M_n(\tr)$ is continuous.
		\end{proof}

			We now extend the notion of smoothness in \cite{Kargin2015}  to $M_n(\C)$-valued case. The following reduces to the smoothness condition in \cite{Kargin2015} when $n=1$.

		\begin{definition}
			\label{reg}
			Suppose that $c$, $d$ are freely independent random variables, with corresponding distributions $\mu_c$ and $\mu_d$, and $\gamma_1, \gamma_2\in\M_n(\C)$ are self-adjoint matrices. Let
			$$H = \gamma_1\tensor c + \gamma_2\tensor d\in M_n(\A)$$
			be a sum of $M_n(\C)$-valued free variables, equipped with the conditional expectation $M_n(\tau)$. 
			
			We say that the triple $(\mu_c, \mu_d, H)$ is regular on $(S,V)$, where $S\subseteq \partial\bbH^+(M_n(\C))$, $V$ is a subspace of Hermitian matrices,  if the restrictions of $\omega_1$ and $\omega_2$ to $\{x+iy\in\bbH^+(M_n(\C)): x\in S, y\in V\}$ extend continuously to $\partial\{x+iy\in\bbH^+(M_n(\C)): x\in \bar{S}, y\in V\}$ and if the triple $(\mu_c, \mu_d, H)$ satisfies the following two conditions:
				\begin{enumerate}
				\item[A.] For all $\bar{\beta}\in \bar{S}$, $\omega_1(\bar{\beta})-t\gamma_1$ is invertible for every $t\in \textrm{supp}\,\mu_c$ and $\omega_2(\bar{\beta})-t\gamma_2$ is invertible for every $t\in \textrm{supp}\,\mu_d$;
				\item[B.] For each $(w_1,w_2):=(\omega_1(\bar{\beta}), \omega_2(\bar{\beta}))$, for $\bar{\beta}\in \bar{S}$, the map from $M_n(\C)\times M_n(\C)$ to $M_n(\C)\times M_n(\C)$ given by
				$$(b_1,b_2) \mapsto \begin{pmatrix}
					-(w_1+w_2-\beta)^{-1}(b_1+b_2)(w_1+w_2-\beta)^{-1}+\int_\R(w_1-t\gamma_1)^{-1}b_1(w_1-t\gamma_1)^{-1}\mu_{c}(dt)\\
					-(w_1+w_2-\beta)^{-1}(b_1+b_2)(w_1+w_2-\beta)^{-1}+\int_\R(w_2-t\gamma_2)^{-1}b_2(w_2-t\gamma_2)^{-1}\mu_{d}(dt)
				\end{pmatrix}$$
			is invertible.
			\end{enumerate}
			\end{definition}
		\begin{remark}
			We will take $S = \{z\cdot e_{1,1}-\gamma_0: z = x+iy, x\in I, y\in(0, \eta]\}$ for a fixed interval and $V=\R$ in Section \ref{MainResult} in which we apply the linearization technique to polynomials.
			\end{remark}
		
		Given two probability measures $\mu$ and $\nu$, we use the L\'evy distance
		\[d_L(\mu, \nu) = \inf\{s\geq 0: F_\nu(x-s)-s\leq F_\mu(x)\leq F_\nu(x+s)+s\textrm{ for all } x\in\R\}\]
		to measure the distance between $\mu$ and $\nu$, where $F_\mu$ and $F_\nu$ are the cumulative distribution functions of $\mu$ and $\nu$ respectively. A sequence $\mu_n$ of probability measures converges weakly to $\mu$, if and only if $d_L(\mu_n, \mu)\to 0$ (See Theorem III.1.2 on page 314 and Exercise III.1.4 on page 316 in \cite{ShuryaevBook})
				
		We set $\textrm{LD}(c_N, d_N) := \max\{d_L(\mu_{c_N},\mu_c), d_L(\mu_{d_N},\mu_d)\}$. Note that even though $d_N$ is a random matrix, the law $\mu_{d_N}$ is deterministic, since $d_N$ is a unitary conjugate of the deterministic self-adjoint diagonal matrix $D_N$. In our random matrix model, $(c_N, d_N)\to (c,d)$ strongly in distribution, so, in particular, $\textrm{LD}(c_N, d_N)\to0$.  Denote by $\varepsilon_1^{(N)}(\beta)=(\omega_1(\beta)-\gamma_1\tensor c)^{-1}\delta_1(\beta)$  $\varepsilon_2^{(N)}(\beta) = (\omega_2(\beta)-\gamma_1\tensor d)^{-1}\delta_2(\beta)$where $\delta_1$ is as in \eqref{delta1} and $\delta_2$ is defined analogously, and \[\varepsilon^{(N)}(\beta) = \|(\varepsilon_1^{(N)}(\beta)\|+ \|(\varepsilon_2^{(N)}(\beta)\|.\] 
		
		For convenience, we introduce the notation $\mathscr{R}_\eta := \{x+iy: x\in S, y\in V, 0< y\leq \eta\}$ where $\eta>0$. Although $\mathscr{R}_\eta$ depends on $V$ too, there is no ambiguity because $V$ does not change throughout the proof, or in the application to linearization.
		
		When $X$ and $Y$ are Banach spaces, we denote by $B(X,Y)$ the space of all bounded linear operators from $X$ to $Y$.
		
		To prove the next proposition, we need the following version of \cite[Lemma 2.1]{Kargin2015} to use L\'evy distance to approximate integrals. This version of the lemma only considers integration on a compact interval, instead of on the real line. This small change is due to the fact that we will integrate a continuous but possibly unbounded function on $\R$ against a compactly supported measure; integrating over a compact interval poses no question of integrability. 
		
		Since $(c_N, d_N)\to (c,d)$ strongly in distribution, $\textrm{supp} \:\mu_{c_N}\to \textrm{supp} \:\mu_c$ and $\textrm{supp}\: \mu_{d_N}\to \textrm{supp} \:\mu_d$ in the Hausdorff metric. Consequently, there is an $M>0$ such that for large enough $N$, $\mu_c, \mu_d, \mu_{c_N}$ and $\mu_{d_N}$ are all supported in $[-M, M]$.
		\begin{lemma}
			\label{LevyEst}
			Suppose that $\mu$ and $\nu$ are probability measures supported in $[-M, M]$. Suppose also that $d_L(\mu, \nu)=l$. Assume that $h$ is a $C^1$ real-valued function such that $\int_{-M}^M|h(u)\,du|<\infty$ and $\int_{-M}^M|h'(u)|\,du<\infty$. Then
			\[\int_{-M}^M |h(u)[F_\nu(\eta u)-F_\mu(\eta u)]|\,du\leq cl\max\{1, \eta^{-1}\}\]
			where $c>0$ can be chosen to be \[c = \max\left\{\sup_{\xi\in[-M, M]}2|h(\xi)|, \int_{-M}^M |h'(\xi)|\,d\xi\right\}.\]
			\end{lemma}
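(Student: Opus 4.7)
The plan is to convert the L\'evy-distance hypothesis into a pointwise estimate on the cumulative distribution functions, and then split the target integral into two pieces that can be handled by Fubini and by an integration-by-parts-style rearrangement. By definition of the L\'evy distance, $d_L(\mu,\nu) = l$ gives
\[
F_\nu(y-l) - l \leq F_\mu(y) \leq F_\nu(y+l) + l \quad\text{for all } y\in\R,
\]
and combined with monotonicity of $F_\nu$ this yields the pointwise bound $|F_\nu(y)-F_\mu(y)| \leq F_\nu(y+l)-F_\nu(y-l)+l$. Substituting $y=\eta u$ and integrating against $|h(u)|$ bounds the target integral above by $I + l\int_{-M}^M |h(u)|\,du$, where
\[
I := \int_{-M}^M |h(u)|\,(F_\nu(\eta u + l) - F_\nu(\eta u - l))\,du.
\]

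The term $I$ is then controlled by Fubini. Writing $F_\nu(\eta u+l)-F_\nu(\eta u-l) = \int \mathbf{1}_{[\eta u-l,\eta u+l]}(v)\,d\nu(v)$ and swapping the order of integration gives
\[
I = \int \Bigl(\int_{(v-l)/\eta}^{(v+l)/\eta} |h(u)|\,du\Bigr)\,d\nu(v) \leq \frac{2l}{\eta}\,\sup_{\xi\in[-M,M]}|h(\xi)|,
\]
since the inner interval has length $2l/\eta$ and $\nu$ is a probability measure. This already accounts for the $2\sup|h|$ part of the claimed constant and produces the $\eta^{-1}$ factor.

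To bring in $\int|h'|$ instead of $\int|h|$ in the remaining piece, I would apply integration by parts to the \emph{signed} quantity $\int_{-M}^M h(u)(F_\nu(\eta u)-F_\mu(\eta u))\,du$ using the primitive $P(u) = \int_{-M}^u (F_\nu(\eta t)-F_\mu(\eta t))\,dt$. A separate estimate $\|P\|_\infty = O(l\max\{1,\eta^{-1}\})$ follows by applying the same two-sided L\'evy inequality inside the $P$-integral, together with the change of variables $s=\eta t$ and the fact that $|F_\nu-F_\mu|$ is supported in $[-M,M]$. The resulting boundary term is controlled by $\sup|h|\cdot \|P\|_\infty$, and the interior integral by $\int|h'|\cdot\|P\|_\infty$, producing a bound of the right shape $cl\max\{1,\eta^{-1}\}$ with $c$ involving exactly $\sup|h|$ and $\int|h'|$.

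The main obstacle is the absolute value placed \emph{inside} the integral on the left-hand side, which prevents a direct integration by parts: one controls $|\int h(F_\nu-F_\mu)|$ naturally, not $\int |h(F_\nu-F_\mu)|$. The technical heart of the proof is therefore reconciling the Fubini bound on $I$ (which genuinely needs the pointwise estimate with absolute values) with the integration-by-parts bound (which only handles the signed version), and combining both so that the final constant is the claimed $c = \max\{2\sup|h|, \int|h'|\}$ rather than a weaker expression involving $\int|h|$ or the length $M$.
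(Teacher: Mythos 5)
Your first two steps are the standard (Kargin-style) argument and they are correct: the two-sided L\'evy inequality gives $|F_\nu(y)-F_\mu(y)|\leq F_\nu(y+l)-F_\nu(y-l)+l$, and Fubini bounds $\int_{-M}^M|h(u)|\,\nu((\eta u-l,\eta u+l])\,du$ by $2l\eta^{-1}\sup|h|$, which is exactly the $2\sup|h|$ contribution to $c$. (The paper itself supplies no proof of this lemma, quoting it as a variant of Kargin's Lemma 2.1, so this is the natural route.) The problem is the leftover term $l\int_{-M}^M|h(u)|\,du$: you correctly isolate it but never absorb it into the claimed constant, and your closing paragraph names the ``reconciliation'' of the Fubini bound with an integration-by-parts bound as the technical heart of the proof without carrying it out. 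As written the argument is therefore incomplete.

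Moreover, that reconciliation cannot be carried out: with $c=\max\{2\sup|h|,\int|h'|\}$ the statement is false. Take $\eta=1$ and $h\equiv 1$ on $[-M,M]$ (so $c=2$), and let $\mu=\frac1n\sum_k\delta_{x_k}$, $\nu=\frac1n\sum_k\delta_{x_k+a}$ with the $x_k\in[-M,M]$ separated by much more than $a$ and $a>2/n$. Then $d_L(\mu,\nu)=\min\{a,1/n\}=1/n=l$, while $\int_{-M}^M|F_\nu-F_\mu|=a>2l$. Indeed, for $h\equiv1$ the left-hand side is the $L^1$ distance of the cumulative distribution functions (the Wasserstein-$1$ distance), which is not controlled by the L\'evy distance without a factor of $M$, equivalently of $\int_{-M}^M|h|$. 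Your integration-by-parts fallback does not rescue this: the primitive $P(u)=\int_{-M}^u(F_\nu(\eta t)-F_\mu(\eta t))\,dt$ satisfies only $\|P\|_\infty=O(l\eta^{-1}+Ml)$, not $O(l\max\{1,\eta^{-1}\})$, by the same example. What your first two steps actually prove is the inequality with $c=2\sup_{\xi\in[-M,M]}|h(\xi)|+\int_{-M}^M|h(u)|\,du$; that weaker but correct form is all that is used where the lemma is invoked in the proof of Proposition \ref{Newton}, so you should state and prove that version rather than chase the constant as printed.
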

		We use Newton's method to estimate how far $\omega_{1}^{(N)}$ and $\omega_{2}^{(N)}$ are from the subordination functions $\omega_1$ and $\omega_2$ in the limit.
		\begin{proposition}
			\label{Newton}
			Assume that $(\mu_c, \mu_d, H)$ is regular on $(S, V)$, and that $\bar{S}$ is compact. Then for some positive $\bar{\varepsilon}$, $\bar{t}$, $\bar{\eta}$, if $\varepsilon:=\varepsilon^{(N)}(\beta)\leq \bar{\varepsilon}$, $l:=\textrm{LD}(c_N, d_N)\leq \bar{l}$, and $\beta\in \mathscr{R}_{\bar{\eta}}$, we have
			$$\max\{\|\omega_1(\beta)-\omega_{1}^{(N)}(\beta)\|,\|\omega_2(\beta)-\omega_{2}^{(N)}(\beta)\|\}=O(\varepsilon+l)$$
			where the constant in the $O$ depends only on $c$, $d$, $\gamma_1$, $\gamma_2$.
			\end{proposition}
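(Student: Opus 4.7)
The plan is to apply the Kantorovich version of Newton's method (stated in Section 2.4) to the nonlinear operator $F\colon M_n(\C)\times M_n(\C)\to M_n(\C)\times M_n(\C)$ defined, for fixed parameter $\beta$, by
\[
F(w_1, w_2) = \begin{pmatrix} G_c(w_1) - (w_1+w_2-\beta)^{-1} \\ G_d(w_2) - (w_1+w_2-\beta)^{-1}\end{pmatrix}.
\]
By Theorem \ref{Subordination}, the pair $(\omega_1(\beta),\omega_2(\beta))$ is the unique zero of $F$ in $\bbH^+(M_n(\C))\times\bbH^+(M_n(\C))$. I take $(\omega_1^{(N)}(\beta),\omega_2^{(N)}(\beta))$ as the initial guess, verify the Kantorovich hypotheses uniformly in $\beta\in\mathscr{R}_{\bar\eta}$, and then read off $\|\omega_j(\beta) - \omega_j^{(N)}(\beta)\| = O(\varepsilon + l)$ from Kantorovich's radius $\frac{1-\sqrt{1-2h_0}}{h_0}\delta_0 = O(\delta_0)$.

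For the residual, the defining identity $\omega_1^{(N)}+\omega_2^{(N)}-\beta = (\E G_{H_N}(\beta))^{-1}$ gives the exact equality $(\omega_1^{(N)}+\omega_2^{(N)}-\beta)^{-1} = \E G_{H_N}(\beta)$. Rewriting \eqref{remainder} as $\E R_N(\beta) = (\omega_1^{(N)}-\gamma_1\tensor c_N)^{-1}(1 + \delta_1(\beta))$ and applying $M_n(\tr)$ yields $\E G_{H_N}(\beta) = G_{c_N}(\omega_1^{(N)}) + M_n(\tr)[(\omega_1^{(N)}-\gamma_1\tensor c_N)^{-1}\delta_1(\beta)]$. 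Hence the first row of $F(\omega_1^{(N)},\omega_2^{(N)})$ equals $[G_c(\omega_1^{(N)}) - G_{c_N}(\omega_1^{(N)})]$ minus this $\delta_1$-term; Lemma \ref{LevyEst} bounds the bracket by $O(l)$ (write $G_c(\omega_1^{(N)}) - G_{c_N}(\omega_1^{(N)}) = \int (\omega_1^{(N)} - t\gamma_1)^{-1}\,d(\mu_c - \mu_{c_N})(t)$ and integrate by parts, using regularity condition A and compactness of $\bar{S}$ to keep $(\omega_1^{(N)}-t\gamma_1)^{-1}$ uniformly $C^1$ on $[-M,M]$), while the $\delta_1$-term is controlled by $\varepsilon_1^{(N)}$ up to a harmless perturbation of the prefactor $(\omega_1^{(N)}-\gamma_1\tensor c_N)^{-1}$ against $(\omega_1-\gamma_1\tensor c)^{-1}$. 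The second row is symmetric, so $\|F(\omega_1^{(N)},\omega_2^{(N)})\| = O(\varepsilon + l)$.

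A direct calculation shows that $DF(w_1,w_2)$ coincides, up to a sign, with the linear map in condition B of Definition \ref{reg}, so by regularity it is invertible at every $(w_1,w_2)=(\omega_1(\bar\beta),\omega_2(\bar\beta))$ with $\bar\beta\in\bar{S}$. Compactness of $\bar{S}$, continuity of the extensions of $\omega_1,\omega_2$, and continuity of inversion in the Banach algebra $B(M_n(\C)^2,M_n(\C)^2)$ give a uniform bound $\|DF(\omega_1(\beta),\omega_2(\beta))^{-1}\|\leq C_0$ on $\mathscr{R}_{\bar\eta}$. A Neumann-series perturbation extends this bound to a fixed operator-norm neighborhood of $(\omega_1(\beta),\omega_2(\beta))$; by Proposition \ref{Impart} and Theorem \ref{FinEst}, this neighborhood contains $(\omega_1^{(N)}(\beta),\omega_2^{(N)}(\beta))$ once $\bar\varepsilon,\bar l$ are small enough and $N^{1/2}\bar\eta^{6}$ is large enough. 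The second Fréchet derivative consists of integrals of triple resolvent products $(w_j-t\gamma_j)^{-1}\cdot\,\cdot\,(w_j-t\gamma_j)^{-1}$ against $\mu_c,\mu_d$, together with a triple in $(w_1+w_2-\beta)^{-1}$; condition A yields a uniform bound $\|D^2F\|\leq A$ on the same neighborhood.

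Combining these, $h_0 = C_0\delta_0 A = O(\varepsilon+l)\leq \tfrac{1}{2}$ once $\bar\varepsilon,\bar l$ are chosen small, and the Kantorovich ball is contained in the neighborhood where the above bounds hold. Newton's iteration then produces a zero $(w_1^*,w_2^*)$ of $F$ within distance $O(\varepsilon+l)$ of $(\omega_1^{(N)},\omega_2^{(N)})$, and the uniqueness in Theorem \ref{Subordination} forces $w_j^* = \omega_j(\beta)$, giving the asserted estimate. The main obstacle I anticipate is the uniform invertibility and norm control of $DF^{-1}$: condition B is postulated only at the boundary values $(\omega_1(\bar\beta),\omega_2(\bar\beta))$, so one must combine the continuous extension built into Definition \ref{reg}, compactness of $\bar{S}$, and a perturbation bound to propagate invertibility with a uniform norm to a full operator-norm neighborhood of the approximate pair.
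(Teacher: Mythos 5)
Your overall strategy (Kantorovich's form of Newton's method applied to the subordination fixed-point system, with residual $O(\varepsilon+l)$, derivative invertibility from condition B plus compactness, and a bounded second derivative) is the same as the paper's, but you have reversed the roles of the zero and the initial guess, and this reversal creates a genuine circularity. The paper takes $F(w_1,w_2)$ built from the \emph{finite-$N$} transforms $G_{c_N},G_{d_N}$ with the correction terms $\varepsilon_j^{(N)}(\beta)$ subtracted, so that the unknown pair $(\omega_1^{(N)},\omega_2^{(N)})$ is the zero and the \emph{known} pair $(\omega_1(\beta),\omega_2(\beta))$ is the initial guess; every Kantorovich hypothesis is then verified at $(\omega_1,\omega_2)$, exactly where Definition \ref{reg} applies: condition B gives invertibility of $F'$ there (perturbing $\mu_c\mapsto\mu_{c_N}$ costs only $O(l)$), condition A and compactness of $\bar S$ give the uniform resolvent bounds feeding Lemma \ref{LevyEst} and the bound on $F''$, and the residual is $G_c(\omega_1)-G_{c_N}(\omega_1)-\varepsilon_1^{(N)}=O(l+\varepsilon)$ via the exact identity $(\omega_1+\omega_2-\beta)^{-1}=G_c(\omega_1)$.

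In your setup every one of those hypotheses must instead be checked at $(\omega_1^{(N)},\omega_2^{(N)})$, and each check presupposes that this pair already lies in a fixed neighborhood of $(\omega_1,\omega_2)$: the uniform bound on $DF^{-1}$ is propagated from condition B only by a Neumann-series perturbation \emph{around} $(\omega_1(\beta),\omega_2(\beta))$; the Lemma \ref{LevyEst} bound on $G_c(\omega_1^{(N)})-G_{c_N}(\omega_1^{(N)})$ needs $(\omega_1^{(N)}-t\gamma_1)^{-1}$ uniformly bounded and $C^1$ in $t$, which condition A supplies only at $\omega_1(\bar\beta)$; and replacing $M_n(\tr)[(\omega_1^{(N)}-\gamma_1\tensor c_N)^{-1}\delta_1]$ by something controlled by $\varepsilon_1^{(N)}=(\omega_1-\gamma_1\tensor c)^{-1}\delta_1$ is not a ``harmless perturbation'' unless $\omega_1^{(N)}$ is already known to be near $\omega_1$. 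But that localization is precisely the conclusion of the proposition. The results you cite to supply it do not: Proposition \ref{Impart} only bounds $\Im\omega_1^{(N)}$ from below and gives $\|\omega_1^{(N)}\|=O(\eta^{-2})$, and Theorem \ref{FinEst} compares $(\omega_1^{(N)}-\gamma_1\tensor c_N)^{-1}$ with $\E R_N(\beta)$, not $\omega_1^{(N)}$ with $\omega_1$. (A secondary issue: identifying the limit of your Newton iteration with $(\omega_1(\beta),\omega_2(\beta))$ via ``uniqueness in Theorem \ref{Subordination}'' is not immediate either, since that theorem asserts uniqueness of the pair of analytic self-maps of $\bbH^+(M_n(\C))$, not pointwise uniqueness of solutions of the fixed-point equation at a single $\beta$; Kantorovich only gives uniqueness inside a ball you have not located.) To repair the argument you should swap the roles back: incorporate $\varepsilon_j^{(N)}$ into $F$, start Newton at $(\omega_1(\beta),\omega_2(\beta))$, and let the method find $(\omega_1^{(N)},\omega_2^{(N)})$.
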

		
		\begin{proof}
			Consider the function $F:M_n(\C)\times M_n(\C) \to M_n(\C)\times M_n(\C)$ defined by
			$$F(w_1, w_2) = 
			\begin{pmatrix}
			(w_1+w_2-\beta)^{-1}-G_{c_N}(w_1)-\varepsilon_{1}^{(N)}(\beta)\\
			(w_1+w_2-\beta)^{-1}-G_{d_N}(w_2)-\varepsilon_{2}^{(N)}(\beta)
			\end{pmatrix}.$$
			The function $F$ depends on $\beta$ but we will find bounds that are independent of $\beta\in\mathscr{R}_{\bar{\eta}}$ for some $\bar{\eta}$.
			We make the initial guess $w_0(\beta) = (\omega_1(\beta),\omega_2({\beta}))$ in Newton's method.  To simply our notations, we also use the notation $\omega_1$, $\omega_2$  for the continuous extensions of these functions to $\{x+iy\in \bbH^+(M_n(\C)): x\in \bar{S}, y\in V\}$.
			
			Since $(\mu_c, \mu_d, H)$ is regular on $(S, V)$ in the sense of Definition \ref{reg}, $\omega_1$ and $\omega_2$ are continuous on $\overline{\mathscr{R}}_1$, and $\omega_1(\beta)-t\gamma_1$ is invertible for all $\beta\in \overline{\mathscr{R}}_1$ and $t\in\textrm{supp}\:\mu_c$. Because $(c_N, d_N)\to (c, d)$ strongly in distribution, $\textrm{supp}\:\mu_{c_N}\to \textrm{supp}\:\mu_c$ in the Hausdorff metric. Since invertible matrices form an open set, $\omega_1(\bar{\beta})-t\gamma_1$ is invertible for all $\bar{\beta}\in \bar{S}$ and $t\in\textrm{supp}\:\mu_{c_N}$ for all large enough $N$. Similarly, $\omega_2(\bar{\beta})-t\gamma_2$ is invertible for all $\bar{\beta}\in \bar{S}$ and $t\in\textrm{supp}\:\mu_{d_N}$ for all large enough $N$. Because $M_n(\C)$ is finite dimensional, $\overline{\mathscr{R}}_1$ is compact.

			Since $\textrm{supp}\:\mu_c$ and $\overline{\mathscr{R}}_1$ are compact, we have $\|(\omega_1(\beta)-t\gamma_1)^{-1}\|\leq C$ for some $C$ independent of $\beta\in \overline{\mathscr{R}}_1$ and $t\in\textrm{supp}\:\mu_c$. Choosing a bigger $C$ if necessary, $\|(\omega_1(\beta)-t\gamma_1)^{-1}\|\leq C$ for some $C$ independent of $\beta\in \overline{\mathscr{R}}_1$ and $t\in\textrm{supp}\:\mu_{c_N}$ for all large enough $N$. Similarly, $\|(\omega_2(\beta)-t\gamma_2)^{-1}\|\leq C$ for some  (possibly still larger) $C$ independent of $\beta\in \overline{\mathscr{R}}_1$, $t\in\textrm{supp}\:\mu_d$ and $t\in\textrm{supp}\:\mu_{d_N}$ for large enough $N$. In particular, the entries satisfy
			\begin{equation}
			\label{CoeffBound}
			|((\omega_1(\beta)-t\gamma_1)^{-1})_{ij}|\leq C\textrm{ and } |(\omega_2(\beta)-t\gamma_2)^{-1})_{ij}|\leq C.
			\end{equation}
			
			
			The determinant of $\omega_j(\beta)-t\gamma_j$ is a polynomial in $t$ with degree at most $n$, and each entry of the matrix cofactor of $\omega_j(\beta)-t\gamma_j$ is a polynomial in $t$ with degree at most $n-1$. Each entry of $(\omega_j(\beta)-t\gamma_j)^{-1}$ can then be expressed as a rational function
			\[\frac{P_\beta(t)}{Q_\beta(t)}\]
			for some polynomials $P_\beta$ and $Q_\beta$ of degrees at most $n$ and $n-1$ respectively, and whose coefficients are continuous functions of $\beta$. When $N$ is large enough, by \eqref{CoeffBound}, $Q_\beta(t)$ are bounded away from $0$ for all $t\in\textrm{supp}\,\mu_{c_N}$ and $\beta\in\bar{S}$ when $j=1$, and $t\in\textrm{supp}\,\mu_{d_N}$ and $\beta\in\bar{S}$ when $j=2$. Suppose, for large enough $N$, $\mu_c, \mu_d, \mu_{c_N}$ and $\mu_{d_N}$ are all supported in $[-M, M]$. So, by integration by parts and using the fact that $F_{\mu_c}(M) = 1$, each entry of $\int_\R (\omega_1(\beta)-t\gamma_1)^{-1}\,\mu_c(dt)$ is of the form
			\begin{align*}
			\frac{P_\beta(M)}{Q_\beta(M)} - \int_{-M}^M \frac{Q_\beta(\lambda)P_\beta'(\lambda)-P_\beta(\lambda)Q_\beta'(\lambda)}{Q_\beta(\lambda)^2}\,F_{\mu_c}(\lambda)\,d\lambda
			\end{align*}
			It follows that
			\[G_c(\omega_1(\beta))-G_{c_N}(\omega_1(\beta)) = \int_{-M}^M \frac{Q_\beta(\lambda)P_\beta'(\lambda)-P_\beta(\lambda)Q_\beta'(\lambda)}{Q_\beta(\lambda)^2}\,[F_{\mu_{c_N}}(\lambda)-F_{\mu_c}(\lambda)]\,d\lambda.\]
			By Lemma \ref{LevyEst} and the fact that $Q_\beta(t)$ are bounded away from $0$ for all $t\in\textrm{supp}\,\mu_{c_N}$, $t\in\textrm{supp}\,\mu_c$ and $\beta\in\bar{S}$, there is a constant $c$ independent of $\beta$ such that
			\[\|G_c(\omega_1(\beta))-G_{c_N}(\omega_1(\beta))\|\leq cl.\]
			The similar estimate holds for $d_N$ and $d$ by symmetry. So we have
			\[\left\|\begin{pmatrix}
			G_c(\omega_1(\beta))-G_{c_N}(\omega_1(\beta))\\
			G_d(\omega_2(\beta))-G_{d_N}(\omega_2(\beta))
			\end{pmatrix}\right\|\leq Ml\]
			for some constant $M$.
			
			

			
			Now, by definition of $\varepsilon^{(N)}$, $\|(\varepsilon_{1}^{(N)}(\beta), \varepsilon_{2}^{(N)}(\beta))\|\leq \varepsilon^{(N)}(\beta)$ and we have
			$$\|F(w_0)\| = \left\|\begin{pmatrix}
			G_c(\omega_1(\beta))-G_{c_N}(\omega_1(\beta))-\varepsilon_{1}^{(N)}(\beta)\\
			G_d(\omega_2(\beta))-G_{d_N}(\omega_2(\beta))-\varepsilon_{2}^{(N)}(\beta)\\
			\end{pmatrix}\right\|=O(l+\varepsilon)$$
			where the constant term in the $O$-term is uniform for all $\beta\in \overline{\mathscr{R}}_1$.\\
			
			In order to apply Newton's method, we need to estimate the norm of the inverse of the derivative.
			It is easy to compute that the derivative                                                                                                                                                                                                                                                                                                                                                                                                                                                                                                                                                                                                                                                                                                                                                                                                                                                                                                                                                                                                                                                                                                                                                                                                                                                                                                                                                                                                                                                                                                                                                                                                                                                                                                                                                                                                                                                                                                                                                                                                                                                                                                                                                                                                                                                                                                                                                                                                                                                                                                                                                                                                                                                                                                                                                                                                                                                                                                                                                                                                                                                                                                                                                                                                                                                                                                                                                                                                                                                                                                                                                                                                                                                                                                                                                                                                                                                                                                                                                                                                                                                                                                                                                                                                                                                                                                                                                                                                                                                                                                                                                                                                                                                                                                                                                                                                                                                                                                                                                                                                                                                                                                                                                                                                                                                                                                                                                                                                                                                                                                                                                                                                                                                                                                                                                                                                                                                                                                                                                                                                                                                                                                                                                                                                                                                                                                                                                                                                                                                                                                                                                                                                                                                                                                                                                                                                                                                                                                                                                                                                                                                                                                                                                                                                                                                                                                                                                                                                                                                                                                                                                                                                                                                                                                                                                                                                                                                                                                                                                                                                                                                                                                                                                                                                                                                                                                                                                                                                                                                                                                                                                                                                                                                                                                                                                                                                                                                                                                                                                                                                                                                                                                                                                                                                                                                                                                                                                                                                                                                                                                                                                                                                                                                                                                                                                                                                                                                                                                                  $F'_{w_1,w_2}\in B(M_n(\C)^2,M_n(\C)^2)$ is given by
			$$F'_{w_1,w_2}(b_1,b_2) = \begin{pmatrix}
			-(w_1+w_2-\beta)^{-1}(b_1+b_2)(w_1+w_2-\beta)^{-1}+\int_\R(w_1-t\gamma_1)^{-1}b_1(w_1-t\gamma_1)^{-1}\mu_{c_N}(dt)\\
			-(w_1+w_2-\beta)^{-1}(b_1+b_2)(w_1+w_2-\beta)^{-1}+\int_\R(w_2-t\gamma_2)^{-1}b_2(w_2-t\gamma_2)^{-1}\mu_{d_N}(dt)
			\end{pmatrix}$$
			which is invertible when $(w_1, w_2) = w_0(\bar{\beta})$, for $\bar{\beta}\in \bar{S}$ if $\bar{l}$ is chosen to be small enough so that $d_L(\mu_{c_N}, \mu_c)$ and $d_L(\mu_{d_N}, \mu_d)$ small, by the assumption (B) that $(\mu_c, \mu_d, H)$ is regular, and that invertible operators in the space $B(M_n(\C)^2, M_n(\C)^2)$ form an open set. It is evident that $F'$ is continuous, as a function of $(w_1, w_2)$, on $M_n(\C)\times M_n(\C)$. Since invertible operators on $M_n(\C)^2\to M_n(\C)^2$ form an open set and $\omega_1$, $\omega_2$ are continuous, there is an $\bar{\eta}>0$ such that $F'_{\omega_1(\beta), \omega_2(\beta)}$ is invertible for $\beta\in \overline{\mathscr{R}}_{\bar{\eta}}$. Since $M_n(\C)$ and $V$ are finite dimensional, $\overline{\mathscr{R}}_{\bar{\eta}}$ is compact. It follows that $\|[F'_{\omega_1(\beta), \omega_2(\beta)}]^{-1}\underline{}\|$ is bounded uniformly on $\overline{\mathscr{R}}_{\bar{\eta}}$.\\
			
			We now estimate the norm of the second derivative $F'':M_n(\C)^2\to B(M_n(\C)^2, B(M_n(\C)^2, M_n(\C)^2))$. The first entry of $F''_{w_1, w_2}(h_1, h_2)$is given by
			\begin{align*}
			(F''_{w_1, w_2}(h_1, h_2))_1(b_1, b_2) =& (w_1+w_2-\beta)^{-1}(h_1+h_2)(w_1+w_2-\beta)^{-1}(b_1+b_2)(w_1+w_2-\beta)^{-1}\\
				&+ (w_1+w_2-\beta)^{-1}(b_1+b_2)(w_1+w_2-\beta)^{-1}(h_1+h_2)(w_1+w_2-\beta)^{-1}\\
				&-\int_\R [(w_1-t \gamma_1)^{-1}h_1(w_1-t\gamma_1)^{-1}b_1(w_1-t\gamma_1)^{-1}\\
				&\quad+(w_1-t \gamma_1)^{-1}b_1(w_1-t\gamma_1)^{-1}h_1(w_1-t\gamma_1)^{-1}]\:\mu_{c_N}(dt).
			\end{align*}
			Then the norm of $\|(F''_{\omega_1(\beta), \omega_2(\beta)}(h_1, h_2))_1\|$ is bounded by
			$$\|(F''_{\omega_1(\beta), \omega_2(\beta)}(h_1, h_2))_1\|\leq 2(\|(\omega_1(\beta)+\omega_2(\beta)-\beta)^{-1}\|^3+\sup_{\beta, t}\|(\omega_1(\beta)-t\gamma_1)^{-1}\|^3)\|(h_1, h_2)\|$$
			which shows the norm in the space $L(M_n(\C)^2, L(M_n(\C)^2, M_n(\C)^2))$ of
			$$\|(F''_{\omega_1(\beta), \omega_2(\beta)})_1\|\leq 2\left(\|(\omega_1(\beta)+\omega_2(\beta)-\beta)^{-1}\|^3+\sup_{\beta, t}\|(\omega_1(\beta)-t\gamma_1)^{-1}\|^3\right)\leq 4C^3$$
			by the facts that $\sup_{\beta, t}\|(\omega_1(\beta)-t\gamma_1)^{-1}\|\leq C$ and $\|(\omega_1(\beta)+\omega_2(\beta)-\beta)^{-1}\| = \|G_c(\omega_1(\beta))\|\leq C$.
			A similar computation shows $\|(F''_{w_1, w_2})_2\|\leq 4C^3.$
			Since $\omega_1$ and $\omega_2$ are continuous on $\overline{\mathscr{R}}_1$, $\|F''_{\omega_1(\beta), \omega_2(\beta)}\|$ is bounded uniformly, say by $A$, in $\overline{\mathscr{R}}_{\bar{\eta}}$.
			
			We take $C_0 = \sup_{\beta\in \overline{\mathscr{R}}_{\bar{\eta}}}\|F'(w_0(\beta))\|$. Since $\|[F'(w_0(\beta))^{-1}]F(w_0(\beta)\|\leq C_0\|F(w_0(\beta))\|$, we take $\delta_0 =  C_0\sup_{\beta}\|F(w_0(\beta))\|$ which is $O(\varepsilon+l)$, with the constant in the $O$-term uniform in $\beta\in \overline{\mathscr{R}}_{\bar{\eta}}$. We can take small enough $\bar{l}$ and $\bar{\varepsilon}$ such that if $l\leq \bar{l}$ and $\varepsilon(\beta)\leq \bar{\varepsilon}$, then $h_0 = C_0\delta_0A<\frac{1}{2}$ so that Newton's method guarantees the unique solution is in a neighborhood of $w_0$. Newton's method indeed guarantees the solution $(w_1, w_2)$ satisfying
			$$\|w_1-\omega_1(\beta)\|=O(\varepsilon+l)\quad\textrm{ and }\quad\|w_2-\omega_2(\beta)\|=O(\varepsilon+l).$$
			
			The functions $\omega_{1}^{(N)}(\beta)$ and $\omega_{2}^{(N)}(\beta)$ is the zero of $F$, $F(w_1, w_2) = 0$. Also, \eqref{remainder} shows that $\omega_{1}^{(N)}(\beta)\to \omega_1(\beta)$ and $\omega_{2}^{(N)}(\beta)\to\omega_2(\beta)$ as $N\to \infty$. If follows that for small $\bar{l}$ and $\bar{\varepsilon}$, the solution of $F(w_1, w_2)=0$ found by Newton's method is the pair $(\omega_{1}^{(N)}(\beta), \omega_{2}^{(N)}(\beta))$ which proves for all $\beta\in \mathscr{R}_{\bar{\eta}}$ (but not $\overline{\mathscr{R}}_{\bar{\eta}}$),
			$$\|\omega_{1}^{(N)}(\beta)-\omega_1(\beta)\|=O(\varepsilon+l)\quad\textrm{ and }\quad\|\omega_{2}^{(N)}(\beta)-\omega_2(\beta)\|=O(\varepsilon+l).$$
			\end{proof}
		
		\subsection{Main Results}
		\label{MainResult}
			\subsubsection{A Local Limit Theorem for Polynomials}
			Now we move on to estimating the (scalar) Cauchy transform of a polynomial $P(c_N, d_N)$ in $c_N$ and $d_N$. We recall that $(c_N, d_N)\to (c,d)$ strongly in distribution. The polynomial $P$ possesses a linearization as in Section \ref{Linear} which gives a linear polynomial with coefficients in $M_n(\C)$
			$$L_N = \gamma_0+\gamma_1\tensor c_N+\gamma_2\tensor d_N.$$
			We apply the results from the preceding section to $H_N = \gamma_1\tensor c_N + \gamma_2\tensor d_N$ and its large-$N$ limit $H = \gamma_1\tensor c+\gamma_2\tensor d$.

			\begin{lemma}
				\label{ExpEst}
				Assume that $(\mu_c, \mu_d, H)$ is regular on $(S, V)$, and that $\bar{S}$ is compact. Then for some sufficiently small $\varepsilon = \varepsilon(\beta)$, $l = LD(c_N, d_N)$, and $\eta>0$, we have
				$$\|\E G_{H_N}(\beta)- G_{H}(\beta)\| = O(\varepsilon+l)$$
				where $\Re\beta\in S$, $\Im\beta\in V$, $\Im\beta>0$, $\|(\Im\beta)^{-1}\|=\frac{1}{\eta}$.
				\end{lemma}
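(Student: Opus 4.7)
The plan is to telescope via two intermediate quantities and invoke the tools already assembled. Using the subordination identity $G_H(\beta) = G_c(\omega_1(\beta))$ from Theorem \ref{Subordination} and the definition $G_{c_N}(b) = M_n(\tr)[(b-\gamma_1\tensor c_N)^{-1}]$, I would decompose
\[
\E G_{H_N}(\beta) - G_H(\beta) = T_1 + T_2 + T_3,
\]
with $T_1 = \E G_{H_N}(\beta) - G_{c_N}(\omega_1^{(N)}(\beta))$, $T_2 = G_{c_N}(\omega_1^{(N)}(\beta)) - G_{c_N}(\omega_1(\beta))$, and $T_3 = G_{c_N}(\omega_1(\beta)) - G_c(\omega_1(\beta))$.

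The term $T_1$ is precisely what Theorem \ref{FinEst}(2) controls, yielding $\|T_1\|=O(1/(N^{1/2}\eta^5))$. Since \eqref{Eerror} together with the uniform bound on $\|(\omega_1(\beta)-\gamma_1\tensor c)^{-1}\|$ supplied by regularity condition A shows that $\varepsilon^{(N)}(\beta)$ is of order $1/(N^{1/2}\eta^4)$, this contribution fits within $O(\varepsilon)$ once $\eta\geq\bar\eta$ is treated as a fixed parameter. For $T_3$, I would repeat the L\'evy-distance argument already carried out inside the proof of Proposition \ref{Newton}: each entry of $(\omega_1(\beta)-t\gamma_1)^{-1}$ is a rational function $P_\beta(t)/Q_\beta(t)$ whose denominator stays bounded away from zero on $[-M,M]$ uniformly in $\beta\in\overline{\mathscr{R}}_{\bar\eta}$ (by compactness of $\overline{\mathscr{R}}_{\bar\eta}$, regularity A, and Hausdorff convergence of the supports), so integration by parts against $\mu_c$ and $\mu_{c_N}$ followed by Lemma \ref{LevyEst} delivers $\|T_3\|=O(l)$.

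For $T_2$, I would use the resolvent identity
\[
G_{c_N}(b_1) - G_{c_N}(b_2) = M_n(\tr)\bigl[(b_1 - \gamma_1\tensor c_N)^{-1}(b_2 - b_1)(b_2 - \gamma_1\tensor c_N)^{-1}\bigr],
\]
which gives $\|T_2\| \leq C^2\,\|\omega_1^{(N)}(\beta) - \omega_1(\beta)\|$ whenever both resolvents are bounded by a common constant $C$, and then invoke Proposition \ref{Newton} to bound the last quantity by $O(\varepsilon+l)$. Summing the three pieces yields the advertised estimate.

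The step I expect to be the main obstacle is securing the uniform resolvent bound $\|(\omega_1^{(N)}(\beta) - \gamma_1\tensor c_N)^{-1}\| \leq C$ needed for $T_2$. The companion bound at the pair $(\omega_1(\beta),c_N)$ follows by combining regularity condition A, compactness of $\overline{\mathscr{R}}_{\bar\eta}$, and the Hausdorff convergence of $\textrm{supp}\,\mu_{c_N}$ to $\textrm{supp}\,\mu_c$; openness of the set of invertible matrices together with the estimate $\|\omega_1^{(N)}(\beta)-\omega_1(\beta)\|=O(\varepsilon+l)$ from Proposition \ref{Newton} then transfers the bound to $\omega_1^{(N)}(\beta)$, provided $\varepsilon$ and $l$ are chosen small enough. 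Once this uniform invertibility is in hand, combining $T_1$, $T_2$, $T_3$ produces $\|\E G_{H_N}(\beta)-G_H(\beta)\|=O(\varepsilon+l)$ as claimed.
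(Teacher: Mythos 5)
Your argument is correct, but it follows a genuinely different route from the paper's. The paper exploits the exact algebraic identities $\E G_{H_N}(\beta) = (\omega_{1}^{(N)}(\beta)+\omega_{2}^{(N)}(\beta)-\beta)^{-1}$ (which holds by the very definition of the approximate subordination functions) and $G_H(\beta)=(\omega_1(\beta)+\omega_2(\beta)-\beta)^{-1}$, so that the whole difference collapses into a single resolvent identity
\[
\E G_{H_N}(\beta)-G_H(\beta) = (\omega_{1}^{(N)}+\omega_{2}^{(N)}-\beta)^{-1}\bigl[(\omega_1-\omega_{1}^{(N)})+(\omega_2-\omega_{2}^{(N)})\bigr](\omega_1+\omega_2-\beta)^{-1},
\]
with the outer factors bounded uniformly (the limiting one via the subordination relation $(\omega_1+\omega_2-\beta)^{-1}=\int_\R(\omega_2-t\gamma_2)^{-1}\mu_d(dt)$ and regularity, the $N$-dependent one by perturbation using Proposition \ref{Newton}) and the middle factor $O(\varepsilon+l)$ by Proposition \ref{Newton}. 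Your telescope through $G_{c_N}(\omega_1^{(N)}(\beta))$ and $G_{c_N}(\omega_1(\beta))$ reaches the same conclusion but requires two ingredients the paper's proof of this lemma avoids: a second run of the L\'evy-distance argument (Lemma \ref{LevyEst}) for $T_3$, which the paper only needs inside Proposition \ref{Newton}, and the uniform invertibility of $\omega_1^{(N)}(\beta)-\gamma_1\tensor c_N$, which you correctly single out as the main obstacle and resolve by regularity A, compactness, Hausdorff convergence of supports, and openness of the invertibles. Both proofs ultimately rest on the same core input, Proposition \ref{Newton}, so neither buys substantially more generality; the paper's version is simply shorter because the $T_3$-type term never appears.

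One justification should be tightened. To conclude $\|T_1\|=O(\varepsilon)$ you invoke Theorem \ref{FinEst}(2) to get $\|T_1\|=O(N^{-1/2}\eta^{-5})$ and then note that $\varepsilon^{(N)}(\beta)$ is ``of order'' $N^{-1/2}\eta^{-4}$; but \eqref{Eerror} supplies only an \emph{upper} bound on $\varepsilon^{(N)}(\beta)$, so this comparison runs in the wrong direction and does not yield $N^{-1/2}\eta^{-5}=O(\varepsilon)$. The fix stays entirely inside your framework: rearranging \eqref{remainder} gives $T_1=M_n(\tr)\bigl[(\omega_{1}^{(N)}(\beta)-\gamma_1\tensor c_N)^{-1}\delta_1(\beta)\bigr]$, and since $\delta_1(\beta)=(\omega_1(\beta)-\gamma_1\tensor c)\,\varepsilon_1^{(N)}(\beta)$ with $\|\omega_1(\beta)-\gamma_1\tensor c\|$ bounded on the compact $\overline{\mathscr{R}}_{\bar\eta}$, the same uniform resolvent bound you establish for $T_2$ gives $\|T_1\|\leq C\varepsilon$ directly. (Also, the condition you state as ``$\eta\geq\bar\eta$'' should read $\eta\leq\bar\eta$, consistent with $\beta\in\mathscr{R}_{\bar\eta}$ in Proposition \ref{Newton}.)
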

			\begin{proof}
				Because $\E G_{H_N}(\beta) = (\omega_{1}^{(N)}(\beta)+\omega_{2}^{(N)}(\beta)-\beta)^{-1}$ and $G_{H}(\beta) = (\omega_1(\beta)+\omega_2(\beta)-\beta)^{-1}$, we see that
				\begin{align*}
				&\E[M_n(\tr)(R_N(\beta))]-M_n(\tau)(R(\beta))\\
				=& (\omega_{1}^{(N)}(\beta)+\omega_{2}^{(N)}(\beta)-\beta)^{-1}[(\omega_1-\omega_{1}^{(N)})(\beta)+(\omega_2-\omega_{2}^{(N)})(\beta)](\omega_1(\beta)+\omega_2(\beta)-\beta)^{-1}.
				\end{align*}
				By the subordination relation
				$$(\omega_1(\beta)+\omega_2(\beta)-\beta)^{-1} = \int_\R (\omega_2(\beta)-t\gamma_2)^{-1}\;\mu_{d}(dt),$$
				$(\mu_c, \mu_d, H)$ being regular implies that $(\omega_1(\beta)+\omega_2(\beta)-\beta)^{-1}$ is uniformly bounded for small enough $\eta>0$, $\Re\beta\in S$, $\Im\beta\in V$, $\Im\beta>0$, $\|(\Im\beta)^{-1}\|=\frac{1}{\eta}$.
				Proposition \ref{Newton} shows that $\omega_{1}^{(N)}$ and $\omega_{2}^{(N)}$ are close to $\omega_1$ and $\omega_2$ respectively, so when $N$ large, $(\omega_{1}^{(N)}(\beta)+\omega_{2}^{(N)}(\beta)-\beta)^{-1}$ is also bounded. The norm of the middle term $\|(\omega_c-\omega_{c_N})(\beta)+(\omega_d-\omega_{d_N})(\beta)\|$ is $O(\varepsilon+ l)$ by the conclusion of Proposition \ref{Newton}, provided that $\eta$, $\varepsilon$, $l$ are small enough.
				\end{proof}
			
			We now consider $\beta$ of the particular form $z e_{1,1}-\gamma_0+i\eta$, $z\in\bbH^+(\C)$. We first note that for these $\beta$, $\Im\beta =\textrm{diag}(\Im z+\eta,\eta\ldots,\eta)$. We are interested in the local behavior of the polynomial. So, we consider $z\in \Pi_{I, \kappa} = I+i(0, \kappa]$, a rectangle on the complex plane. This means when we apply Proposition \ref{Newton}, we will assume $H$ is regular in $(\Pi_{I, \kappa}\cdot e_{1,1}-\gamma_0, \R\cdot I_n)$, where 
			\[\Pi_{I, \kappa}\cdot e_{1,1}-\gamma_0=\{z\cdot e_{1,1}-\gamma_0: z\in \Pi_{I, \kappa}\}\]
			and $I_n$ is the $n\times n$ identity matrix.

			\begin{lemma}
				\label{supbound}
				For some positive constants $c_1$ and $c''$ which may depend on $|I|$ and $\kappa$ and for all $\delta>0$,
				$$\P\left(\sup\|G_{H_N}(\beta)-\E G_{H_N}(\beta)\|>\delta\right)\leq \exp\left(\frac{-c'' N\eta^4\delta^2}{\|\gamma_2|\|^2\|d_N\|^2}\right)$$
				where the supremum is taken over $\beta\in \Pi_{I, \kappa}\cdot e_{1,1}-\gamma_0+i\eta$, provided that $N\geq \frac{-c_1\log(\eta^4\delta^2)}{\eta^4\delta^2}$ for some $c_1$ large enough.
				\end{lemma}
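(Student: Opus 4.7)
The strategy is a standard $\varepsilon$-net argument. We convert the pointwise concentration estimate
$$\P\left(\|G_{H_N}(\beta)-\E G_{H_N}(\beta)\|>\varepsilon\right)\leq 2\exp\left(\frac{-cN\eta^4\varepsilon^2}{\|\gamma_2\|^2\|d_N\|^2}\right)$$
obtained in the proof of Lemma \ref{DeltaEst} into a supremum-in-$\beta$ estimate via an a priori Lipschitz bound on $\beta\mapsto G_{H_N}(\beta)-\E G_{H_N}(\beta)$ together with a union bound over a finite net of the two-real-dimensional parameter set.

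First I would establish the Lipschitz bound. Since the Fr\'echet derivative of $\beta\mapsto G_{H_N}(\beta)=M_n(\tr)[(\beta-H_N)^{-1}]$ applied to $h\in M_n(\C)$ equals $-M_n(\tr)[(\beta-H_N)^{-1}h(\beta-H_N)^{-1}]$, its norm is bounded by $\|(\Im\beta)^{-1}\|^2\|h\|\leq \|h\|/\eta^2$ by \cite[Equation 13]{BPV2012}, and the same bound applies to $\E G_{H_N}$ by linearity of expectation. Hence the random map $\beta\mapsto G_{H_N}(\beta)-\E G_{H_N}(\beta)$ is $(2/\eta^2)$-Lipschitz almost surely on the slab $\{\|(\Im\beta)^{-1}\|\leq 1/\eta\}$. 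The parameter set $\mathcal{P}=\{ze_{1,1}-\gamma_0+i\eta:z\in \Pi_{I,\kappa}\}$ is an affine copy of the two-real-dimensional rectangle $\Pi_{I,\kappa}$ of area at most $|I|\kappa$, so for any $\rho>0$ it admits a $\rho$-net $\{\beta_1,\ldots,\beta_M\}$ with $M\leq C_1 |I|\kappa/\rho^2$.

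I then choose $\rho=\eta^2\delta/8$ so that the Lipschitz estimate guarantees: if $\|G_{H_N}(\beta_j)-\E G_{H_N}(\beta_j)\|\leq \delta/2$ at every net point, then $\sup_{\beta\in\mathcal{P}}\|G_{H_N}(\beta)-\E G_{H_N}(\beta)\|\leq \delta$. Applying the pointwise inequality with $\varepsilon=\delta/2$ at each of the $M$ net points and taking a union bound yields
$$\P\left(\sup_{\beta\in\mathcal{P}}\|G_{H_N}(\beta)-\E G_{H_N}(\beta)\|>\delta\right)\leq \frac{2C_2}{\eta^4\delta^2}\exp\left(\frac{-c'N\eta^4\delta^2}{\|\gamma_2\|^2\|d_N\|^2}\right).$$
The only delicate (but mild) step is to absorb the polynomial prefactor $\eta^{-4}\delta^{-2}$ into the exponent: requiring $N\geq -c_1\log(\eta^4\delta^2)/(\eta^4\delta^2)$ with $c_1$ sufficiently large forces $\log(2C_2/(\eta^4\delta^2))\leq \tfrac{c'}{2\|\gamma_2\|^2\|d_N\|^2}\, N\eta^4\delta^2$, after which the right-hand side is dominated by $\exp(-c''N\eta^4\delta^2/(\|\gamma_2\|^2\|d_N\|^2))$ with $c''=c'/2$, producing the stated bound.
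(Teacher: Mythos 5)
Your proposal is correct and follows essentially the same route as the paper: a Lipschitz bound of order $\eta^{-2}$ from the derivative of the resolvent, an $\eta^{2}\delta$-scale net on $\Pi_{I,\kappa}$ of cardinality $O(|I|\kappa/(\eta^{4}\delta^{2}))$, a union bound using the pointwise concentration inequality, and absorption of the polynomial prefactor into the exponential via the hypothesis $N\geq -c_1\log(\eta^4\delta^2)/(\eta^4\delta^2)$. The only differences are cosmetic (net mesh $\eta^2\delta/8$ versus $\eta^2\delta/4$), and your accounting of the number of net points is if anything slightly more careful than the paper's.
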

			\begin{proof}
				It is easy to compute $\|G_{H_N}'(\beta)\|\leq \|(\Im\beta)^{-1}\|^2$ and $\| \E G_{H_N}'(\beta)\|\leq \|(\Im\beta)^{-1}\|^2$. 
				
				We create an $\frac{\eta^2\delta}{4}$-net on $\Pi_{I, \kappa}$, which requires us $O(|I|\cdot \kappa)$ number of points. If $\|G_{H_N}(\beta_0)-\E G_{H_N}(\beta_0)\|\leq \delta/2$ for every point $\beta_0$ on the net, then given any $\beta=z e_{1,1}-\gamma_0+i\eta$, choose $\beta_0$ such that $\|\beta-\beta_0\|\leq \frac{1}{\sqrt{2}}\frac{\eta^2\delta}{4}<\frac{\eta^2\delta}{4}$. Then
				\begin{align*}
				\|G_{H_N}(\beta)-\E G_{H_N}(\beta)\|\leq &\|G_{H_N}(\beta)-G_{H_N}(\beta_0)\|+\|G_{H_N}(\beta_0)-\E G_{H_N}(\beta_0)\|+\|\E G_{H_N}(\beta_0)-\E G_{H_N}(\beta)\|\\
				\leq & \|(\Im\beta_0)^{-1}\|^2\|\beta-\beta_0\|+\frac{\delta}{2}+\|(\Im\beta_0)^{-1}\|^2\|\beta-\beta_0\|\leq \delta.				
				\end{align*}
				Recall, by the concentration inequality, for all $\alpha\in\left(0,\frac{1}{2}\right)$, for every $\beta\in \bbH^+(M_n(\C))$, $\|\Im\beta^{-1}\|=\frac{1}{\eta}$, we have
				$$\P\left(\|G_{H_N}(\beta)-\E G_{H_N}(\beta)\|>\delta\right)\leq 2\exp\left(\frac{-c N\eta^4\delta^2}{\|\gamma_2\|^2\|d_N\|^2}\right).$$
				Since there are $O\left(|I|\cdot \kappa\right)$ number of points on the net,
				\begin{align*}
				\P\left(\sup_{\Pi_{I,\kappa}\cdot e_{1,1}-\gamma_0+i\eta}\|G_{H_N}(\beta)-\E G_{H_N}(\beta)\|>\delta\right)\leq& \frac{C'|I|\cdot\kappa}{\eta^4\delta^2}\exp\left(\frac{-c N\eta^4\delta^2}{\|\gamma_2|\|^2\|d_N\|^2}\right)\\
				\leq& \exp\left(\frac{-c N\eta^4\delta^2}{\|\gamma_2|\|^2\|d_N\|^2}+\log \left(\frac{C'|I|\cdot\kappa}{\eta^4\delta^2}\right)\right)\\
				\leq& \exp\left(\frac{-c'' N\eta^4\delta^2}{\|\gamma_2|\|^2\|d_N\|^2}\right)
				\end{align*}
				if $N\geq \frac{-c_1\log(\eta^4\delta^2)}{\eta^4\delta^2}$ for some $c_1$ large enough.
				\end{proof}
			
				Let $\delta^2 = \frac{\log N}{N\eta^4}$. Then $N\geq \frac{-c_1\log(\eta^4\delta^2)}{\eta^4\delta^2}=\frac{-c_1\log((\log N)/N)}{(\log N)/N}$ when $N$ large. If $\eta = N^{-1/12}\sqrt{\log N}$, then Lemma \ref{supbound} shows that, as long as $N$ is large enough,
				$$\P\left(\sup_{\Pi_{I,\kappa}\cdot e_{1,1}-\gamma_0+i\eta}\|G_{H_N}(\beta)-\E G_{H_N}(\beta)\|>\frac{1}{N^{\frac{1}{3}}\sqrt{\log N}}\right)\leq \exp\left(-c \log N\right)$$
				for some constant $c>0$.
				
				Given any $\varepsilon>0$, by \eqref{Eerror} and Proposition \ref{Impart}, $\varepsilon^{(N)}(\beta)<\varepsilon$ when $N$ large. Lemma \ref{ExpEst} shows that
				$$\|\E G_{H_N}(\beta)-G_{H}(\beta)\|\leq \varepsilon + Cl.$$
				We have proved the following.
				\begin{proposition}
					\label{Conc}
					Assume that the triple $(\mu_c, \mu_d, H)$ is regular in $(\Pi_{I, \kappa}\cdot e_{1,1}-\gamma_0, \R\cdot I_n)$. Assume also that $l:=LD(c_N, d_N)\leq \bar{l}$ where $\bar{l}$ is chosen as in Proposition \ref{Newton}. Let $\eta = N^{-1/12}\sqrt{\log N}$. Then for some positive constants $C$ and $c$, and for every $\varepsilon>0$,
					$$\P\left(\sup_{\Pi_{I,\kappa}\cdot e_{1,1}-\gamma_0+i\eta}\|G_{H_N}(\beta)- G_{H}(\beta)\|>\varepsilon+Cl\right)\leq \exp\left(-c\log N\right)$$
					for all sufficiently large $N$.
					\end{proposition}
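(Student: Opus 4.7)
The plan is to split the deviation using the triangle inequality
\[
\|G_{H_N}(\beta)-G_H(\beta)\|\;\le\;\|G_{H_N}(\beta)-\E G_{H_N}(\beta)\|\;+\;\|\E G_{H_N}(\beta)-G_H(\beta)\|,
\]
and control the two summands separately: the first (stochastic fluctuation) by the uniform concentration bound of Lemma~\ref{supbound}, and the second (deterministic bias) by Lemma~\ref{ExpEst}, which in turn rests on Newton's method estimate in Proposition~\ref{Newton} together with the error bound \eqref{Eerror}.

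For the fluctuation term, I would choose the threshold $\delta$ so that the concentration exponent in Lemma~\ref{supbound} becomes of order $\log N$. Setting $\delta^2 = (\log N)/(N\eta^4)$ yields $\eta^4\delta^2 = (\log N)/N$, so the admissibility hypothesis $N\geq -c_1\log(\eta^4\delta^2)/(\eta^4\delta^2)$ reduces to the self-consistent inequality $\log N\gtrsim c_1\log\log N$, which is satisfied for $N$ large. Plugging in $\eta=N^{-1/12}\sqrt{\log N}$ gives $\delta=1/(N^{1/3}\sqrt{\log N})\to 0$, and Lemma~\ref{supbound} produces
\[
\P\!\left(\sup_{\beta\in\Pi_{I,\kappa}\cdot e_{1,1}-\gamma_0+i\eta}\|G_{H_N}(\beta)-\E G_{H_N}(\beta)\|>\delta\right)\le \exp(-c\log N).
\]
In particular, for any fixed $\varepsilon>0$, once $N$ is large enough one has $\delta<\varepsilon$, so the same exponential tail bound holds with $\delta$ replaced by $\varepsilon$.

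For the bias term, I would verify the hypotheses of Lemma~\ref{ExpEst} uniformly over $\beta\in\Pi_{I,\kappa}\cdot e_{1,1}-\gamma_0+i\eta$. With $\eta=N^{-1/12}\sqrt{\log N}$, the estimate \eqref{Eerror} combined with Proposition~\ref{Impart} gives
\[
\varepsilon^{(N)}(\beta)=O\!\left(\tfrac{1}{N^{1/2}\eta^{4}}\right)=O\!\left(\tfrac{1}{N^{1/6}(\log N)^{2}}\right)
\]
uniformly in $\beta$, which is eventually less than any prescribed $\varepsilon$; by assumption $l\le\bar l$. Therefore Lemma~\ref{ExpEst} applies and yields the deterministic pointwise bound $\|\E G_{H_N}(\beta)-G_H(\beta)\|\le \varepsilon+Cl$, valid uniformly on the rectangle.

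Combining the two estimates via triangle inequality gives the desired conclusion. The main subtlety---and the source of the exponent $-1/12$---is balancing the scale $\eta$ between the two steps: one needs $N\eta^4\delta^2\gtrsim \log N$ for the concentration to beat the union bound over the $O(1/(\eta^4\delta^2))$-point net in Lemma~\ref{supbound}, while simultaneously needing $N^{1/2}\eta^4$ to be large so that $\varepsilon^{(N)}(\beta)$ is small in \eqref{Eerror}. The choice $\eta=N^{-1/12}\sqrt{\log N}$ is precisely the one that makes both quantities tend to $0$ while allowing the concentration exponent to remain $\Omega(\log N)$; I expect checking this balance, rather than any technical estimate, to be the key point of the argument.
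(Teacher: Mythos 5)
Your proposal is correct and follows essentially the same route as the paper: the same triangle-inequality split into fluctuation and bias, the fluctuation controlled by Lemma~\ref{supbound} with the choice $\delta^2=(\log N)/(N\eta^4)$ giving $\delta=N^{-1/3}(\log N)^{-1/2}$, and the bias controlled by Lemma~\ref{ExpEst} via \eqref{Eerror} and Proposition~\ref{Impart}. Your closing remark on why $\eta=N^{-1/12}\sqrt{\log N}$ balances the net/union bound against the smallness of $\varepsilon^{(N)}$ accurately identifies the source of the exponent.
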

				We denote the Hilbert-Schmidt norm by $\|\cdot\|_{HS}$. The function $X\mapsto \|X\|_{HS}$  is clearly Lipschitz, with Lipschitz constant $1$, in the Hilbert-Schmidt norm. By \cite[Corollary 4.4.28]{Book2010} and \cite[Lemma 8.3]{BBC2018}, for each $\beta$ with $\|\Im\beta\|=\eta$, 
				\begin{equation}
				\label{ConcIneq}
				\E\left(\|R_N(\beta)\|_{HS}-\E[\|R_N(\beta)\|_{HS}]\geq \varepsilon \right)\leq 2\exp\left(\frac{-\varepsilon^2 N\eta^4}{16\|\gamma_2\|^2\|D_N\|^2}\right).
				\end{equation}
			\begin{lemma}
				\label{Bound}
				Let $\eta = N^{-\frac{1}{12}}(\log N)^\delta$, $\delta>0$ and $\beta$ is of the form $z e_{1,1}-\gamma_0+i\eta$, where $z\in \Pi_{I, \kappa}$. If $(\mu_c, \mu_d, H)$ is regular in $(\Pi_{I, \kappa}\cdot e_{1,1}-\gamma_0, \R\cdot I_n)$, then the (random) matrix norm $\|R_N(\beta)\|$ of $R_N(\beta)$ is bounded uniformly with large probability. That is, there exists a $C>0$ such that
				$$\P\left(\sup_{\Pi_{I,\kappa}\cdot e_{1,1}-\gamma_0+i\eta}\|R_N(\beta)\|_{HS}>C\right)\leq \exp\left(\frac{-c'' N\eta^4\delta^2}{\|\gamma_2|\|^2\|d_N\|^2}\right)$$
				for all large enough $N$.
				\end{lemma}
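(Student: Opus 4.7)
The strategy parallels Lemma \ref{supbound}: combine the concentration inequality \eqref{ConcIneq} with a net argument on $\Pi_{I,\kappa}$, the new ingredient being a deterministic upper bound on $\E\|R_N(\beta)\|_{HS}$ uniform in $\beta$.

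First, I would bound $\|\E R_N(\beta)\|_{HS}$ via Theorem \ref{FinEst}, which gives
\[\|\E R_N(\beta) - (\omega_1^{(N)}(\beta)-\gamma_1\tensor c_N)^{-1}\|= O(N^{-1/2}\eta^{-5}),\]
and this is $o(1)$ under $\eta = N^{-1/12}(\log N)^\delta$. Because $c_N=C_N$ is diagonal, the matrix $\omega_1^{(N)}(\beta)-\gamma_1\tensor c_N$ is block diagonal in the eigenbasis of $c_N$, with $n\times n$ blocks $\omega_1^{(N)}(\beta)-\lambda_j\gamma_1$. Proposition \ref{Newton} gives $\omega_1^{(N)}\to\omega_1$ uniformly on $\overline{\mathscr R}_{\bar\eta}$; combined with regularity condition A, the compactness of $\bar S$ and $\textrm{supp}\,\mu_c$, and the Hausdorff convergence $\textrm{supp}\,\mu_{c_N}\to\textrm{supp}\,\mu_c$, this yields $\|(\omega_1^{(N)}(\beta)-\lambda_j\gamma_1)^{-1}\|\leq C$ uniformly for $\beta\in\overline{\mathscr R}_{\bar\eta}$ and all eigenvalues $\lambda_j$ of $c_N$. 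The block-diagonal inverse therefore has uniformly bounded Hilbert-Schmidt norm, giving $\|\E R_N(\beta)\|_{HS}\leq C_0$ for some $C_0$ independent of $N$ and $\beta$ in the rectangle.

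Next, I would upgrade $\|\E R_N\|_{HS}$ to $\E\|R_N\|_{HS}$. The triangle inequality gives $\E\|R_N\|_{HS}\leq\|\E R_N\|_{HS}+\E\|R_N-\E R_N\|_{HS}$, and by Jensen the last term is at most $\sqrt{\E\|R_N-\E R_N\|_{HS}^2}$. This matrix variance is estimated via a Poincar\'e inequality for Haar measure on $U(N)$ applied to the explicit gradient $\partial_X R_N = R_N(\gamma_2\tensor U[X,D_N]U^*) R_N$: a direct calculation produces $\E\|R_N-\E R_N\|_{HS}^2 = O(1/(N\eta^4))$, which is $o(1)$ under our scaling. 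Hence $\E\|R_N(\beta)\|_{HS}\leq C_0 + o(1)$ uniformly.

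For the net argument I would mirror Lemma \ref{supbound}: the resolvent derivative satisfies $\|R_N'(\beta)\|\leq\|(\Im\beta)^{-1}\|^2\leq\eta^{-2}$, motivating an $\eta^2\delta$-net on $\Pi_{I,\kappa}$ of cardinality $O(|I|\kappa/(\eta^4\delta^2))$. Applying \eqref{ConcIneq} at each net point with $\varepsilon=\delta/2$ and taking a union bound yields
\[\P\Bigl(\sup_\beta\|R_N(\beta)\|_{HS}>C_0+\delta\Bigr)\leq\frac{C'|I|\kappa}{\eta^4\delta^2}\cdot 2\exp\left(\frac{-cN\eta^4\delta^2}{\|\gamma_2\|^2\|d_N\|^2}\right),\]
and absorbing the polynomial prefactor into the exponent as in the final step of Lemma \ref{supbound} yields the stated bound with $C=C_0+\delta$.

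The main technical obstacle lies in the matrix-variance estimate of the second step: the concentration inequality \eqref{ConcIneq} directly controls only the scalar deviation $\|R_N\|_{HS}-\E\|R_N\|_{HS}$, and bridging this to the stronger matrix-level quantity $\E\|R_N-\E R_N\|_{HS}^2$ requires either the Poincar\'e approach on $U(N)$ described above or an entrywise Haar-concentration analysis, each of which depends on a careful normalization check consistent with the HS convention used in \eqref{ConcIneq}.
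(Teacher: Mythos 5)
Your first and third steps reproduce the paper's proof almost verbatim: the uniform bound on $\|\E R_N(\beta)\|$ is obtained there from Theorem \ref{FinEst} together with the representation $(\omega_{1}^{(N)}(\beta)-\gamma_1\tensor c_N)^{-1}=\int_\R(\omega_{1}^{(N)}(\beta)-t\gamma_1)^{-1}E_N(dt)$ (your block-diagonal description in the eigenbasis of $c_N$ is the same observation), combined with regularity, Proposition \ref{Newton}, and the Hausdorff convergence of $\textrm{supp}\,\mu_{c_N}$; the $\frac{\eta^2\varepsilon}{4}$-net, the Lipschitz estimate $\|R_N(\beta)-R_N(\beta_0)\|_{HS}\leq\|\beta-\beta_0\|\,\|(\Im\beta)^{-1}\|\,\|(\Im\beta_0)^{-1}\|$, and the union bound are also identical. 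You have moreover put your finger on the genuinely delicate point: \eqref{ConcIneq} centers $\|R_N(\beta)\|_{HS}$ at $\E\|R_N(\beta)\|_{HS}$, not at $\|\E R_N(\beta)\|_{HS}$, and the paper's own proof passes from one to the other without comment (``Since $\|\E R_N(\beta)\|<C$, we conclude the proof''). So your instinct to insert a bridging estimate addresses a real soft spot rather than inventing one.

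The bridge you propose, however, does not hold as stated. The order $O(1/(N\eta^4))$ is the variance of a \emph{single entry} of $R_N$ (this is exactly the scale encoded by the Gaussian tail $\exp(-cN\varepsilon^2\eta^4)$), whereas $\|R_N-\E R_N\|_{HS}^2$ is a sum of $(nN)^2$ entrywise variances with no cancellation available. The Poincar\'e computation you describe therefore yields $\E\|R_N-\E R_N\|_{HS}^2=O(N/\eta^4)$ in the unnormalized convention, or $O(1/\eta^4)$ under the paper's convention $\|A\|_{HS}^2=n\,\tr(A^*A)$ with normalized trace --- in either case nowhere near $o(1)$ when $\eta\sim N^{-1/12}(\log N)^\delta$. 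Worse, a Ward-type identity shows that $R_N(\Im\beta)R_N^*=-\Im R_N$, so $\tr(R_N^*R_N)$ is generically of order $1/\eta$ times the imaginary part of the trace of the resolvent; any argument bounding $\E\|R_N\|_{HS}$ by a constant must exploit the special structure $\Im\beta=\Im z\cdot e_{1,1}+\eta I_n$, in which the only direction where $\Im\beta$ is large (of size $\Im z\gg\eta$) is the $(1,1)$ corner carrying the spectral mass of $P(c_N,d_N)$. As written, your second step is the one piece of the argument that would fail, and it needs to be replaced by an estimate of this structural kind (or the gap acknowledged, as the paper implicitly does).
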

			\begin{proof}
				
				Recall that, by Proposition \ref{Newton}, and Theorem \ref{FinEst} (2) that $\varepsilon^{(N)}(\beta) = O\left(\frac{1}{N^{\frac{1}{2}}\eta^5}\right)$,
				\begin{equation}\label{wdist}\|\omega_d(\beta)-\omega_{d_N}(\beta)\| = O\left(\frac{1}{N^{\frac{1}{2}}\eta^5}+\textrm{LD}(c_N, d_N)\right)
				\end{equation}
				when $N$ is large enough, so that $\textrm{LD}(c_N, d_N)$ is small enough. We also recall that \[\textrm{LD}(c_N, d_N) = \max\{d_L(\mu_{c_N},\mu_c),d_L(\mu_{d_N},\mu_d)\}.\]
				
				If $N^{\frac{1}{2}}\eta^4$ is large enough, by Theorem \ref{FinEst}, $\|\E R_N(\beta)-(\omega_{1}^{(N)}(\beta)-\gamma_1\tensor c_N)^{-1}\|=O\left(\frac{1}{N^{\frac{1}{2}}\eta^5}\right)$.
				We notice that $$(\omega_{1}^{(N)}(\beta)-\gamma_1\tensor c_N)^{-1} = \int_\R (\omega_{1}^{(N)}(\beta)-t\gamma_1)^{-1}\;E_N(dt)$$
				where $E_N$ is the (discrete) spectral measure for $c_N$. The support of $E_N$ converges to $\textrm{supp}\:\mu_c$ in Hausdorff distance by strong convergence in distribution. Using equation \eqref{wdist} and Proposition \ref{Newton}, when $N$ is large enough, regularity on $(\Pi_{I,\kappa}\cdot e_{1,1}-\gamma_0, \R\cdot I_n)$ implies that $(\omega_{1}^{(N)}(\beta)-\gamma_1\tensor c_N)^{-1}$ is bounded uniformly. We conclude that $\|\E R_N(\beta)\|<C$ for some $C>0$, for all $\beta = ze_{1,1}-\gamma_0+i\eta$ where $z\in \Pi_{I,\kappa}$.

				By \eqref{ConcIneq}, for a particular $\beta_0$, with probability at least $1-2\exp\left(\frac{-N\varepsilon^2\eta^4}{16 \|\gamma_2\|^2\|d_N\|^2}\right)$, we have
				$$\|R_N(\beta_0)\|_{HS}\leq \varepsilon +\E\|R_N(\beta_0)\|_{HS}.$$
				
				We create an $\frac{\eta^2\varepsilon}{4}$-net on $\Pi_{I, \kappa}$, which requires $O(|I|\cdot\kappa)$ number of points. Given any $\beta$, there exists $\beta_0$ such that $\|\beta-\beta_0\|\leq \frac{\eta^2\varepsilon}{4}$ and $\|R_N(\beta_0)\|\leq \frac{\varepsilon}{2} + \E\|R_N(\beta_0)\|$. Then using the inequality $\|AB\|_{HS} \leq \|A\|\|B\|_{HS}$ we have the estimates
				$$\big|\|R_N(\beta)\|_{HS}-\|R_N(\beta_0)\|_{HS}\big|\leq\|R_N(\beta)-R_N(\beta_0)\|_{HS}\leq \|\beta-\beta_0\|_{HS}\|(\Im\beta)^{-1}\|\|(\Im\beta_0)^{-1}\|\leq \frac{\varepsilon}{4}.$$
				Similarly,
				$$\big|\|\E R_N(\beta)\|_{HS}-\|\E R_N(\beta_0)\|_{HS}\big|\leq\|\E R_N(\beta)-\E R_N(\beta_0)\|_{HS}\leq  \|\beta-\beta_0\|_{HS}\|(\Im\beta)^{-1}\|\|(\Im\beta_0)^{-1}\|\leq \frac{\varepsilon}{4}.$$
				It follows that, when $N$ is sufficiently large,
				$$\P\left(\sup_{\Pi_{I,\kappa}\cdot e_{1,1}-\gamma_0+i\eta}\|R_N(\beta)\|_{HS}>\varepsilon+\E\|R_N(\beta\|\right)\leq \frac{C'|I|\cdot\kappa}{\eta^4\delta^2}\exp\left(\frac{-c N\eta^4\delta^2}{\|\gamma_2|\|^2\|d_N\|^2}\right)\leq \exp\left(\frac{-c'' N\eta^4\delta^2}{\|\gamma_2|\|^2\|d_N\|^2}\right)$$
				for some constants $C', c, c''>0$. Since $\|\E R_N(\beta)\|<C$, we conclude the proof.
				\end{proof}

				We use the regularity at the boundary of $M_n(\tr)[(ze_{1,1}-L_N+i\eta)^{-1}]$ to get estimates of $M_n(\tr)[(ze_{1,1}-L_N)^{-1}]$; the latter is what we actually want to estimate because the $(1,1)$-entry of $M_n(\tr)[(ze_{1,1}-L_N)^{-1}]$ will give us the Cauchy transform of $P(c_N, d_N)$.
				
				We have 
				\begin{align*}
					&\;\|M_n(\tr)[(ze_{1,1}-L_N)^{-1}] - M_n(\tau)[(ze_{1,1}-L)^{-1}]\|_{HS}\\
					\leq &\;\|M_n(\tr)[(ze_{1,1}-L_N)^{-1}]-M_n(\tr)[(ze_{1,1}-L_N+i\eta)^{-1}]\|_{HS}\\
					&\;+\|M_n(\tau)[(ze_{1,1}-L)^{-1}]-M_n(\tau)[(ze_{1,1}-L+i\eta)^{-1}]\|_{HS}\\
					&\;+\|M_n(\tr)[(z e_{1,1}-L_N+i\eta)^{-1}] -M_n(\tau)[(z e_{1,1}-L+i\eta)^{-1}]\|_{HS}\\
					\leq &\;\eta\|(ze_{1,1}-L_N)^{-1}\|\|(ze_{1,1}-L_N+i\eta)^{-1}\|_{HS}+\eta\|(ze_{1,1}-L)^{-1}\|\|(ze_{1,1}-L+i\eta)^{-1}\|_{HS}\\
					&+\|M_n(\tr)[(z e_{1,1}-L_N+i\eta)^{-1}] -M_n(\tau)[(z e_{1,1}-L+i\eta)^{-1}]\|_{HS}.
				\end{align*}
				We take $z = x+i\eta^\alpha$, where $x\in I$ and $0<\alpha<1$. The quantities $\|(ze_{1,1}-L_N+i\eta)^{-1}\|$, $\|(ze_{1,1}-L+i\eta)^{-1}\|$ are bounded with large probability, by Lemma \ref{Bound}. Since $(\mu_c, \mu_d, H)$ is regular on $\Pi_{I, \varepsilon}-\gamma_0$, and by Lemma \ref{Linv}, 
				\begin{align}
				\label{Boundary}
				&\|M_n(\tr)[(ze_{1,1}-L_N)^{-1}] - M_n(\tau)[(ze_{1,1}-L)^{-1}]\|_{HS}\\
				\leq &M\eta^{1-\alpha}+\|M_n(\tr)[(z e_{1,1}-L_N+i\eta)^{-1}] -M_n(\tau)[(z e_{1,1}-L+i\eta)^{-1}]\|_{HS}\\
				\leq &M\eta^{1-\alpha}+\sqrt{n}\|M_n(\tr)[(z e_{1,1}-L_N+i\eta)^{-1}] -M_n(\tau)[(z e_{1,1}-L+i\eta)^{-1}]\|
				\end{align}
				for some constant $M>0$. We note that we used the fact that $\|A\|_{HS}^2 = n\cdot\tr(A^*A)\leq n\|A^*A\| \leq n\|A\|^2$ for any matrix $A$. The preceding observation leads to the following result, by Proposition \ref{Conc}.
		\begin{proposition}
			\label{Cauchy}
		Assume that the triple $(\mu_c, \mu_d, H)$ is regular on $(\Pi_{I, \kappa}\cdot e_{1,1}-\gamma_0, \R\cdot I_n)$ for some $\kappa>0$. Assume also that $l:=LD(c_N, d_N)\leq \bar{l}$ where $\bar{l}$ is chosen such that the conclusion of Proposition \ref{Newton} holds. Let $\eta = N^{-1/12}\sqrt{\log N}$. Then for some positive constants $M$, $C$ and $c$, and for every $0<\alpha<1$, $\varepsilon>0$,
		$$\P\left(\sup_{I+i\sqrt{\eta}}\|M_n(\tr)[(ze_{1,1}-L_N)^{-1}] - M_n(\tau)[(ze_{1,1}-L)^{-1}]\|_{HS}>M\eta^{1-\alpha}+\varepsilon+Cl\right)\leq \exp\left(-c\log N\right)$$
		for all sufficiently large $N$.
		\end{proposition}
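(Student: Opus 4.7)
The strategy is to execute the triangle-inequality decomposition displayed immediately before the statement. Write $\beta = ze_{1,1}-\gamma_0+i\eta$, where $z = x+i\eta^\alpha$ with $x\in I$. The three pieces to control are: (i) $\eta\,\|(ze_{1,1}-L_N)^{-1}\|\,\|(ze_{1,1}-L_N+i\eta)^{-1}\|_{HS}$, (ii) its deterministic analog for $L$, and (iii) $\sqrt{n}\,\|M_n(\tr)[(ze_{1,1}-L_N+i\eta)^{-1}]-M_n(\tau)[(ze_{1,1}-L+i\eta)^{-1}]\|$, which is nothing but $\sqrt{n}\,\|G_{H_N}(\beta)-G_H(\beta)\|$.

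For (i), observe that $\Im z = \eta^\alpha>0$ forces $\|(z-P(c_N,d_N))^{-1}\|\leq \eta^{-\alpha}$ deterministically, so Lemma \ref{Linv} yields
\[
\|(ze_{1,1}-L_N)^{-1}\|\leq T_1(\|c_N\|,\|d_N\|)\,\eta^{-\alpha}+T_2(\|c_N\|,\|d_N\|)=O(\eta^{-\alpha})
\]
uniformly, since $\|c_N\|$ and $\|d_N\|$ are uniformly bounded by strong convergence. Lemma \ref{Bound}, together with the regularity assumption, controls $\|(ze_{1,1}-L_N+i\eta)^{-1}\|_{HS}=\|R_N(\beta)\|_{HS}$ by a constant uniformly over $\Pi_{I,\kappa}\cdot e_{1,1}-\gamma_0+i\eta$, outside an event of probability at most $\exp(-c\log N)$. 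Multiplying these two bounds by $\eta$ produces (i) $=O(\eta^{1-\alpha})$ with overwhelming probability, which is the source of the $M\eta^{1-\alpha}$ term. The bound (ii) for the deterministic limit is obtained by the identical argument applied to $L$: Lemma \ref{Linv} again handles $\|(ze_{1,1}-L)^{-1}\|$, while regularity of $(\mu_c,\mu_d,H)$ and the subordination representation $(\omega_1+\omega_2-\beta)^{-1}=G_H(\beta)$ guarantee a uniform bound on $\|(ze_{1,1}-L+i\eta)^{-1}\|_{HS}$ throughout the strip.

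For (iii), Proposition \ref{Conc} applies directly: outside an event of probability at most $\exp(-c\log N)$, the operator-norm distance $\|G_{H_N}(\beta)-G_H(\beta)\|$ is at most $\varepsilon+Cl$ uniformly over the strip, and the conversion $\|A\|_{HS}\leq\sqrt{n}\|A\|$ is harmless. A union bound over the two exceptional events (from Lemma \ref{Bound} and Proposition \ref{Conc}) keeps the total exceptional probability at $\exp(-c\log N)$ after absorbing constants into $c$, and assembling the three contributions delivers the claimed bound $M\eta^{1-\alpha}+\varepsilon+Cl$.

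The argument is essentially bookkeeping; the only point requiring care is the deterministic estimate $\|(ze_{1,1}-L_N)^{-1}\|=O(\eta^{-\alpha})$, which depends on having $\|c_N\|$ and $\|d_N\|$ bounded uniformly in $N$ so that the polynomials $T_1,T_2$ from Lemma \ref{Linv} are controlled — this is ensured by strong convergence $(c_N,d_N)\to(c,d)$. All other steps are direct invocations of estimates already established in Lemma \ref{Linv}, Lemma \ref{Bound}, and Proposition \ref{Conc}.
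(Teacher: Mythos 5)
Your proposal is correct and follows essentially the same route as the paper: the identical triangle-inequality decomposition, the resolvent identity for the two $i\eta$-shift terms, Lemma \ref{Linv} combined with $\|(z-P(c_N,d_N))^{-1}\|\le\eta^{-\alpha}$ to get the $M\eta^{1-\alpha}$ contribution, Lemma \ref{Bound} for the high-probability bound on $\|R_N(\beta)\|_{HS}$, and Proposition \ref{Conc} for the remaining term. Your explicit remark that the uniform boundedness of $\|c_N\|,\|d_N\|$ (from strong convergence) is what controls the polynomials $T_1,T_2$ in Lemma \ref{Linv} is a detail the paper leaves implicit, but the argument is the same.
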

				
		Being regular on $(\Pi_{I, \kappa}\cdot e_{1,1}-\gamma_0, \R\cdot I_n)$ ensures that $\mu_{P(c,d)}$ is absolutely continuous on the interval $I$; this follows from two results. In \cite[Lemma 1.10]{Belinschi2006} Belinschi showed that if we denote $\mu^{\textrm{sc}}$ as the singular continuous part of $\mu$, then for $\mu^{\textrm{sc}}$-almost all $x\in\R$, the nontangential limit of the imaginary part of the Cauchy transform is infinite. Bercovici and Voiculescu \cite{BD1998} proved that $(z-x)G_\mu(z)\to\mu(\{x\})$ as $z\to x$ nontangentially. 
				
		The main theorem of this section is a local limit theorem. The proof is almost identical to the ones given by \cite[Corollary 4.2]{ESY2009} and \cite[Theorem 1.4]{Kargin2015}.
		\begin{theorem}
			\label{LLT}
			Suppose that $\mu_{c_N}\to \mu_c$ and $\mu_{d_N}\to\mu_d$ weakly. Using the same notation as above: $H$ is the nonconstant part of the linearization $L$ of $P$. Suppose $(\mu_c, \mu_d, H)$ is regular on $(\Pi_{I, \kappa}\cdot e_{1,1}-\gamma_0, \R\cdot I_n)$. Let $\rho$ be the density of $P(c, d)$, that is the absolutely continuous part of $\mu_{P(c,d)}$. Assume that $\eta_N^* = N^{-\frac{1}{12}}\log N$ and fix any $\alpha\in(0,1)$. Then for every $x\in I$,
			$$\frac{M_{(\eta_N^*)^\alpha}(x)}{2(\eta_N^*)^\alpha N}\to \rho(x)$$
			in probability, where $M_{(\eta_N^*)^\alpha}(x)$ is the number of eigenvalues in the interval centered at $x$ of length $2(\eta_N^*)^\alpha$.
			\end{theorem}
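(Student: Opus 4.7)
The proof follows the Stieltjes transform strategy used in \cite{ESY2009, Kargin2015}: express the empirical eigenvalue count via an integral of the imaginary part of the Cauchy transform, then transfer the matrix-valued closeness from Proposition \ref{Cauchy} to a scalar statement about $G_{X_N}(z)$. The first step is to relate $G_{X_N}$ to the linearization. Using the Schur complement formula on $L_N=\begin{pmatrix}0 & u\\ u^*& Q\end{pmatrix}$ with $uQ^{-1}u^*=-P$, the $(1,1)$-block of $(ze_{1,1}-L_N)^{-1}$ is $(z-P(c_N,d_N))^{-1}$, so
$$G_{X_N}(z)=[M_n(\tr)((ze_{1,1}-L_N)^{-1})]_{1,1}, \qquad G_X(z)=[M_n(\tau)((ze_{1,1}-L)^{-1})]_{1,1}.$$
Proposition \ref{Cauchy} then gives $\sup_{y\in I}|G_{X_N}(y+i\eta)-G_X(y+i\eta)|\leq M\eta^{1-\alpha'}+\varepsilon+Cl$ with probability at least $1-\exp(-c\log N)$ for $\eta=(\eta_N^*)^\alpha$ and $\alpha'>0$ small.

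Next, translate this scalar Cauchy transform bound into an eigenvalue count. The density
$$\rho(x)=\frac{1}{\pi}\lim_{\eta\downarrow 0}\Im\, G_X(x+i\eta)$$
is well-defined and continuous on $I$: the regularity hypothesis guarantees that the boundary value of $G_X$ is bounded on $I$, which by the Bercovici--Voiculescu criterion and \cite[Lemma 1.10]{Belinschi2006} rules out atomic and singular continuous parts. Hence $|\frac{1}{\pi}\Im G_X(x+i\eta)-\rho(x)|\to 0$ uniformly for $x\in I$ as $\eta\downarrow 0$. The Poisson-kernel identity
$$\frac{1}{\pi}\Im\, G_{X_N}(x+i\eta)=\frac{1}{N}\sum_{i}\frac{\eta/\pi}{(x-\lambda_i^{(N)})^2+\eta^2}$$
immediately gives the one-sided bound $\frac{M_{\eta}(x)}{2N\eta}\leq \Im\, G_{X_N}(x+i\eta)$, because each eigenvalue in $[x-\eta,x+\eta]$ contributes at least $1/(2\pi\eta)$ to the sum.

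For the matching lower bound on $M_\eta(x)/(2N\eta)$, apply the ESY smoothing argument at scale $\tilde{\eta}=(\eta_N^*)^\alpha$: write the indicator of $[x-\tilde{\eta},x+\tilde{\eta}]$ as a Poisson integral, leading to
$$\frac{M_{\tilde{\eta}}(x)}{2N\tilde{\eta}}\geq \frac{1}{2\tilde{\eta}}\int_{x-\tilde{\eta}}^{x+\tilde{\eta}}\frac{1}{\pi}\Im\, G_{X_N}(y+i\eta')\,dy-E(\eta',\tilde{\eta}),$$
where $\eta'\ll\tilde{\eta}$ and the smoothing error $E(\eta',\tilde{\eta})$ is negligible when $\eta'/\tilde\eta\to 0$ and the tails of $\mu_{X_N}$ are well-controlled (the latter follows from strong convergence of $(c_N,d_N)$). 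Combining the two-sided bounds with Proposition \ref{Cauchy} and the continuity of $\rho$ on $I$ yields, with probability at least $1-\exp(-c\log N)$,
$$\left|\frac{M_{\tilde{\eta}}(x)}{2N\tilde{\eta}}-\rho(x)\right|\leq o(1),$$
uniformly for $x\in I$, as required.

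The main technical obstacle is controlling the smoothing error $E(\eta',\tilde{\eta})$: one must choose an intermediate scale $\eta'$ where Proposition \ref{Cauchy} still applies (so $\eta'$ cannot be too small, being bounded below by something like $N^{-1/24}$ up to logs) while keeping $\eta'/\tilde{\eta}\to 0$ so the smoothing is effective. The interplay of these two constraints, combined with the $\eta^{1-\alpha'}$ boundary error in Proposition \ref{Cauchy}, is what forces the restriction $\alpha<1$ and the particular form of $\eta_N^*=N^{-1/12}\log N$. The rest is bookkeeping: propagating the high-probability event $\{\sup_{y\in I}|G_{X_N}-G_X|\leq \varepsilon+Cl\}$ through the double integral and taking $N\to\infty$.
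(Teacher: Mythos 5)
Your overall strategy is the one the paper follows (the Erd\H{o}s--Schlein--Yau Poisson-kernel smoothing of the indicator, combined with Proposition \ref{Cauchy} and the regularity hypothesis to pass to $\rho$), but there is a concrete flaw in how you close the upper bound. The inequality you state, $\frac{M_{\eta}(x)}{2N\eta}\leq -\Im G_{X_N}(x+i\eta)$, follows from the pointwise bound $\frac{\eta}{(x-\lambda)^2+\eta^2}\geq\frac{1}{2\eta}$ on $[x-\eta,x+\eta]$, but its right-hand side converges to $\pi\rho(x)$, not to $\rho(x)$; so your ``two-sided bounds'' sandwich $M_{\tilde\eta}(x)/(2N\tilde\eta)$ between $\rho(x)-o(1)$ and $\pi\rho(x)+o(1)$, which does not prove convergence. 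The correct argument — and what the paper does — is to make the smoothing decomposition itself two-sided: write $\mathbbm{1}_{I^*}=R-T_1-T_2-T_3$ where $R$ is the Poisson average of $\mathbbm{1}_{I^*}$ at the smaller scale, and bound $\bigl|\int(T_1+T_2+T_3)\,d\mu_{P_N}\bigr|$ in absolute value by $O(B^{-1/2})$ using the boundedness of the smoothed empirical density (which comes from regularity via the linearization, not from tail control of $\mu_{X_N}$ as you suggest). Then the single main term $\frac{1}{2\tilde\eta}\int_{I^*}\frac{1}{\pi}\Im G_{P_N}$ gives both directions at once.

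A second, smaller point: the ``main technical obstacle'' you identify (finding $\eta'$ with $\eta'/\tilde\eta\to 0$ while Proposition \ref{Cauchy} still applies at scale $\eta'$) is not how the paper resolves the tension. The paper takes the smoothing scale and the window scale to differ only by a large \emph{constant} factor $B$ (window $(\eta^*)^\alpha=B\eta^\alpha$), so Proposition \ref{Cauchy} is applied at essentially the same scale as the window, the smoothing error is $O(B^{-1/2})$ uniformly in $N$, and one sends $N\to\infty$ first and then $B\to\infty$. This removes the need for an intermediate scale altogether and is why no lower bound of the form $N^{-1/24}$ on $\eta'$ ever enters. With these two corrections your argument matches the paper's proof.
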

		\begin{proof}
			In this proof, we will drop the subscript of $\eta_N^*$, and write $\eta^*$ instead.
			Fix any $\alpha\in(0,1)$. Let $\eta = c N^{-1/12}$ and $c$ is sufficiently large, and let $\eta^* = B^\frac{1}{\alpha}\eta$. We write $I^* = [x-(\eta^*)^\alpha, x+(\eta^*)^\alpha]$. Let
			\begin{align*}
			R(\lambda) := &\frac{1}{\pi}\int_{x-(\eta^*)^\alpha}^{x+(\eta^*)^\alpha}\frac{\eta^\alpha}{(t-\lambda)^2+(\eta^\alpha)^2}\;dt\\
			=&\frac{1}{\pi}\left(\arctan\left(\frac{x-\lambda}{\eta^\alpha}+B\right)-\arctan\left(\frac{x-\lambda}{\eta^\alpha}-B\right)\right).
			\end{align*}
			Then $R = \1_{I^*}+T_1+T_2+T_3$ where $\1_{I^*}$ is the indicator function of $I^*$ and functions $T_1$, $T_2$ and $T_3$ satisfy the following properties:
			\begin{equation*}
			\begin{array}{ll}
			|T_1|\leq \frac{c}{\sqrt{B}}\qquad\qquad\qquad&\textrm{supp}(T_1)\subseteq I_1= [x-2(\eta^*)^\alpha, x+2(\eta^*)^\alpha];\\
			|T_2|\leq 1,&\textrm{supp}(T_2)\subseteq J_1\cup J_2;\\
			|T_3|\leq \frac{C\eta^\alpha(\eta^*)^\alpha}{(\lambda-x)^2+(\eta^*)^{2\alpha}}&\textrm{supp}(T_3)\subseteq I_1^c.
			\end{array}
			\end{equation*}
			The $J_1$ and $J_2$ above are the intervals of length $\sqrt{B}\eta^\alpha$ with midpoints $x-(\eta^*)^\alpha$ and $x+(\eta^*)^\alpha$ respectively. Note that
			\begin{align*}
				\frac{M_{(\eta^*)^\alpha}(x)}{2(\eta^*)^\alpha N} &= \frac{1}{2(\eta^*)^\alpha}\int_\R \1_{I^*}(\lambda)\mu_{P_N}(d\lambda)\\
				&= \frac{1}{2(\eta^*)^\alpha}\int_\R R(\lambda) \mu_{P_N}(d\lambda)-\frac{1}{2(\eta^*)^\alpha}\int_\R (T_1+T_2+T_3)\mu_{P_N}(d\lambda).
				\end{align*}
			The second integral can be estimated as
			$$\frac{1}{2(\eta^*)^\alpha}\int_\R(T_1+T_2+T_3)\mu_{P_N}(d\lambda)\leq \frac{c}{\sqrt{B}}\frac{M_{I_1}(x)}{2(\eta^*)^\alpha N}+\frac{M_{J_1}(x)+M_{J_2}(x)}{2(\eta^*)^\alpha N} +\frac{C\eta^\alpha}{(\eta^*)^\alpha}\rho_{(\eta^*)^\alpha}^{(N)}(x)$$
			where $M_I$ denotes the number of eigenvalues of $H_N$ in interval $I$, and
			$$\rho_{(\eta^*)^\alpha}^{(N)}(x):= \frac{1}{\pi}\Im G_{P_N}(x+i(\eta^*)^\alpha) =\frac{1}{\pi}\int_\R\frac{(\eta^*)^\alpha}{(x-\lambda)^2+(\eta^*)^{2\alpha}}\mu_{P_N}(d\lambda).$$
			Hence, by using the inequality 
			$$\frac{1}{\pi}\int_\R\frac{(\eta^*)^\alpha}{(x-\lambda)^2+(\eta^*)^{2\alpha}}\mu_{P_N}(d\lambda) \geq \frac{1}{\pi}\int_{x-(\eta^*)^\alpha}^{x+(\eta^*)^\alpha}\frac{(\eta^*)^\alpha}{(x-\lambda)^2+(\eta^*)^{2\alpha}}\mu_{P_N}(d\lambda)\geq \frac{1}{2\pi}\frac{1}{(\eta^*)^\alpha}\frac{M_{(\eta^*)^\alpha}(x)}{N},$$
			we have
			$$\frac{1}{2(\eta^*)^\alpha}\int_\R|T_1+T_2+T_3|\mu_{P_N}(d\lambda)\leq \frac{c}{\sqrt{B}}[\rho_{2(\eta^*)^\alpha}^{(N)}(x)+\rho_{\sqrt{B}\eta^\alpha}^{(N)}(x-(\eta^*)^\alpha)+\rho_{\sqrt{B}\eta^\alpha}^{(N)}(x+(\eta^*)^\alpha)+\rho_{(\eta^*)^\alpha}^{(N)}(x)].$$
			By the linearization property, $\E \,G_{P_N}(x+i(\eta^*)^\alpha) = M_n(\E\tr)[((x+i(\eta^*)^\alpha)e_{1,1}-L(c_N, d_N))^{-1}]_{1,1}$. So, putting $z=x+i(\eta^*)^\alpha$ we look at
			\begin{align*}
			&\big\{M_n(\E\tr)[(ze_{1,1}-L(c_N, d_N))^{-1}] - M_n(\E\tr)[(ze_{1,1}+i\eta-L(c_N, d_N))^{-1}]\big\}\\
			&\qquad\qquad+M_n(\E\tr)[(ze_{1,1}+i\eta-L(c_N, d_N))^{-1}]\\
			=&M_n(\E\tr)\left(i\eta(z e_{1,1}-L(c_N, d_N))^{-1}(ze_{1,1}-L(c_N, d_N)+i\eta)^{-1}\right)+M_n(\E\tr)[(ze_{1,1}+i\eta-L(c_N, d_N))^{-1}\\
			=&M_n(\E\tr)\left(i\eta(z e_{1,1}-L(c_N, d_N))^{-1}(ze_{1,1}-\gamma_0+i\eta-H_N)^{-1}\right)+G_{H_N}(ze_{1,1}-\gamma_0+i\eta)
			\end{align*}
			
			The first term is bounded by $\eta^{1-\alpha}\|(ze_{1,1}-\gamma_0+i\eta-H_N)^{-1}\|$, which is bounded (and indeed goes to $0$ as $\eta\to 0$) by the assumption that $(\mu_c, \mu_d, H)$ is regular and Proposition \ref{Newton}; the argument here is similar to the proof Lemma \ref{ExpEst}. The regularity assumption implies that
			 \begin{align*}
			&G_{H_N}(ze_{1,1}-\gamma_0+i\eta) \\
			=& [G_{H_N}(ze_{1,1}-\gamma_0+i\eta)-G_{c_N}(\omega_{1}^{(N)}(ze_{1,1}-\gamma_0+i\eta))]+G_{c_N}(\omega_{1}^{(N)}(ze_{1,1}-\gamma_0+i\eta))\\
			=&O(N^{-\frac{1}{2}}\eta^{-5})+G_{c_N}(\omega_{1}^{(N)}(ze_{1,1}-\gamma_0+i\eta))\\
			=&o(1)+G_{c_N}(\omega_{1}^{(N)}(ze_{1,1}-\gamma_0+i\eta))
			\end{align*}
			is bounded, so is $\E \,G_{P_N}(x+i(\eta^*)^\alpha)$. Whence, $\frac{1}{2(\eta^*)^\alpha}\int_\R(T_1+T_2+T_3)\mu_{P_N}(d\lambda) = O (B^{-\frac{1}{2}})$.
			
			Now the main term $\frac{1}{2(\eta^*)^{\alpha}}\int_\R R(\lambda) \mu_{P_N}(d\lambda)$ can be written as
			\begin{align*}
				\frac{1}{2(\eta^*)^{\alpha}}\int_\R R(\lambda) \mu_{P_N}(d\lambda) =& \frac{1}{\pi}\int_{I^*}\Im G_{P}(t+i(\eta^*)^{\alpha})\;dt\\
				+& \frac{1}{2(\eta^*)^{\alpha}}\int_{I^*} \frac{1}{\pi}\Im[G_{P_N}(t+i(\eta^*)^{\alpha})-G_{P}(t+i(\eta^*)^{\alpha})]\;dt.
			\end{align*}
			The first part converges to $\rho(x)$ because of the assumption that $(\mu_c, \mu_d, H)$ is regular at $x$. For the second term, it goes to zero in probability by Proposition \ref{Cauchy} because we assume both $\|\mu_{c_N}-\mu_c\|\to 0$ and $\|\mu_{d_N}-\mu_d\|\to 0$ as $N\to \infty$. 
			\end{proof}

\subsubsection{Delocalization of Eigenvectors}

Delocalization of eigenvectors was proved in \cite{ESY2009} for the general Wigner random matrices. Kargin \cite{Kargin2015} considered delocalization of different order.

\begin{definition}[\cite{Kargin2015}]
We say that the eigenvectors $v_1^{(N)},\ldots,v_{k_N}^{(N)}$ with corresponding eigenvalues in $I$ of a sequence of matrices are delocalized at length $N^{-\theta}$ in the interval $I$ if there exists $\delta>0$ such that 
$$\P\left(\max_{j=1,\cdots k_N}\max_{i=1,\ldots,N}|v_j^{(N)}(i)|>N^{-\theta}\log N\right)\leq \exp(-N^{-\delta})$$
for all sufficiently large $N$.
\end{definition}

Kargin proved the delocalization of eigenvectors for a sum of random matrices of the form $C_N + U_N^*D_NU_N$, using smoothness assumption. The next theorem shows that if we have the regularity condition for the $M_n(\C)$-valued matrices arised from linearization of polynomials, we can prove the delocalization of eigenvectors for polynomials $P(c_N, d_N)$ of two random matrices.

\begin{theorem} 
	Suppose that $(c_N, d_N)\to (c,d)$ strongly in distribution. Denote $H$ as the nonconstant part of the linearization $L$ of $P$. Suppose also that $(\mu_c, \mu_d, H)$ is regular on $(\Pi_{I, \kappa}\cdot e_{1,1}-\gamma_0, \R\cdot I_n)$, where $I$ is a compact interval on $\R$ and $\kappa>0$. Then, given any $\alpha\in(0,1)$, for all sufficiently large $N$, the eigenvectors of $P(c_N, d_N)$ are delocalized at length $N^{-\frac{\alpha}{12}}$ in $I$.
	\end{theorem}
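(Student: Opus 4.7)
The plan is to adapt Kargin's approach for $c_N+d_N$ \cite{Kargin2015} to the polynomial setting by routing through the linearization $L_N$. The starting point is the classical Ward-type identity: for any unit eigenvector $v_j^{(N)}$ of $P(c_N,d_N)$ with eigenvalue $\lambda_j$ and any $\eta>0$,
\[|v_j^{(N)}(i)|^2 \leq \eta\,\Im\bigl[(P(c_N,d_N)-\lambda_j-i\eta)^{-1}\bigr]_{ii}.\]
It therefore suffices to establish a uniform high-probability bound $|[(P(c_N,d_N)-z)^{-1}]_{ii}|\leq C$ for $i\in\{1,\ldots,N\}$ and $z\in I+i\eta_1$, where $\eta_1$ is chosen to match the target delocalization scale.

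To this end, I would use the linearization identity $[(P_N-z)^{-1}]_{ii}=[(ze_{1,1}\otimes I_N - L_N)^{-1}]_{((1,i),(1,i))}$. At the regularized scale $\beta=ze_{1,1}-\gamma_0+i\eta$ with $\eta=N^{-1/12}\sqrt{\log N}$ (the scale of Lemma \ref{Bound}), Theorem \ref{FinEst} shows that $\E R_N(\beta)$ is $o(1)$-close in operator norm to the block-diagonal matrix $(\omega_1^{(N)}(\beta)-\gamma_1\otimes c_N)^{-1}$, whose $(i,i)$-block is $(\omega_1^{(N)}(\beta)-c_{N,i}\gamma_1)^{-1}$. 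Regularity of $(\mu_c,\mu_d,H)$ makes these blocks uniformly norm-bounded, so every entry of $\E R_N(\beta)$ is $O(1)$ uniformly in $i$.

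For individual random entries, I would invoke Lipschitz concentration for the Haar measure on $U(N)$: each map $U_N\mapsto[R_N(\beta)]_{((1,i),(1,j))}$ is Lipschitz in the Hilbert--Schmidt metric with constant $\lesssim\|R_N(\beta)\|^2\|\gamma_2\|\|D_N\|$, giving sub-Gaussian concentration around the mean. A union bound over $i\in\{1,\ldots,N\}$ and over a sufficiently fine net of $z$-values in $I+i(0,\kappa]$ (using the $O(1/\eta^2)$-Lipschitz dependence on $z$) then delivers $|[R_N(\beta)]_{((1,i),(1,i))}|\leq C$ uniformly with probability at least $1-\exp(-cN^{\delta'})$ for some $\delta'>0$. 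To transfer this bound to the genuine resolvent $(P_N-z)^{-1}$, I would use the Schur-complement identification of the $(1,1)$-block of $R_N(\beta)$ as $((z+i\eta)I_N-P_N^{(\eta)})^{-1}$ with $\|P_N^{(\eta)}-P_N\|=O(\eta)$ (relying on boundedness of $Q^{-1}(c_N,d_N)$ from the linearization construction), combined with a resolvent-identity comparison at $\Im z=\eta_1\gg\eta$.

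Plugging the resulting bound $|[(P_N-z)^{-1}]_{ii}|\leq C'$ back into the Ward identity with $\eta_1=(N^{-1/12}\log N)^\alpha$ yields $|v_j^{(N)}(i)|^2 \leq C'\eta_1 \leq N^{-\alpha/12}\log N$ for $N$ large, and a final union bound over the at most $k_N\leq N$ eigenvalues in $I$ closes the argument. The main obstacle is the transfer step from $R_N(\beta)$ to $(P_N-z)^{-1}$: because $\|(P_N-z)^{-1}\|$ can be as large as $1/\eta_1$ when $z$ is near $\sigma(P_N)$, the Schur-complement comparison must be carried out carefully at the level of individual diagonal entries rather than operator norms, so that the $O(1)$ bound survives for every $\alpha\in(0,1)$; all other ingredients are already assembled in the preceding subsections.
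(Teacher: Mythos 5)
Your proposal follows essentially the same route as the paper's proof: bound the diagonal entries of $(z-P(c_N,d_N))^{-1}$ at $\Im z=\eta^\alpha$ by comparing with the regularized resolvent $R_N(ze_{1,1}-\gamma_0+i\eta)$, control $\E R_N(\beta)$ via Theorem \ref{FinEst}/Lemma \ref{Bound} and the regularity hypothesis, apply Haar concentration to the fluctuation of the diagonal entries, and feed the resulting $O(1)$ bound into the spectral decomposition $-\Im[(z-P(c_N,d_N))^{-1}]_{ii}=\sum_m \eta^\alpha|v_m(i)|^2/((x-\lambda_m)^2+\eta^{2\alpha})$ evaluated at $x=\lambda_m$. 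The transfer step you flag as the main obstacle is handled in the paper directly at the operator-norm level: the resolvent identity gives a correction $i\eta(ze_{1,1}-L_N)^{-1}(ze_{1,1}+i\eta-L_N)^{-1}$ whose norm is $O(\eta\cdot\eta^{-\alpha}\cdot 1)=O(\eta^{1-\alpha})$ by Lemma \ref{Linv} and Lemma \ref{Bound}, so the $i\eta$ prefactor absorbs the $\eta^{-\alpha}$ blowup for every $\alpha\in(0,1)$ and no entry-level Schur-complement analysis is required.
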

\begin{proof}
	It suffices to show that, given any $v_m^{(N)}$ of $P(c_N, d_N)$,
	$$\P(|v_m^{(N)}(i)|^2>M' N^{\frac{-\alpha}{12}}\log N)\leq 2\exp\left(-K'N\varepsilon^2\right)$$
	for some positive constant $K'$ because $k_N\leq N$; we are taking the intersection of at most $N^2$ events of the form $\{|v_m(k)|^2>M' N^{\frac{-\alpha}{12}}\log N\}$ in the probability in the statement of delocalization of eigenvectors.
	
	Let $\eta = N^{-\frac{1}{12}}(\log N)^{1/\alpha}$. We write $\beta = ze_{1,1}-\gamma_0+i\eta$ for $z\in \Pi_{I,\kappa}$. By Proposition \ref{Impart}, $\omega_{1}^{(N)}(\beta)\in \bbH^+(M_n(\C))$ when $N$ is large.
	
	Again, as in \cite[Proof of Proposition 8.4]{BBC2018}, the concentration inequality implies there is a constant $K>0$ such that for all $i=1,2,\ldots n$,
	$$\P(|(R_N(\beta)-\E R_N(\beta))_{i,i}|>\varepsilon)\leq 2\exp\left(-KN\varepsilon^2\right)$$
	for all $\varepsilon>0$. The $(1,1)$-block of $R_N(z e_{1,1}-\gamma_0)$ is the resolvent $(z-P(c_N, d_N))^{-1}$; the $(i,i)$-entry of $R_N(ze_{1,1}-\gamma_0)$ is the $(i,i)$-entry of the resolvent  $(z-P(c_N, d_N))^{-1}$.
	
	Now, let $z=x+i\eta^\alpha$ where $x\in I$ and $\alpha\in(0,1)$. We have
	\begin{align*}
	 (ze_{1,1}-L)^{-1} =& (ze_{1,1}-L)^{-1}-(ze_{1,1}+i\eta-L)^{-1} + [R(\beta)-\E(R(\beta))]+\E[R(\beta)]\\
	 =&\: i\eta(ze_{1,1}-\gamma_0-\gamma_1\tensor c_N-\gamma_2\tensor d_N)^{-1}(ze_{1,1}-\gamma_0+i\eta-\gamma_1\tensor c_N-\gamma_2\tensor d_N)^{-1}\\
	 &+[R_N(\beta)-\E R_N(\beta)]+\E R_N(\beta).
	\end{align*}
	It follows that, by applying Lemma \ref{Linv} to the first term, and by Lemma \ref{Bound}, there is an $M>0$ such that, with probability at least $2\exp\left(-K'N\varepsilon^2\right)$ for some $K'>0$,
	\begin{equation}
	\label{Resolventk}
	|[(z-P(c_N, d_N))^{-1}]_{i,i}|\leq M\eta^{1-\alpha} + |[R_N(\beta)-\E R_N(\beta)]_{k,k}|+C\leq M\eta^{1-\alpha}+\varepsilon+C.
	\end{equation}

	Let $v_m^{(N)}$, $m=1,\ldots, N$, be the eigenvectors of $P(c_N, d_N)$, with corresponding eigenvalues $\lambda_m$. Since
	$$(z-P(c_N, d_N))^{-1} = \sum_{m=1}^N \frac{v_m v_m^*}{z-\lambda_m},$$
	and so
	$$-\Im[(z-P(c_N, d_N))^{-1}]_{i,i} = \sum_{m=1}^N \frac{\eta^\alpha |v_m(i)|^2}{(x-\lambda_m)^2+\eta^{2\alpha}}$$
	where $v_m(i)$ means the $i$th-component of $v_m$. Taking $x=\lambda_m$ for a particular $\lambda_m\in I$ and using the inequality \eqref{Resolventk} give
	\begin{align*}
	|v_m(i)|^2\leq& -\eta^\alpha\Im[(z-P(c_N, d_N))^{-1}]_{k,k}\\
	\leq &M\eta+  \eta^\alpha|[R_N(\beta)-\E R_N(\beta)]_{k,k}|+C\eta^\alpha\\
	\leq & (M+\varepsilon+C)\eta^\alpha = (M+\varepsilon+C)N^{\frac{-\alpha}{12}}\log N
	\end{align*}
	since $\eta<1$ when $N$ large. 
	It follows that
	$$\P(|v_m(i)|^2>M' N^{\frac{-\alpha}{12}}\log N)\leq 2\exp\left(-K'N\varepsilon^2\right).$$
	This concludes the proof.
	\end{proof}

\section{Acknowledgement}
The author would like to thank Hari Bercovici for useful conversations and discussions; the author would also like to thank his careful reading and valuable advice that improves the manuscript. The author learned the use of linearization from conversations with Tobias Mai and Roland Speicher, when he visited Germany, organized and funded by Todd Kemp and Roland Speicher.

The author would also like to thank the anonymous referee providing useful comments and suggestions to improve the manuscript.


\bibliographystyle{acm}
\bibliography{LocalLimitandDel}

\end{document}